\newtheorem{theorem}{Theorem}[section]
\newtheorem{lemma}[theorem]{Lemma}
\newtheorem{proposition}[theorem]{Proposition}
\newtheorem{corollary}[theorem]{Corollary}
\theoremstyle{definition}
\newtheorem{definition}[theorem]{Definition}
\theoremstyle{remark}
\newtheorem{remark}[theorem]{Remark}
\numberwithin{equation}{section}
\newcommand{\abs}[1]{\lvert#1\rvert}
\begin{document}

\title{$A_{n}$-type surface singularity and nondisplaceable Lagrangian tori}

\author{Yuhan Sun}
\address{Department of Mathematics, Stony Brook University, Stony Brook, New York, 11794}

\email{sun.yuhan@stonybrook.edu}

\begin{abstract}
We prove the existence of a one-parameter family of nondisplaceable Lagrangian tori near a linear chain of Lagrangian 2-spheres in a symplectic 4-manifold. When the symplectic structure is rational we prove that the deformed Floer cohomology groups of these tori are nontrivial. The proof uses the idea of toric degeneration to analyze the full potential functions with bulk deformations of these tori.
\end{abstract}

\maketitle


\tableofcontents

\section{Introduction}
A compact submanifold $L$ of a connected symplectic manifold $\left(X, \omega\right)$ is called nondisplaceable if it cannot be separated from itself by any Hamiltonian diffeomorphism. That is,
$$
L\cap \phi\left(L\right) \neq \emptyset, \quad \forall \phi\in Ham\left(X, \omega\right).
$$
When $L$ is a Lagrangian submanifold the nondisplaceability of $L$ is related with the Lagrangian intersection Floer theory. In particular when $X$ is a symplectic toric manifold/orbifold and $L$ is a smooth torus orbit, the problem of nondisplaceability has been studied in \cite{ABM}, \cite{B}, \cite{CO}, \cite{CP}, \cite{FOOO1, FOOO2, FOOO3}, \cite{WW}, \cite{W}, \cite{WU} by either combinatorial or geometrical methods. One of these approaches is that Fukaya-Oh-Ohta-Ono established a critical point theory to reduce the calculation of the Floer cohomology of a toric fiber to its potential function with bulk deformations. Later in \cite{FOOO3} they combined this theory with the idea of toric degeneration on a Hirzebruch surface $F_{2}$ to get a one-parameter family of nondisplaceable Lagrangian tori in $S^{2}\times S^{2}$, which are not toric fibers. In this note we generalize the similar idea to find nondisplaceable Lagrangian tori in a closed symplectic 4-manifold which contains a chain of Lagrangian 2-spheres.

\subsection{Main results}
Let $M_{n, \epsilon, R}$ be the $A_{n}$-type Milnor fiber, that is,
$$
M_{n, \epsilon, R}=\lbrace z=(z_{1}, z_{2}, z_{3})\mid z_{1}^{2}+z_{2}^{2}+z_{3}^{n+1}=\epsilon, \abs{z}\leq R \rbrace\subset \mathbb{C}^{3}
$$
where $\epsilon$ is a complex parameter and $R>0$ is a real parameter. We omit $R$ in the notation when there is no confusion. It is known that there is a one-parameter family of Lagrangian tori $L_{n, \lambda}$ with nontrivial Floer cohomology in $M_{n, \epsilon}$, which we will describe in section 4. All of these Lagrangian tori are monotone with minimal Maslov number 2 and monotonicity constant $\lambda$, where $\lambda$ is parameterized by an open interval in $\mathbb{R}$.

Symplectically the Milnor fiber $M_{n, \epsilon}$ can be regarded as a linear plumbing of the disk cotangent bundles of 2-spheres, with a rearrangement on the boundary. Let $X$ be a symplectic 4-manifold which contains a chain of Lagrangian 2-spheres. Then there exists a small neighborhood of this chain which is symplectomorphic to certain $M_{n, \epsilon}$. Our main theorem can be stated as follows.

\begin{theorem}
Let $X$ be a closed symplectic 4-manifold which contains a linear chain of Lagrangian 2-spheres. Consider the Lagrangian embedding
$$
L_{n, \lambda}\hookrightarrow M_{n, \epsilon}\subset X
$$
then $L_{n, \lambda}$ is nondisplaceable in $X$ for all $\lambda$ in a possibly smaller open interval.
\end{theorem}

In particular when $n=1$ the Milnor fiber is the disk cotangent bundle of a 2-sphere. Hence we have the following corollary.

\begin{corollary}
Let $X$ be a closed symplectic 4-manifold which contains a Lagrangian 2-sphere. Then there is a one-parameter family of nondisplaceable Lagrangian tori in $X$ near this Lagrangian sphere.
\end{corollary}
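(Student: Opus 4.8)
The plan is simply to specialize the main theorem (Theorem~1.1) to the case $n=1$ and read off the statement. As already noted in the excerpt, for $n=1$ the Milnor fiber $M_{1,\epsilon,R}$ is symplectomorphic to a disk cotangent bundle $D^{*}_{r}S^{2}$ of a $2$-sphere, with the zero section corresponding to the vanishing cycle and $r$ a monotone function of $R$ (and of $\abs{\epsilon}$). A ``linear chain of Lagrangian $2$-spheres'' of length one is nothing but a single Lagrangian $2$-sphere, so the hypothesis of Theorem~1.1 is met as soon as $X$ contains one such sphere.

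First I would invoke the Weinstein Lagrangian neighborhood theorem: given a Lagrangian $2$-sphere $S\subset X$, there is some $r_{0}>0$ and a symplectic embedding $(D^{*}_{r_{0}}S^{2},\omega_{\mathrm{can}})\hookrightarrow X$ carrying the zero section onto $S$. Next I would choose the parameters $\epsilon,R$ so that the corresponding radius $r$ satisfies $r<r_{0}$; this yields a symplectic embedding $M_{1,\epsilon,R}\hookrightarrow X$ whose core is $S$, which is precisely the situation $L_{n,\lambda}\hookrightarrow M_{n,\epsilon}\subset X$ considered in Theorem~1.1 with $n=1$. Applying Theorem~1.1 then produces, for every $\lambda$ in a nonempty open interval, a Lagrangian torus $L_{1,\lambda}$ that is nondisplaceable in $X$. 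Since each $L_{1,\lambda}$ is constructed inside $M_{1,\epsilon,R}$, it is contained in the chosen Weinstein neighborhood of $S$; letting $r_{0}$ (hence $R$, hence the neighborhood) be small forces these tori into an arbitrarily small neighborhood of $S$, which is the meaning of ``near this Lagrangian sphere''.

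The only point that deserves care, and which I expect to be the single mild obstacle, is the bookkeeping of parameters: the radius $r_{0}$ of the available Weinstein neighborhood is fixed by $X$, whereas the monotonicity constant $\lambda$ of $L_{1,\lambda}$ is tied to $R$ (equivalently to $r$), so one must verify that the interval of admissible $\lambda$ supplied by Theorem~1.1 is still nonempty after imposing $r<r_{0}$. This will follow from the fact that the construction of the family $L_{n,\lambda}$ in section~4 is compatible with rescaling the symplectic form: shrinking $R$ merely rescales the interval of $\lambda$ and does not collapse it. Everything else is a direct quotation of Theorem~1.1, so no additional argument is required.
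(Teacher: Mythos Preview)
Your reduction to Theorem~1.1 with $n=1$ is logically sound and is exactly the route the introduction advertises. The Weinstein neighborhood step and the parameter bookkeeping are fine; rescaling does preserve a nonempty interval of $\lambda$.

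That said, the paper does \emph{not} prove this statement by specializing Theorem~1.1. In Section~4 the order is reversed: Corollary~1.2 is proved directly as Theorem~4.6, and only afterwards is the general chain case (Theorem~1.1) established as Theorem~4.7, whose proof explicitly assumes $n\geq 2$. So quoting Theorem~1.1 as a black box is slightly circular in the paper's internal logic: the $n=1$ case of Theorem~1.1 \emph{is} Theorem~4.6.

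The direct proof in Theorem~4.6 is also methodologically different from the general one. For $n=1$ the bulk deformation is not built from the Lagrangian sphere class (which would give only one parameter, since $D_1=D_n$), but from the Poincar\'e duals of two cotangent fibers $S_s$ and $S_n$ at the south and north poles. One then writes the full potential as
\[
(e^{v}y_1 + (1+e^{v+w})y_2 + e^{w}y_1^{-1}y_2^{2})T^{\lambda} + P(y_1,y_2;v,w)T^{\mu},\quad \lambda<\mu,
\]
sets $y_1=y_2=1$, checks that the leading-order critical point equations have an isolated solution $(e^v,e^w)=(-1,-1)$, and invokes Lemma~5.1 to handle the higher-energy tail (exactly for rational $\omega$, modulo large energy otherwise). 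This concrete computation is what the general $n\geq 2$ argument in Theorem~4.7 is modeled on.

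What your approach buys is brevity once Theorem~1.1 is in hand; what the paper's approach buys is an independent, more explicit $n=1$ computation that motivates and anchors the general proof. If you want your write-up to match the paper, you should replace the specialization argument with the direct computation above.
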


Note that for $n<n'$ we have an inclusion from $M_{n, \epsilon}$ to $M_{n', \epsilon}$ after slightly shrinking the domain so we can compare those tori $L_{n, \lambda}$ and $L_{n', \lambda}$. We remark that they are different as they bound different numbers of holomorphic disks($L_{n, \lambda}$ bounds $2^{n+1}$ classes of holomorphic disks). Hence there may exist lots of different families of nondisplaceable Lagrangian tori in $X$, depending on the length of the chain of Lagrangian 2-spheres. On the question how those different tori intersect with each other, we suggest the work of Tonkonog-Vianna in \cite{TV}.

In Theorem 1.1 we do not require that the chain is maximal. Hence it implies the existence of families of nondisplaceable Lagrangian tori in many interesting settings. For example, the smoothing of a symplectic orbifold with $ADE$-type singularities which contains $ADE$-chains of Lagrangian spheres, the compactification of Milnor fibers of many hypersurface singularities like $K3$ surfaces from 14 exceptional singularities.

Although locally the Floer cohomology of $L_{n, \lambda}$ is nontrivial in $M_{n, \epsilon}$ we do not know whether it is nontrivial in $X$. The proof of Theorem 1.1 is based on that the deformed Floer cohomology of $L_{n, \lambda}$ in $X$ is nontrivial modulo any large energy parameter. When $X$ is equipped with a rational symplectic structure we have the following positive answer.

\begin{theorem}
Let $X$ be a closed symplectic 4-manifold with a rational symplectic structure which contains a chain of Lagrangian 2-spheres. Consider the Lagrangian embedding
$$
L_{n, \lambda}\hookrightarrow M_{n, \epsilon}\subset X
$$
then for any $L_{n, \lambda}$ there exists some bulk deformation $\mathfrak{b}(\lambda)$ such that $L_{n, \lambda}$ has nontrivial deformed Floer cohomology with respect to $\mathfrak{b}(\lambda)$ in $X$. Here $\lambda$ is parameterized by an open interval.
\end{theorem}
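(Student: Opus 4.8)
The plan is to adapt to the present setting the argument of Fukaya--Oh--Ohta--Ono \cite{FOOO3} for $S^{2}\times S^{2}$, replacing the toric degeneration through the Hirzebruch surface $F_{2}$ by one adapted to the $A_{n}$-chain. First I would fix a small neighborhood $\nu$ of the Lagrangian chain symplectomorphic to $M_{n,\epsilon}$ and construct, in the symplectic sense used in \cite{FOOO3}, a toric degeneration whose general fibre is $X$ and whose central fibre replaces $\nu$ by the affine toric $A_{n}$-surface $w_{1}w_{2}=z^{n+1}$ --- the singular fibre $\{\epsilon=0\}$ of the Milnor fibration, after the substitution $w_{1,2}=z_{1}\pm iz_{2}$. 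Under this degeneration each torus $L_{n,\lambda}$ of section 4 becomes, up to Hamiltonian isotopy, a fibre of a Lagrangian torus fibration on the central object, which brings it into the range of Fukaya--Oh--Ohta--Ono's critical point theory for potential functions with bulk deformation and, at the same time, organizes the holomorphic disks bounded by $L_{n,\lambda}$ in $X$ into the $2^{n+1}$ \emph{local} classes already analyzed inside $M_{n,\epsilon}$ together with \emph{correction} classes that must leave $\nu$ and hence carry symplectic area bounded below by a definite constant $E_{0}>0$.

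Second, I would compute the bulk-deformed potential function $\mathfrak{PO}^{\mathfrak{b}}_{X}(L_{n,\lambda})$, where $\mathfrak{b}$ runs over bulk classes supported on the toric divisors of the central fibre (equivalently, on a divisor of $X$ meeting $\nu$). By the divisor axiom and the above splitting of disk classes it has the shape
\[
\mathfrak{PO}^{\mathfrak{b}}_{X}(L_{n,\lambda}) = \mathfrak{PO}^{\mathfrak{b}}_{M_{n,\epsilon}}(L_{n,\lambda}) + \mathfrak{R}^{\mathfrak{b}}_{\geq E_{0}},
\]
where the first summand is the potential computed inside the Milnor fibre --- which, by monotonicity and the nontriviality of the Floer cohomology of $L_{n,\lambda}$ in $M_{n,\epsilon}$ recorded in section 4, already admits a critical point $y_{0}(\lambda)$ over the Novikov field $\Lambda$ --- and $\mathfrak{R}^{\mathfrak{b}}_{\geq E_{0}}$ collects all the remaining contributions, which have valuation $\geq E_{0}$.

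Third, I would solve the critical point equation for $\mathfrak{PO}^{\mathfrak{b}}_{X}(L_{n,\lambda})$ over $\Lambda$ by induction on energy. Starting from $(\mathfrak{b},y)=(0,y_{0}(\lambda))$, at each step one cancels the lowest-valuation error term by adjusting finitely many bulk parameters and then correcting $y$; this is the mechanism behind Theorem 1.1, by which the deformed Floer cohomology is nontrivial modulo any prescribed energy, and it produces bulk classes $\mathfrak{b}_{E}(\lambda)$ and approximate critical points $y_{E}(\lambda)$ for every energy $E$. The rationality hypothesis enters precisely here: since $[\omega]$ is rational, all disk and sphere areas lie in $\rho\mathbb{Z}$ for some $\rho>0$, so the induction gains at least $\rho$ in valuation at each step, the sequences $\mathfrak{b}_{E}(\lambda)$ and $y_{E}(\lambda)$ converge in the Novikov topology, and their limits $\mathfrak{b}(\lambda)$ and $y(\lambda)$ form a genuine bulk class and a genuine critical point over $\Lambda$. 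By the Fukaya--Oh--Ohta--Ono criterion this makes the $\mathfrak{b}(\lambda)$-deformed Floer cohomology of $L_{n,\lambda}$ (for a suitable weak bounding cochain) nontrivial in $X$, and since all of this data can be chosen to vary continuously with $\lambda$ the conclusion holds on an open interval.

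The step I expect to be the main obstacle is the control of the remainder $\mathfrak{R}^{\mathfrak{b}}_{\geq E_{0}}$: one must show that the correction disks escaping $\nu$ can be neutralized using only bulk deformations supported near the chain --- that is, that the bulk parameters supplied by the toric model suffice to cancel the higher-energy terms order by order without disturbing the leading critical point $y_{0}(\lambda)$. Carrying this out, together with the transversality and Gromov compactness input needed to make the potential function and its bulk deformation well defined in the non-toric manifold $X$, is the technical heart of the argument; rationality then serves to upgrade the resulting ``all finite orders'' statement to honest nonvanishing over the Novikov field.
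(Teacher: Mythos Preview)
Your overall architecture is right --- split the potential into the local Milnor-fibre contribution plus a high-energy remainder, use bulk deformations to handle the remainder, and invoke rationality to make an order-by-order scheme converge --- but the specific induction you propose does not get off the ground. You want to start from a critical point $y_{0}(\lambda)$ of the local potential $\mathfrak{PO}_{M_{n,\epsilon}}$ and then correct $(\mathfrak{b},y)$ energy level by energy level. The difficulty is that the local critical locus is \emph{not} isolated in the $y$-variables: as the paper computes in section~4.1, the leading potential is $\sum_{k}\binom{n+1}{k}y_{1}^{1-k}y_{2}^{k}T^{\lambda}=y_{1}(1+y_{1}^{-1}y_{2})^{n+1}T^{\lambda}$ and its critical set is the whole curve $y_{1}^{-1}y_{2}=-1$. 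So there is no isolated zero of the leading Jacobian in $(y_{1},y_{2})$ to feed into a Hensel/Newton scheme, and an induction that ``corrects $y$'' at each step has nothing nondegenerate to anchor to. This is exactly the obstacle you flag at the end, but it is not a technicality about the remainder $\mathfrak{R}_{\geq E_{0}}$; it already blocks the very first step.

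The paper resolves this by \emph{exchanging the roles of unknowns and parameters}: one fixes $y_{1},y_{2}$ at generic complex values and treats the bulk exponentials $e^{v},e^{w}$ (with $\mathfrak{b}=v\,PD(D_{1})+w\,PD(D_{n})$, or fibre disks when $n=1$) as the unknowns. The leading critical-point equations then become two polynomial equations of bidegree $(2,2)$ in $(e^{v},e^{w})$, and a direct resultant computation (Proposition~4.8) shows that for generic $y$ they have a nonzero \emph{isolated} solution $(e^{v},e^{w})\in(\mathbb{C}^{\ast})^{2}$. Only now can one invoke the algebraic perturbation lemma (Lemma~5.1, the analogue of \cite[Theorem~10.4]{FOOO1}): rationality of $\omega$ forces the energy spectrum into a finitely generated semigroup, so the isolated complex solution lifts to a solution over $\Lambda_{0}\setminus\Lambda_{+}$ of the full critical-point system, yielding the desired bulk class $\mathfrak{b}(\lambda)$. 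In short, the missing idea in your proposal is this role reversal; once you have it, the rationality step is exactly as you describe.
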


One concrete application is that we find new families of Lagrangian tori with nontrivial deformed Floer cohomology in the $k$-points blow up of the complex projective plane where $k=2,3,4,5$.

\begin{theorem}
Let $X$ be a closed semi-Fano toric surface and $\hat{X}$ be the smoothing of its toric degeneration.
\begin{enumerate}
\item There exist one-parameter families of Lagrangian tori with nontrivial Floer cohomology with bulk deformations in $\hat{X}$.
\item For some chosen bulk deformation $\mathfrak{b}$ all the critical points of the potential function are Morse and the deformed Kodaira-Spencer map induces an isomorphism between the deformed quantum cohomology and the Jacobian ring of the potential function
$$
QH_{\mathfrak{b}}(\hat{X}; \Lambda) \xrightarrow{\mathfrak{ks}_{\mathfrak{b}}} Jac(\mathfrak{PO}_{\mathfrak{b}}; \Lambda).
$$
\item There exist one-parameter families of linearly independent quasi-morphisms $\mu^{u}_{\mathfrak{b}(u)}$ on the universal cover of the Hamiltonian diffeomorphism group of $\hat{X}$.
\end{enumerate}
\end{theorem}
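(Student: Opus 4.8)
\emph{Proof proposal.} The plan is to run the toric Floer machinery of Fukaya--Oh--Ohta--Ono on the semi-Fano toric surface $X$ and transport its conclusions to $\hat{X}$ through the toric degeneration, exactly as $S^{2}\times S^{2}$ is handled via the Hirzebruch surface $F_{2}$ in \cite{FOOO3}, now feeding in the chain-of-spheres results of Theorems 1.1 and 1.3. First I would realize $\hat{X}$ as the Milnor smoothing of a toric degeneration whose central fibre $X_{0}$ is obtained from $X$ by contracting the exceptional chain; thus $X$ is the minimal toric resolution of the $A_{n}$ (more generally $ADE$) singularities of $X_{0}$, the chain of Lagrangian $2$-spheres in $\hat{X}$ is the collection of vanishing cycles, and a neighbourhood of that chain is symplectomorphic to $M_{n,\epsilon}$ and contains the tori $L_{n,\lambda}$ constructed earlier. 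Equipping $\hat{X}$ with a rational symplectic form compatible with the degeneration, part (1) is immediate from Theorem 1.3.

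For part (2) the core is a degeneration correspondence for bulk-deformed disk counts. A bulk class $\mathfrak{b}$ assembled from the toric divisors of $X$ induces a bulk class on $\hat{X}$ supported near the Lagrangian spheres, and the $\mathfrak{b}$-deformed potential $\mathfrak{PO}_{\mathfrak{b}}$ of the family of almost-toric Lagrangian fibres of $\hat{X}$ is obtained, after an explicit change of variables coming from the degeneration (a cluster-type transformation across each node), from the $\mathfrak{b}$-deformed potential $\mathfrak{PO}_{\mathfrak{b}}^{X}$ of the toric fibres of $X$. Because $X$ is semi-Fano, $\mathfrak{PO}_{\mathfrak{b}}^{X}$ is an explicit Laurent polynomial (see \cite{CO, FOOO1, FOOO2}), so $\mathfrak{b}$ can be chosen generically to make $\mathfrak{PO}_{\mathfrak{b}}$ a Morse function with nondegenerate critical points and $Jac(\mathfrak{PO}_{\mathfrak{b}}; \Lambda)$ a finite product of fields $\prod_{k}\Lambda_{k}$. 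Meanwhile the genus-zero Gromov--Witten invariants of $\hat{X}$, hence the quantum product on $QH_{\mathfrak{b}}(\hat{X}; \Lambda)$, are computed via the same degeneration from the toric data of $X$; extending the Kodaira--Spencer map to the almost-toric setting and checking, once more via the degeneration, that it is modelled on Fukaya--Oh--Ohta--Ono's toric isomorphism $\mathfrak{ks}_{\mathfrak{b}}\colon QH_{\mathfrak{b}}(X; \Lambda)\xrightarrow{\ \cong\ }Jac(\mathfrak{PO}_{\mathfrak{b}}^{X}; \Lambda)$ then yields the isomorphism $\mathfrak{ks}_{\mathfrak{b}}\colon QH_{\mathfrak{b}}(\hat{X}; \Lambda)\xrightarrow{\ \cong\ }Jac(\mathfrak{PO}_{\mathfrak{b}}; \Lambda)$ of (2).

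For part (3) one feeds the semisimple algebra $QH_{\mathfrak{b}}(\hat{X}; \Lambda)\cong Jac(\mathfrak{PO}_{\mathfrak{b}}; \Lambda)\cong\prod_{k}\Lambda_{k}$ into the Entov--Polterovich and Fukaya--Oh--Ohta--Ono theory of spectral invariants: each field idempotent produces a homogeneous quasi-morphism on the universal cover $\widetilde{Ham}(\hat{X})$ together with a partial symplectic quasi-state. For a parameter value $u$, take the bulk $\mathfrak{b}(u)$ of Theorem 1.3, under which $L_{n,u}$ is a nondegenerate critical point of $\mathfrak{PO}_{\mathfrak{b}(u)}$ detected by an idempotent $e(u)$, and let $\mu^{u}_{\mathfrak{b}(u)}$ be the associated quasi-morphism, for which $L_{n,u}$ is heavy (and superheavy, by the argument of \cite{FOOO3}). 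Since the tori $L_{n,u}$ are pairwise disjoint, hence pairwise Hamiltonian displaceable, a Hamiltonian supported near $L_{n,u_{j}}$ is detected by $\mu^{u_{j}}_{\mathfrak{b}(u_{j})}$ but annihilated by $\mu^{u_{i}}_{\mathfrak{b}(u_{i})}$ for $i\neq j$, which forces every finite subfamily, and hence the whole one-parameter family, to be linearly independent.

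The main obstacle is the degeneration correspondence behind (2): one must show that the bulk-deformed $A_{\infty}$-structure, the operators $\mathfrak{q}$, the potential function and the Kodaira--Spencer map of the almost-toric Lagrangian fibres of $\hat{X}$ are genuinely computed by the toric data of $X$. This requires a controlled degeneration analysis of holomorphic disks along the family from $X$ through $X_{0}$ to $\hat{X}$ --- accounting for sphere and Maslov-zero disk bubbling in the singular limit and matching the moduli spaces on the two sides --- and is the technical heart of the paper. A secondary but still delicate point is to produce a single bulk class $\mathfrak{b}$, respectively a family $\mathfrak{b}(u)$, that simultaneously makes $\mathfrak{PO}_{\mathfrak{b}}$ Morse with semisimple Jacobian ring and renders the designated tori critical; here the explicitness afforded by the semi-Fano hypothesis is indispensable.
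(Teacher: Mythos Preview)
Your high-level picture---compute on the toric $X$, then transport to $\hat{X}$---is the same as the paper's, but the mechanism you propose for the transport, and your strategy for part (2), are substantially different from what the paper actually does, and your version has a real gap.

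The paper does \emph{not} relate $\mathfrak{PO}^{X}_{\mathfrak{b}}$ to $\mathfrak{PO}_{\mathfrak{b}}$ by a cluster-type change of variables across nodes. Instead it builds a one-parameter family of symplectic forms $\omega_t$ on $X$ (via Kronheimer's hyperK\"ahler ALE metrics on a neighbourhood of the $-2$-chain) that interpolates between the toric form and the smoothing, and proves directly (Proposition~3.7) that the one-point open Gromov--Witten numbers $n_\beta$ are invariant along this family by an energy/cobordism argument that rules out Maslov-zero disk bubbles. The upshot is that the potential of $L(u)$ in $\hat{X}$ is literally the Chan--Lau potential \cite{CL} of the semi-Fano $X$ with the $-2$-curve energies $\alpha_i$ set to $0$; no wall-crossing transformation is needed. (Incidentally, the explicit semi-Fano potential comes from \cite{CL}, not from \cite{CO,FOOO1,FOOO2}, which only cover the Fano case.)

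The more serious issue is your argument for part (2). You propose to obtain the Kodaira--Spencer isomorphism for $\hat{X}$ by ``checking, via the degeneration, that it is modelled on'' the FOOO toric isomorphism for $X$. But $\hat{X}$ is not toric, and there is no off-the-shelf statement that transports $\mathfrak{ks}_{\mathfrak{b}}$ through a degeneration; proving such a thing would itself be the hard content. The paper avoids this entirely: once the potential on $\hat{X}$ is known explicitly, it proceeds case by case (Section~3.4 for $X_2$, Appendix~2 for $X_3,\dots,X_{10}$), chooses a concrete bulk $\mathfrak{b}$ with specific numerical values of $e^{a_i}$, checks (sometimes with Mathematica) that the potential has exactly $\dim H^*(\hat{X})$ nondegenerate critical points, writes down the images $\mathfrak{ks}_{\mathfrak{b}}(PD[D_i])$ explicitly, and verifies they span. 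This is less elegant than what you sketch, but it is a complete argument, whereas your transport step is not. Your proof of part (3) is in the same spirit as the paper's (which follows the scheme of \cite{FOOO5}, Lemma~24.13), though the paper's construction of the family $\mathfrak{b}(u)$ is again by an explicit analysis of how the critical points of the polynomial potential move as the bulk parameter varies, rather than by invoking Theorem~1.3.
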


The smoothing $\hat{X}$ is not toric and we obtain the potential function by combining the computation of Chan-Lau in \cite{CL} with a local deformation. There are 11 types of semi-Fano toric surfaces which are classified by their moment polytopes. The proof is a case by case computation.

\begin{remark}
\begin{enumerate}
\item The cases of $S^{2}\times S^{2}$ and the monotone cubic surface have been done in \cite{FOOO3} and \cite{FOOO5}. Also the case of $X_{5}$ has been worked out by Vianna in \cite{V3}.
\item In \cite{FOOO2} a one-parameter family of nondisplaceable Lagrangian tori has been found in each of the k-points blow up of $\mathbb{C}P^{2}$ with $k\geq 2$. When $k=2$ our family of tori is different from that in \cite{FOOO2} since our tori bound 6 families of holomorphic disks but those in \cite{FOOO2} bound 5 families. When $k\geq 3$ the k-points blow up of $\mathbb{C}P^{2}$ in \cite{FOOO2} is not monotone and we mostly focus on the monotone case.
\end{enumerate}
\end{remark}

The outline of this note is the following. In section 2 we give brief preliminaries on background and in section 3 we use the example of a semi-Fano toric surface to prove the deformation invariance of one-point open Gromov-Witten invariants under local deformation. In section 4 we prove the existence of critical points of the potential function of $L_{n, \lambda}$ in a general symplectic 4-manifold $X$, which completes Theorem 1.1 and Theorem 1.3. In the appendix we provide a useful algebraic lemma and list explicit results about the existence and nondegeneracy of critical points in the degeneration of other semi-Fano toric surfaces.

\subsection*{Acknowledgements}
The author acknowledges his advisor Kenji Fukaya for suggesting this project and frequent enlightening guidance during these years, as well as the financial support in 2017-2018 academic year. The author acknowledges Gao Chen, Yuan Gao and Mark McLean for many helpful discussions on various topics. The author acknowledges Kwokwai Chan, Siu-Cheong Lau, Dmitry Tonkonog, Renato Vianna and Weiwei Wu for kindly explaining their related work. Especially the author acknowledges Jason Starr for explanations on many algebraic lemmas.

\section{Preliminaries}
We give a very brief summary to the theory of deformed Floer cohomology and potential functions, referring to Section 2 and Appendix 1 in \cite{FOOO3} for more details.

First we specify the ring and field that will be used. The Novikov ring $\Lambda_{0}$ and its field $\Lambda$ of fractions are defined by
$$
\Lambda_{0}=\lbrace \sum_{i=0}^{\infty}a_{i}T^{\lambda_{i}}\mid a_{i}\in \mathbb{C}, \lambda_{i}\in\mathbb{R}_{\geq 0}, \lambda_{i}<\lambda_{i+1}, \lim_{i\rightarrow \infty}\lambda_{i}=+\infty \rbrace
$$
and
$$
\Lambda=\lbrace \sum_{i=0}^{\infty}a_{i}T^{\lambda_{i}}\mid a_{i}\in \mathbb{C}, \lambda_{i}\in\mathbb{R}, \lambda_{i}<\lambda_{i+1}, \lim_{i\rightarrow \infty}\lambda_{i}=+\infty \rbrace
$$
where $T$ is a formal variable. The maximal ideal of $\Lambda_{0}$ is defined by
$$
\Lambda_{+}=\lbrace \sum_{i=0}^{\infty}a_{i}T^{\lambda_{i}}\mid a_{i}\in \mathbb{C}, \lambda_{i}\in\mathbb{R}_{>0}, \lambda_{i}<\lambda_{i+1}, \lim_{i\rightarrow \infty}\lambda_{i}=+\infty \rbrace.
$$
We remark that the field $\Lambda$ is algebraically closed since the ground field is $\mathbb{C}$, see Appendix A in \cite{FOOO1}. All the nonzero elements in $\Lambda_{0}-\Lambda_{+}$ are units in $\Lambda_{0}$. Next we define a valuation $v$ on $\Lambda$ by
$$
v(\sum_{i=0}^{\infty}a_{i}T^{\lambda_{i}})=\inf \lbrace \lambda_{i}\mid a_{i}\neq 0\rbrace, \quad v(0)=+\infty.
$$
This valuation gives us a non-Archimedean norm
$$
\abs{a=\sum_{i=0}^{\infty}a_{i}T^{\lambda_{i}}}=e^{-v(a)}.
$$

Let $X$ be a smooth symplectic 4-manifold and $L$ be a Lagrangian torus in $X$. Fukaya-Oh-Ohta-Ono defined the deformed Floer cohomology
$$
HF(L; \mathfrak{b}, b), \quad \mathfrak{b}\in H^{2}(X; \Lambda_{0}), \quad b\in \widehat{\mathcal{M}}_{weak}\left(L, \mathfrak{m}^{\mathfrak{b}}\right)\subset H^{1}\left(L; \Lambda_{0}\right)
$$
where $\mathfrak{b}$ is a bulk deformation and $b$ is a weak bounding cochain, see Section 2 in \cite{FOOO3} for precise definitions. To compute this deformed Floer cohomology we introduce the following notion of a potential function of a Lagrangian torus.

Let $\beta\in H_{2}(X, L; \mathbb{Z})$ be a relative homology class of Maslov index 2. Let $\mathcal{M}_{1}(\beta, J)$ be the compactified moduli space of $J$-holomorphic disks of class $\beta$ with boundary on $L$ and one boundary marked point. When $J$ is generic, it has been shown that $\mathcal{M}_{1}(\beta, J)$ carries a rational fundamental cycle of real dimension 2, see Condition 6.1 in\cite{FOOO3}. The one-point open Gromov-Witten invariant is defined as the mapping degree
$$
n_{\beta}= \deg(ev: \mathcal{M}_{1}(\beta, J) \rightarrow L)
$$
which is a rational number. If the Lagrangian $L$ is monotone with minimal Maslov number 2 then $n_{\beta}$ is an invariant of the choice of $J$. If $L$ is not monotone then $n_{\beta}$ depends on the choice of a generic $J$.

The potential function of a Lagrangian torus can be defined as follows
\begin{equation}
\mathfrak{PO}_{\mathfrak{b}}^{L}(b)=\sum_{\mu(\beta)=2} n_{\beta}e^{\mathfrak{b}\cap \beta} e^{b\cap \partial \beta} T^{\omega(\beta)}
\end{equation}
where
$$
\mathfrak{b}\in H^{2}(X; \Lambda_{0}), \quad b\in \widehat{\mathcal{M}}_{weak}\left(L, \mathfrak{m}^{\mathfrak{b}}\right)
$$
see Section 7 and Appendix 1 in \cite{FOOO3} for computations in detail. Actually since our Lagrangian is a 2-torus, for a generic almost complex structure we can assume that
$$
H^{1}\left(L; \Lambda_{0}\right)\big/ H^{1}\left(L; 2\pi \sqrt{-1}\mathbb{Z}\right) \subset
\widehat{\mathcal{M}}_{weak}\left(L, \mathfrak{m}^{\mathfrak{b}}\right)
$$
by a degree counting argument like Remark A.2 in \cite{FOOO3}. For a fixed bulk deformation $\mathfrak{b}$, we choose a basis $e_{1}, e_{2}$ of $H^{1}(L; \mathbb{Z})$ and write $b= x_{1}e_{1}+x_{2}e_{2}$ for $x_{1}, x_{2}\in \Lambda_{0}$. Then the potential function can be regarded as certain kinds of Laurent series in terms of $y_{1}=e^{x_{1}}, y_{2}=e^{x_{2}}$ with coefficients in $\Lambda_{+}$. Then we have the following theorem on the critical point theory of the potential function.
\begin{theorem}
(Theorem 2.3, \cite{FOOO3}) Let $L$ be a Lagrangian torus in $\left(X, \omega\right)$. Suppose that
$$
H^{1}\left(L; \Lambda_{0}\right)\big/ H^{1}\left(L; 2\pi \sqrt{-1}\mathbb{Z}\right) \subset
\widehat{\mathcal{M}}_{weak}\left(L, \mathfrak{m}^{\mathfrak{b}}\right)
$$
and $b\in H^{1}\left(L; \Lambda_{0}\right)$ is a critical point of the potential function $\mathfrak{PO}_{\mathfrak{b}}^{L}$. Then we have
$$
HF\left(\left(L; \mathfrak{b}, b\right), \left(L; \mathfrak{b}, b\right)\right)\cong H\left(L\right).
$$
In particular $L$ is nondisplaceable.
\end{theorem}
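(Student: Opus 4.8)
The plan is to compute the deformed Floer differential $\mathfrak{m}_1^{\mathfrak{b},b}$ directly on the cohomology $H^*(L;\Lambda_0)\cong H^*(T^2;\Lambda_0)$ and show that the critical point hypothesis forces it to vanish. First I would record that, because $b$ lies in $\widehat{\mathcal M}_{weak}(L,\mathfrak m^{\mathfrak b})$, the $\mathfrak b$-deformed, $b$-twisted $A_\infty$ structure has curvature $\mathfrak m_0^{\mathfrak b,b}=\mathfrak{PO}_{\mathfrak b}^{L}(b)\cdot\mathbf 1_L$, a scalar multiple of the unit; hence $(\mathfrak m_1^{\mathfrak b,b})^2=0$ and $CF((L;\mathfrak b,b),(L;\mathfrak b,b))$ is a genuine cochain complex whose cohomology is $HF((L;\mathfrak b,b),(L;\mathfrak b,b))$. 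Via homological perturbation I would pass to the canonical model on $H^*(T^2;\Lambda_0)$, where the induced $\mathfrak m_1^{\mathfrak b,b}$ is a $\Lambda_0$-linear, degree $+1$, $T$-adically filtered map whose order-zero part is the classical differential of $T^2$, which vanishes.

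Next I would pin down the matrix of $\mathfrak m_1^{\mathfrak b,b}$. By the degree constraint only the components $H^0\to H^1$ and $H^1\to H^2$ can be nonzero; unitality, $\mathfrak m_1^{\mathfrak b,b}(\mathbf 1_L)=0$, kills the first, so only the map $H^1(L;\Lambda_0)\to H^2(L;\Lambda_0)$ remains. Fix a basis $e_1,e_2$ of $H^1(L;\mathbb Z)$ and write $b=x_1e_1+x_2e_2$. The key computation is that the coefficient of the top class $e_1\wedge e_2$ in $\mathfrak m_1^{\mathfrak b,b}(e_i)$ equals $\partial\mathfrak{PO}_{\mathfrak b}^{L}/\partial x_i$ evaluated at $b$. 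This follows from the divisor axiom for the bulk-deformed $A_\infty$ operations: inserting a degree-one class $e_i$ on the boundary of a Maslov-index-two disk in class $\beta$ contributes the intersection number $\langle e_i,\partial\beta\rangle$, which is exactly the effect of the derivation $\partial/\partial x_i$ on the monomial $n_\beta e^{\mathfrak b\cap\beta}e^{b\cap\partial\beta}T^{\omega(\beta)}$ in $\mathfrak{PO}_{\mathfrak b}^{L}$, while the higher $A_\infty$ corrections either reproduce the same exponential resummation or land in the wrong degree. Thus $\mathfrak m_1^{\mathfrak b,b}$ is, up to sign and a Poincar\'e duality convention, ``multiplication by the gradient of $\mathfrak{PO}_{\mathfrak b}^{L}$ at $b$''.

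Since $b$ is a critical point of $\mathfrak{PO}_{\mathfrak b}^{L}$ we have $\partial\mathfrak{PO}_{\mathfrak b}^{L}/\partial x_i(b)=0$ for $i=1,2$ (equivalently $y_i\,\partial\mathfrak{PO}_{\mathfrak b}^{L}/\partial y_i(b)=0$ for $y_i=e^{x_i}$), so $\mathfrak m_1^{\mathfrak b,b}=0$ on the canonical model and $HF((L;\mathfrak b,b),(L;\mathfrak b,b))\cong H^*(T^2;\Lambda_0)=H(L)$. For nondisplaceability: if some $\phi\in Ham(X,\omega)$ displaced $L$, then $\phi(L)$ is disjoint from $L$ and the corresponding bulk-deformed, bounding-cochain-twisted Floer cohomology vanishes; but Hamiltonian invariance of deformed Floer cohomology identifies it with $HF((L;\mathfrak b,b),(L;\mathfrak b,b))\neq 0$, a contradiction. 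The main obstacle is the second paragraph: rigorously controlling the homological perturbation so that precisely the claimed term survives, and proving the divisor-axiom identity for the bulk-deformed operations with all signs and orientations correct — this is where one genuinely needs the Fukaya--Oh--Ohta--Ono machinery, and where the standing assumption that the whole affine subspace $H^1(L;\Lambda_0)/H^1(L;2\pi\sqrt{-1}\mathbb Z)$ consists of weak bounding cochains is what makes the exponential form of $\mathfrak{PO}_{\mathfrak b}^{L}$, hence the derivative identity, meaningful.
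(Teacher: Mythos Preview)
The paper does not give its own proof of this statement: it is quoted as Theorem 2.3 of \cite{FOOO3} and used as a black box in the subsequent sections. So there is no ``paper's proof'' to compare against beyond the original Fukaya--Oh--Ohta--Ono argument.

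Your sketch is essentially that original argument. Passing to the canonical model on $H^*(T^2;\Lambda_0)$, using unitality to kill the $H^0\to H^1$ component, and identifying the $H^1\to H^2$ component with the gradient of $\mathfrak{PO}_{\mathfrak b}^{L}$ via the divisor equation is exactly how FOOO prove it; you have also correctly flagged that the divisor identity and the homological perturbation with signs and orientations are where the real work lies and where one must invoke their machinery rather than reprove it. One small caution: your sentence ``by the degree constraint only the components $H^0\to H^1$ and $H^1\to H^2$ can be nonzero'' tacitly uses that the deformed differential has cohomological degree $+1$, which in turn relies on the fact that only Maslov index two disks contribute (this is implicit in the paper's formula (2.1) and in the standing genericity hypothesis on $J$ for a $2$-torus). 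That assumption should be made explicit if you write this up as a self-contained proof.
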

As we remarked above, the first condition is satisfied when $L$ is a 2-torus and the almost complex structure is generic. With this understood we directly use the critical point theory to locate nondisplaceable Lagrangian tori.

\section{Examples: del Pezzo surfaces}
In this section we work on an explicit example to show how to find nondisplaceable Lagrangian tori by the method of toric degeneration with a local deformation.

Let $(X, \omega, J)$ be a symplectic toric surface where the symplectic and complex structures are induced from a moment polytope. We say $X$ is Fano if all holomorphic spheres in $X$ have positive Chern numbers and it is semi-Fano if all holomorphic spheres have nonnegative Chern numbers. Semi-Fano toric surfaces are classified by their moment polytopes and there are 16 of them in total, 5 of which are Fano. In the rest of this note we use the name ``semi-Fano'' to represent those 11 surfaces which are not Fano. If we collapse those spheres with zero Chern number to points in a toric way we get a toric symplectic orbifold with $A_{n}$-type singularities. Then we can perform the smoothing operation by cutting and gluing appropriate Milnor fibers to get a smooth symplectic manifold $\hat{X}$. In \cite{CL} Chan and Lau carefully studied the one-point open Gromov-Witten invariants of toric fibers in $X$ to compute the disk potential function. Combining their computation with a deformation argument we compute the disk potential function with respect to the smoothing symplectic structure on $\hat{X}$.  We label the above 11 surfaces as $X_{1}, \cdots, X_{11}$ exactly following the notation in the appendix of \cite{CL}.

Let $L(u)$ be a toric fiber of a semi-Fano toric surface $X$. When the energy of -2-curves are very small we can deform the symplectic and complex structures in a neighborhood of those curves such that they become Lagrangian spheres, see Figure 1. This deformation happens away from a neighborhood of $L(u)$, which is represented by an interior dot in Figure 1. We will show that all the one-point open Gromov-Witten invariants are unchanged during the deformation.

\begin{figure}
  \begin{tikzpicture}[xscale=0.7, yscale=0.7]
  \path [fill=lightgray, lightgray] (0,-2)--(0,0)--(-2,-3);
  \draw (0,-2)--(0,0)--(-2,-3);
  \filldraw[black] (-0.5,-1.8) circle (1pt);
  \draw [->] (1,-1) -- (4,-1);
  \draw [->] (-2,-1) -- (-5,-1);
  \draw (-6,-2)--(-6,-1)--(-6.5,-1)--(-7,-1.5)--(-8,-3);
  \path [fill=lightgray, lightgray] (-6,-2)--(-6,-1)--(-6.5,-1)--(-7,-1.5)--(-8,-3);
  \filldraw[black] (-6.5,-1.8) circle (1pt);
  \draw (6,-1.5) .. controls (6.2,-1.5) and (6.5,0) .. (7,-1);
  \draw (7,-2)--(7,-1);
  \draw (5,-3)--(6,-1.5);
  \node [above] at (-3.5,-1) {toric blow up};
  \node [above] at (2.5,-1) {smoothing};
  \node [below] at (0,-3) {orbifold};
  \filldraw[black] (-6,-1) circle (1pt);
  \filldraw[black] (-6.5,-1) circle (1pt);
  \filldraw[black] (-7,-1.5) circle (1pt);
  \filldraw[black] (7,-1) circle (1pt);
  \filldraw[black] (6,-1.5) circle (1pt);
  \filldraw[black] (0,0) circle (1pt);
  \path [fill=lightgray, lightgray] (5,-3)--(6,-1.5) .. controls (6.2,-1.5) and (6.5,0) .. (7,-1)--(7,-2);
  \filldraw[black] (6.5,-1.8) circle (1pt);
  \end{tikzpicture}
  \caption{Toric resolution and smoothing.}
  \label{fig: Toric resolution and smoothing}
\end{figure}
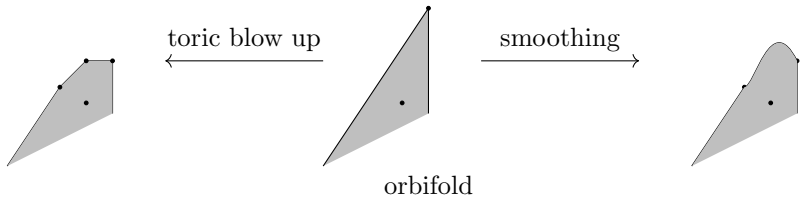

\subsection{Resolution and smoothing of the $A_{n}$-type singularity}
The cyclic group $\mathbb{Z}_{n+1}\cong \lbrace \eta \in \mathbb{C} \mid \eta^{n+1}=1 \rbrace$ acts on $\mathbb{C}^{2}$ by
$$
\eta\cdot (z_{1}, z_{2})=(\eta^{-1}z_{1}, \eta z_{2})
$$
which has a fixed point at the origin. We call this quotient singularity an $A_{n}$-type singularity. It is a toric singularity and admits a toric minimal resolution, the Hirzebruch-Jung resolution. Geometrically it is obtained by consecutively blowing up the singular locus $n$ times and in the moment polytope it is done by chopping corners with certain slopes. For details about the toric minimal resolution of $A_{n}$-type singularity, see section 4.3 in \cite{ABM}.

Note that the $A_{n}$-type singularity is also a hypersurface singularity which is isomorphic to
$$
M_{n, 0, +\infty}=\lbrace z=(z_{1}, z_{2}, z_{3})\mid z_{1}^{2}+z_{2}^{2}+z_{3}^{n+1}=0 \rbrace\subset \mathbb{C}^{3}
$$
near the origin. Therefore we can perform the smoothing operation where a neighborhood of the origin is replaced by a proper Milnor fiber. The Milnor fiber $M_{n, \epsilon, R}$ is a smooth manifold with boundary and inherits an exact symplectic structure from $\mathbb{C}^{3}$. The boundary of $M_{n, \epsilon, R}$ topologically is a lens space. After rearranging the boundary we can make it the lens space with standard contact structure. Next we glue $M_{n, \epsilon, R}$ with $M_{n, 0, +\infty}-M_{n, \epsilon, R'}$ by using the standard Liouville vector field for proper $R, R'$. Since the lens space has trivial second rational cohomology group the glued symplectic form is unique up to a symplectomorphism. We call the resulting smooth symplectic manifold the smoothing of $M_{n, 0, +\infty}$.

From singularity theory, the toric minimal resolution is known to be diffeomorphic to the smoothing. But they have different symplectic and complex structures. In the resolution we have a chain of -2-curves which topologically corresponds a chain of Lagrangian spheres in the smoothing. By Kronheimer's work we know that these two symplectic structures are not isolated but connected through a family of hyperK\"ahler metrics.

\subsection{HyperK\"ahler ALE spaces}Let $D_{1}, \cdots, D_{k}$ be a maximal chain of -2 toric divisors in a semi-Fano toric surface $X$. Then after a $SL(2; \mathbb{Z})$ transformation a neighborhood of this chain in the moment polytope is isomorphic to the toric minimal resolution of $A_{n}$-type singularity. In \cite{K1} and \cite{K2} Kronheimer showed that the minimal resolution of $A_{n}$-type singularity carries a family of hyperK\"ahler metrics. We briefly summary what we need from \cite{K1} and \cite{K2} in the following and also suggest \cite{R} for a symplectic point of view.

\begin{definition}
Let $\left(M, g\right)$ be a Riemannian manifold. It is called a hyperK$\ddot{a}$hler manifold if it carries three almost complex structures $I, J, K$ satisfying the quaternion relation $IJK=-1$ and $I, J, K$ are orthogonal covariant constant with respect to the Levi-Civita connection.
\end{definition}

Therefore a hyperK$\ddot{a}$hler manifold is K$\ddot{a}$hler with respect to each of the complex structures $I, J, K$, with corresponding K$\ddot{a}$hler forms
$$
\omega_{I}=g\left(I\cdot, \cdot\right),\quad \omega_{J}=g\left(J\cdot, \cdot\right),\quad \omega_{K}=g\left(K\cdot, \cdot\right).
$$

Moreover we have an $S^{2}$-family of K$\ddot{a}$hler forms for any hyperK$\ddot{a}$hler manifold. Given a vector $u=\left(u_{I}, u_{J}, u_{K}\right)\in S^{2}\subseteq \mathbb{R}^{3}$ then we get a complex structure $I_{u}=u_{I}I+u_{J}J+u_{K}K$ and a K$\ddot{a}$hler form
$$
\omega_{u}=u_{I}\omega_{I}+u_{J}\omega_{J}+u_{K}\omega_{K}.
$$

\begin{definition}
An ALE space (asymptotically locally Euclidean) is a hyperK$\ddot{a}$hler 4-manifold with precisely one end at infinity that is isometric to $\mathbb{C}^{2}\big/G$ for a finite subgroup $G\subseteq SU(2)$. The metric on $\mathbb{C}^{2}\big/G$ differs from the flat quotient metric by order $O\left(r^{-4}\right)$ terms and has an appropriate decay in the derivatives.
\end{definition}

The following existence and uniqueness theorems for hyperK$\ddot{a}$hler structures are Theorem 1.1 and Theorem 1.3 in \cite{K1}.

\begin{theorem}
Let $Y$ be the underlying smooth manifold of some minimal resolution of $\mathbb{C}^{2}\big/ G$. Let three cohomology classes $\kappa_{1}, \kappa_{2}, \kappa_{3}\in H^{2}\left(Y; \mathbb{R}\right)$ be given which satisfy the nondegeneracy condition
$$
\forall\Sigma\in H_{2}\left(Y; \mathbb{Z}\right)\quad with \quad \Sigma\cdot\Sigma=-2,
$$
$$
\exists i\in \lbrace 1, 2, 3\rbrace \quad with \quad \kappa_{i}\left(\Sigma\right)\neq 0.
$$
Then there exists on $Y$ an ALE hyperK$\ddot{a}$hler structure for which the cohomology classes of the K$\ddot{a}$hler forms $[\omega_{i}]=\kappa_{i}$.
\end{theorem}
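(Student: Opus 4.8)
\emph{Proof strategy.} The plan is to produce, for the given classes $\kappa_1,\kappa_2,\kappa_3$, a complete ALE hyperk\"ahler $4$-manifold whose K\"ahler periods are exactly the $\kappa_i$, and then to identify it diffeomorphically with $Y$; the manifold is built as a hyperk\"ahler quotient of a flat quaternionic vector space, following Kronheimer. \textbf{Setup.} Write $G\subset SU(2)$ for the finite group with $\mathbb{C}^2/G$ the given quotient, let $R=\mathbb{C}[G]$ be its regular representation and $Q=\mathbb{C}^2$ the defining one. Form $\mathbf{M}=(\mathrm{End}(R)\otimes Q)^{G}$, where $G$ acts by conjugation on $\mathrm{End}(R)$ and through $Q$ on the other factor; identifying $Q\cong\mathbb{H}$ makes $\mathbf{M}$ a flat hyperk\"ahler manifold. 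Decomposing $R=\bigoplus_k V_k\otimes\mathbb{C}^{n_k}$ into isotypic components, the group $\mathcal{F}=\bigl(\prod_k U(n_k)\bigr)/U(1)$ acts on $\mathbf{M}$ by hyperk\"ahler isometries and carries a hyperk\"ahler moment map $\mu=(\mu_1,\mu_2,\mu_3)\colon\mathbf{M}\to\mathfrak{f}^*\otimes\mathbb{R}^3$. A character count gives $\dim_{\mathbb{R}}\mathbf{M}=4\abs{G}$ and $\dim\mathcal{F}=\abs{G}-1$, so each quotient below is a $4$-manifold; moreover $\dim\mathfrak{z}(\mathfrak{f})^*$ equals the number of nontrivial irreducibles of $G$, which by the McKay correspondence equals $\dim H^2(Y;\mathbb{R})$, and I fix once and for all an identification $\mathfrak{z}(\mathfrak{f})^*\cong H^2(Y;\mathbb{R})$ sending the simple roots to a system of fundamental weights.

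\textbf{Smoothness off the walls.} For a central parameter $\zeta=(\zeta_1,\zeta_2,\zeta_3)$ set $X_\zeta=\mu^{-1}(\zeta)/\mathcal{F}$, and under the above identification take $\zeta_i$ to be the given class $\kappa_i$. The first step is to show that the $\mathcal{F}$-action on $\mu^{-1}(\zeta)$ is free whenever the $\kappa_i$ satisfy the stated nondegeneracy condition, so that the hyperk\"ahler quotient metric $g_\zeta$ on $X_\zeta$ is smooth. This is the standard mechanism of the hyperk\"ahler quotient: a point with nontrivial stabiliser forces a proper $G$-invariant subspace of $R$ compatible with the quiver data, which in turn produces a class $\Sigma\in H_2(Y;\mathbb{Z})$ with $\Sigma\cdot\Sigma=-2$ and $\zeta_i(\Sigma)=0$ for every $i$ --- precisely what the hypothesis excludes. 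Since there are only finitely many such $\Sigma$, only finitely many codimension-three ``walls'' $\bigcap_i\{\zeta_i(\Sigma)=0\}$ have to be avoided.

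\textbf{ALE property, topology, periods.} At $\zeta=0$ one identifies $X_0$ with the flat orbifold $\mathbb{C}^2/G$ by an explicit algebraic description of $\mu^{-1}(0)/\mathcal{F}$ (the vanishing of the complex moment map makes the relevant pair of endomorphisms commute). For general $\zeta$ one constructs a natural proper map $X_\zeta\to X_0=\mathbb{C}^2/G$ which is a diffeomorphism outside a compact set, so $X_\zeta$ has a single end modelled on $\mathbb{C}^2/G$; one then shows that $g_\zeta$ is complete and that at infinity it differs from the flat orbifold metric by $O(r^{-4})$ with matching decay of derivatives, by estimating solutions of the moment-map equations near infinity and comparing $\mu^{-1}(\zeta)$ with $\mu^{-1}(0)$ --- this is the ALE statement. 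For $\zeta$ in a generic chamber $X_\zeta$ is simply connected with $b_2$ equal to the number of exceptional curves and intersection form the negative-definite $ADE$ Cartan matrix, which identifies it diffeomorphically with the minimal resolution $Y$; for a nondegenerate $\zeta$ adjacent to a wall one joins it to a generic chamber by a path avoiding all walls, along which the diffeomorphism type is locally constant. Finally, certain $\mathbb{P}^1$'s in $X_\zeta$ arise as quotients of explicit $\mathcal{F}$-orbits and represent the classes dual to the simple roots; integrating $\omega_i$ over them and using the moment-map identity yields $[\omega_i]=\zeta_i=\kappa_i$ under the fixed identification, completing the proof.

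\textbf{Main obstacle and an alternative.} I expect the analytic core --- completeness of $g_\zeta$ together with the sharp $O(r^{-4})$ asymptotics of the previous step --- to be the hardest part, since it requires genuine estimates on the nonlinear moment-map equations rather than representation-theoretic bookkeeping; pinning down the diffeomorphism type through the chamber structure is also delicate. A partial shortcut avoiding the quotient: when $\kappa_1$ already lies in the K\"ahler cone of some minimal resolution $Y\to\mathbb{C}^2/G$, i.e.\ $\kappa_1(\Sigma)>0$ on every exceptional curve, and $\kappa_2+\sqrt{-1}\kappa_3$ is the period of the holomorphic symplectic form, one may instead fix that complex structure, normalise the holomorphic $2$-form so that its periods are $\kappa_2+\sqrt{-1}\kappa_3$, and invoke the ALE Calabi--Yau theorem (Tian--Yau, Bando--Kasue--Nakajima) to obtain a Ricci-flat ALE K\"ahler metric with K\"ahler class $\kappa_1$; in real dimension four a Ricci-flat K\"ahler metric is automatically hyperk\"ahler. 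This route, however, only reaches $\kappa$ with $\kappa_1$ inside a K\"ahler cone, and it is precisely the hyperk\"ahler-rotation freedom built into the quotient construction that allows one to attain every nondegenerate triple $(\kappa_1,\kappa_2,\kappa_3)$.
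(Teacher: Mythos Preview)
The paper does not prove this theorem at all: it is quoted verbatim as Theorem~1.1 of Kronheimer~\cite{K1} and used as a black box to produce the deformation family of symplectic forms in Proposition~3.5. So there is no ``paper's own proof'' to compare against; the author simply cites the result.

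That said, your outline is essentially Kronheimer's original argument, and as a sketch it is accurate. The hyperk\"ahler quotient of $(\mathrm{End}(R)\otimes Q)^G$ by $\mathcal{F}$, the identification of the center $\mathfrak{z}(\mathfrak{f})^*$ with $H^2(Y;\mathbb{R})$ via McKay, the wall-and-chamber description of the singular locus, the comparison map $X_\zeta\to X_0\cong\mathbb{C}^2/G$, and the period computation over the exceptional $\mathbb{P}^1$'s are all the right ingredients. Your honest assessment that the ALE asymptotics are the analytic core is also correct; in Kronheimer's paper this is handled by an explicit comparison of the quotient metric with the flat model at infinity. One small inaccuracy: the identification $\mathfrak{z}(\mathfrak{f})^*\cong H^2(Y;\mathbb{R})$ sends the natural basis to the simple \emph{roots} of the Cartan matrix (equivalently, the classes of the exceptional curves), not to fundamental weights; this matters for getting the period formula $[\omega_i]=\zeta_i$ right. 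Your alternative via the ALE Calabi--Yau theorem is a reasonable remark but, as you say, only covers triples with $\kappa_1$ in a K\"ahler cone, so it does not supersede the quotient construction.
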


\begin{theorem}
If $Y_{1}$ and $Y_{2}$ are two ALE hyperK$\ddot{a}$hler 4-manifolds, and there is a diffeomorphism $Y_{1}\rightarrow Y_{2}$ under which the cohomology classes of the K$\ddot{a}$hler forms agree, then $Y_{1}$ and $Y_{2}$ are isometric.
\end{theorem}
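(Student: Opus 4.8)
The plan is to reduce Theorem~3.4 to a Torelli-type rigidity statement for hyperK\"ahler structures on a \emph{fixed} smooth four-manifold and then to run the standard period-map argument for ALE gravitational instantons. First I would use the given diffeomorphism $f\colon Y_{1}\to Y_{2}$ to pull the hyperK\"ahler triple of $Y_{2}$ back to $Y_{1}$: after this we are handed two ALE hyperK\"ahler metrics $g_{0},g_{1}$ on one and the same manifold $Y$ whose triples of K\"ahler classes coincide in $H^{2}(Y;\mathbb{R})^{\oplus 3}$, and whose asymptotic models $\mathbb{C}^{2}/G$ coincide as well (the group $G$ is detected as the fundamental group of the end). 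The goal becomes to show that $g_{0}$ and $g_{1}$ differ by a diffeomorphism of $Y$ isotopic to the identity, which then produces the desired isometry $Y_{1}\cong Y_{2}$.

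Next I would introduce the moduli space $\mathfrak{M}(Y)$ of ALE hyperK\"ahler structures on $Y$ with the fixed asymptotic model, taken modulo the diffeomorphisms isotopic and asymptotic to the identity, together with the period map
$$P\colon \mathfrak{M}(Y)\longrightarrow H^{2}(Y;\mathbb{R})^{\oplus 3},\qquad [g]\longmapsto \big([\omega_{I}],[\omega_{J}],[\omega_{K}]\big).$$
Working in weighted Sobolev spaces adapted to the ALE end, one identifies the infinitesimal deformations of such a structure with triples of decaying harmonic anti-self-dual two-forms; since Ricci-flat ALE metrics are unobstructed, $\mathfrak{M}(Y)$ is a smooth manifold of dimension $3\,\mathrm{rk}\,H^{2}(Y;\mathbb{R})$, and the differential of $P$ sends a harmonic triple to its de Rham class, hence is an isomorphism onto the tangent space of the open set $U\subset H^{2}(Y;\mathbb{R})^{\oplus 3}$ cut out by the nondegeneracy condition of the existence theorem above (Theorem~3.3). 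Thus $P$ is a local diffeomorphism, and by that existence theorem its image is exactly $U$.

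The decisive global input is \emph{properness} of $P$ over $U$. Given a sequence of ALE hyperK\"ahler metrics on $Y$ whose period points stay in a compact subset of $U$ --- hence bounded away from the codimension-three ``walls'' on which some class $\Sigma$ with $\Sigma\cdot\Sigma=-2$ pairs trivially with all three K\"ahler classes, and suitably normalized at infinity --- I would extract a subsequence converging in $\mathfrak{M}(Y)$, using ALE decay to control the end, the distance to the walls to bound from below the areas of the exceptional curves so that nothing collapses and no volume escapes, and $\varepsilon$-regularity for Ricci-flat anti-self-dual four-manifolds to obtain interior curvature bounds. Hence $P$ is a covering map onto $U$; since $U$ is the complement in a real vector space of countably many linear subspaces of codimension three it is simply connected, so $P$ restricts to a diffeomorphism on every connected component of $\mathfrak{M}(Y)$.

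It remains to exclude that $g_{0}$ and $g_{1}$ lie in different components of $\mathfrak{M}(Y)$, i.e.\ to upgrade the covering to genuine injectivity of $P$. I would obtain this by showing that every ALE hyperK\"ahler structure on $Y$ with the prescribed asymptotic model is isometric to one of the explicit hyperK\"ahler quotients built for the existence theorem, so that $\mathfrak{M}(Y)$ is connected and $P$ is a global diffeomorphism onto $U$; then $P([g_{0}])=P([g_{1}])$ forces $[g_{0}]=[g_{1}]$ and the isometry follows. I expect this identification of an abstract ALE hyperK\"ahler manifold with a member of the quotient family, together with the properness estimate above, to be the genuine difficulties, the deformation theory and the topology of $U$ being comparatively routine. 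Since Theorem~3.4 is quoted verbatim from \cite{K1}, one may alternatively simply cite that paper for the complete proof.
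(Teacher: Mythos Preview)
Your proposal sketches the genuine proof of Kronheimer's Torelli-type theorem, and the outline is broadly correct: pull back to a fixed manifold, set up the period map on the moduli space of ALE hyperK\"ahler structures, show it is a local diffeomorphism via the deformation theory, prove properness over the nondegenerate locus $U$, use simple-connectedness of $U$ to conclude it is a covering, and finally argue connectedness of the moduli space. You are also right that properness and the identification with the explicit hyperK\"ahler quotients are the real work.

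However, the paper itself does not prove Theorem~3.4 at all. It is stated purely as background, attributed to Kronheimer (Theorem~1.3 in \cite{K1}), and used as a black box in the construction of the deformation family in Proposition~3.5. So while your sketch is substantive and essentially correct, it goes far beyond what the paper does --- the paper's ``proof'' is simply the citation, which you yourself note as an alternative at the end of your proposal. For the purposes of this paper, that citation is all that is expected.
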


Next we construct a family of symplectic forms on a semi-Fano toric surface $X$. Let $B_{r}$ be the ball of radius $r$ in $\mathbb{C}^{2}$. Let $\widetilde{\mathbb{C}^{2}\big/ \mathbb{Z}_{n+1}}$ be the toric resolution of the $A_{n}$-type singularity. We start with an open neighborhood $U$ of a maximal chain of -2 toric divisors $D_{1}, \cdots, D_{n}$ such that $U$ is symplectomorphic to $\widetilde{\mathbb{C}^{2}\big/ \mathbb{Z}_{n+1}}\cap B_{r}$. The toric symplectic and complex structures on $U$ are denoted by $(\omega_{0}, J_{0})$. Set $\kappa_{1}=[\omega_{0}], \kappa_{2}=\kappa_{3}=[0]$. We assume that $[\omega_{0}](D_{i})=\alpha_{i}$ where $\alpha_{i}$ are positive irrational numbers such that $\lbrace\alpha_{i}\rbrace_{1\leq i\leq n}$ are linearly independent over $\mathbb{Z}$. Then the nondegeneracy condition in above theorem is satisfied and we have an ALE hyperK$\ddot{a}$hler structure on $\widetilde{\mathbb{C}^{2}\big/ \mathbb{Z}_{n+1}}$, which we denote by $\omega_{I}, \omega_{J}, \omega_{K}$. By the uniqueness theorem we know that $\omega_{0}=\omega_{I}$ as symplectic forms up to a symplectomorphism. (Another way to see $\omega_{0}=\omega_{I}$ is to connect them in the K$\ddot{a}$hler cone and apply Moser's theorem.) Therefore we can connect $\omega_{I}$ and $\omega_{J}$ by a family of symplectic forms. The ALE property shows that for large $r'$ we can rearrange the ends such that outside a larger radius $r''$ all our $\omega_{t}$ is the standard quotient symplectic form and the boundary is the lens space with the standard contact structure. Then by a rescaling from $r''$ to $r$ we can glue the region inside the radius $r$ to $X-U$. Hence we get a deformation from $\omega_{I}$ to $\omega_{J}$ inside $U$ such that the symplectic structure outside $U$ is unchanged. In summary we have the following proposition.

\begin{proposition}
Let $X$ be a semi-Fano toric surface and $U$ be a neighborhood of a maximal chain of -2 toric divisors $D_{1}, \cdots, D_{n}$. Let $\omega_{0}$ be the toric symplectic and complex structures on $X$. Then there exists a one-parameter family $\lbrace\omega_{t}\rbrace_{t\in [0,1]}$ on $X$ such that
\begin{enumerate}
\item $\omega_{t}(D_{i})$ decreases to $\omega_{1}(D_{i})=0$, for all $1\leq i\leq n$. In particular this family avoids the symplectic structure $-\omega_{0}$;
\item $\lbrace\omega_{t}\rbrace_{t\in [0,1]}$ does not change outside the neighborhood $U$.
\end{enumerate}
\end{proposition}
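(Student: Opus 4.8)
The plan is to run the construction sketched just above the statement: build Kronheimer's ALE hyperK\"ahler metric on the local model of $U$, rotate its K\"ahler form through the $S^2$-family of compatible K\"ahler forms until every $[D_i]$ acquires zero area, and then truncate the rotated forms near the end and reglue them into $X\setminus U$.

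I would begin on the local model $Y := \widetilde{\mathbb{C}^{2}/\mathbb{Z}_{n+1}}$, with $U$ identified with $Y\cap B_r$ carrying the toric pair $(\omega_0, J_0)$. Set $\alpha_i := [\omega_0](D_i) > 0$; after a perturbation of $\omega_0$ supported well inside $U$ one may in addition take the $\alpha_i$ irrational and $\mathbb{Z}$-linearly independent, but only their positivity enters below. The intersection form on $H_2(Y;\mathbb{Z})$ is minus the $A_n$ Cartan matrix in the basis $[D_i]$, so every class $\Sigma$ with $\Sigma\cdot\Sigma = -2$ is a root, $\Sigma = \pm([D_a]+\cdots+[D_b])$, whence $[\omega_0](\Sigma) = \pm(\alpha_a+\cdots+\alpha_b)\neq 0$. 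Hence the triple $\kappa_1 := [\omega_0]$, $\kappa_2 = \kappa_3 = 0$ satisfies the nondegeneracy hypothesis of Theorem 3.3, and we obtain an ALE hyperK\"ahler structure $(g,I,J,K)$ on $Y$ with $[\omega_I] = \kappa_1$ and $[\omega_J] = [\omega_K] = 0$. By Theorem 3.4 --- or by connecting $\omega_0$ to $\omega_I$ through K\"ahler forms in the convex K\"ahler cone of $Y$ and running Moser's theorem, whose flow converges since the deformation is exact and decays at the ALE end --- these two symplectic forms agree up to a symplectomorphism that is standard on a collar of $\partial U$ (hence supported in $U$), so that we may assume $\omega_0 = \omega_I$ on $U$.

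Next, fix a path $u(t) \in S^2$, $t \in [0,1]$, inside the closed hemisphere $\{u_I \geq 0\}$, running from $(1,0,0)$ to $(0,1,0)$ with $u_I(t)$ strictly decreasing, and set $\tilde\omega_t := \omega_{u(t)} = u_I(t)\,\omega_I + u_J(t)\,\omega_J$. Each $\tilde\omega_t$ is the K\"ahler form of the fixed metric $g$ for the complex structure $u_I(t)I + u_J(t)J$, hence a genuine symplectic form; moreover $\tilde\omega_0 = \omega_I = \omega_0$, and $[\tilde\omega_t](D_i) = u_I(t)\,\alpha_i$ decreases from $\alpha_i$ to $0$. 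Keeping the path in $\{u_I\geq 0\}$ keeps all these periods nonnegative, which will force the resulting family on $X$ to avoid $-\omega_0$ (whose periods on the $D_i$ equal $-\alpha_i < 0$). It remains to rearrange each $\tilde\omega_t$ near the ALE end so that it becomes one fixed standard form there and can be reglued to $(X\setminus U, \omega_0)$. On a neighborhood of the end, which has vanishing $H^2$, $\tilde\omega_t$ is exact and, by the ALE decay, differs from the flat conical model form $\omega^{0}_{u(t)}$ by $O(r^{-4})$; the forms $\omega^{0}_{u(t)}$ constitute a connected family of conical symplectic forms on $\mathbb{C}^{2}/\mathbb{Z}_{n+1}$, all with the radial Liouville field and all inducing the standard contact structure on the link $L(n+1,n)$. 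Combining this with a radial cutoff and a parametrized Moser-type argument on a finite collar $L(n+1,n)\times[a,b]$ of the end, one modifies $\tilde\omega_t$ only inside that collar, leaving a neighborhood of the $-2$-chain untouched, so that near the outer boundary it coincides with $\omega^{0}_{(1,0,0)} = \omega_0$; once the collar is pushed far enough out the corrections are $C^0$-small and the forms stay symplectic. Gluing the resulting forms on $Y\cap B_r$ to $\omega_0$ on $X\setminus U$, and reparametrizing $t$ if desired, produces $\{\omega_t\}_{t\in[0,1]}$ on $X$: it is symplectic, equals $\omega_0$ at $t=0$, is unchanged outside $U$, has $\omega_t(D_i) = u_I(t)\,\alpha_i \searrow 0$, and avoids $-\omega_0$.

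The main obstacle is precisely this last step, the rearrangement of ends. Identifying $\omega_0$ with $\omega_I$ is routine (noncompact Moser, or Theorem 3.4), and the hyperK\"ahler rotation is immediate; but making the rotated K\"ahler forms literally equal to one fixed standard model near the ALE end, uniformly in $t$ and without disturbing a neighborhood of the chain, requires the quantitative ALE asymptotics together with a careful parametrized symplectic interpolation on the collar, as well as the observation that the various flat conical models determine isotopic contact structures on the lens space, so that the reglued forms are genuinely smooth symplectic forms on $X$.
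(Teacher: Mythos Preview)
Your proposal is correct and follows essentially the same route as the paper: identify $U$ with a ball in the ALE resolution, invoke Kronheimer's existence and uniqueness theorems with $\kappa_1=[\omega_0]$, $\kappa_2=\kappa_3=0$ to obtain the hyperK\"ahler triple, rotate from $\omega_I$ to $\omega_J$, and use the ALE asymptotics to rearrange the end and reglue into $X\setminus U$. Your treatment is in fact slightly more careful than the paper's sketch --- you observe that positivity of the $\alpha_i$ alone already forces nondegeneracy (the paper's irrationality hypothesis is superfluous, since every $-2$-class is a root $\pm\sum_{a}^{b}[D_i]$), you specify the path in $S^2$ explicitly, and you correctly flag the parametrized end-rearrangement as the place where the genuine analytic work lies --- but the strategy is identical.
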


\begin{remark}
About this deformation we make the following remarks.
\begin{enumerate}
\item We need to deform the almost complex structure as we deform the symplectic structure because there may not be a single almost complex structure which is compatible to all $\omega_{t}$. In practice how we choose the almost complex structure will be explained in next subsection.
\item We can choose $\omega_{1}$ to be the symplectic form on the $A_{n}$-Milnor fiber, with a rearranging on the boundary. In this case the deformation can be regarded as a family version of cutting and gluing Milnor fibers in previous subsection and in \cite{FOOO3}.
\item This gluing operation is just the ``pregluing'' in differential geometry because we do not need to go one step further to approximate some Calabi-Yau metric.
\item To get a similar deformation family we may use the simultaneous resolution model by Brieskorn \cite{BR} or the symplectic deflation technique by Li-Usher \cite{LU} to degenerate the -2-curves.
\end{enumerate}
\end{remark}

\subsection{Invariance of $n_{\beta}$}
From above discussion we have a family of smooth symplectic manifold $(X, \omega_{t})_{t\in [0,1]}$. We choose a family of almost complex structures and show that all $n_{\beta}$ are invariant if we put some assumptions on this deformation family.

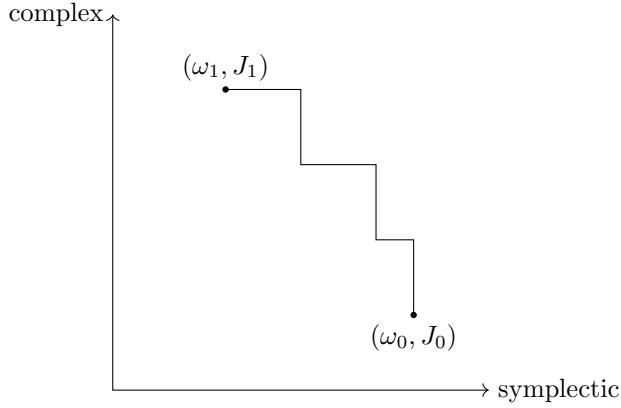
\begin{figure}
  \begin{tikzpicture}[xscale=0.5, yscale=0.5]
  \draw [<->] (0,10) -- (0,0) -- (10,0);
  \draw (8,2)--(8,4)--(7,4)--(7,6)--(5,6)--(5,8)--(3,8);
  \node [left] at (0,10) {complex};
  \node [right] at (10,0) {symplectic};
  \node [below] at (8,2) {$\left(\omega_{0}, J_{0}\right)$};
  \node [above] at (3,8) {$\left(\omega_{1}, J_{1}\right)$};
  \filldraw[black] (8,2) circle (2pt);
  \filldraw[black] (3,8) circle (2pt);
  \end{tikzpicture}
  \caption{Deformation of symplectic and complex structures.}
  \label{fig: Deformation}
\end{figure}

The initial manifold is $(X, \omega_{0}, J_{0})$. We fix a toric fiber $L(u)$ where $u$ is an interior point of the moment polytope of $X$. Then all $n_{\beta}$ are well-defined and have been computed in \cite{CL}. In particular there are finitely many nonzero $n_{\beta}$ and all such $\beta$ have the following form
$$
\beta=\beta_{j}+\sum_{i=1}^{k}s_{i}D_{i}
$$
where $\beta_{j}$ is a basic disk class and $D_{1}, \cdots, D_{n}$ is a chain of -2 toric divisors. See Definition 1.1 and Theorem 1.2 in \cite{CL} for more details of their computation. Then we choose a small neighborhood $U$ of the maximal chain of these -2 toric divisors such that $u\not\in U$ to get the above family of symplectic structures.

\begin{proposition}
If $\omega_{0}(D_{i})$ is small enough for all $1\leq i\leq n$. Then the mapping degree of
$$
ev: \mathcal{M}_{1} \left(\omega_{t}; L(u); \beta \right) \rightarrow L(u)
$$
is well-defined for all $t$ and independent of $t$, with respect to a certain family of almost complex structures $J_{t}$.
\end{proposition}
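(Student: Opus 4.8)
The plan is to establish $t$-independence of the mapping degree by exhibiting, for each relevant class $\beta = \beta_j + \sum_i s_i D_i$, a compact cobordism between the moduli spaces $\mathcal{M}_1(\omega_0; L(u); \beta)$ and $\mathcal{M}_1(\omega_1; L(u); \beta)$ that evaluates properly to $L(u)$. Since the deformation $\{\omega_t\}$ is supported in the neighborhood $U$ of the $-2$-chain and $L(u)$ together with a neighborhood of it is disjoint from $U$, the almost complex structures $J_t$ can be chosen to agree with a fixed $J$ on $X \setminus U$, and to vary only on $U$. First I would set up the parametrized moduli space $\mathcal{M}_1(\{\omega_t, J_t\}; L(u); \beta) = \bigsqcup_t \{t\} \times \mathcal{M}_1(\omega_t; L(u); \beta)$ and argue it carries a fundamental chain of real dimension $3$ with boundary the two endpoint fibers, using the Condition 6.1-type transversality package of \cite{FOOO3} in the parametrized setting. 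The evaluation map $ev$ then gives a chain map, and the degree is a cobordism invariant provided the parametrized moduli space is compact. So everything reduces to a compactness/a priori-energy statement.

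The key step — and the main obstacle — is ruling out bubbling or degeneration as $t$ varies that would spoil compactness or change the count. Here is where the hypothesis ``$\omega_0(D_i)$ small enough'' enters. I would argue as follows. The symplectic area $\omega_t(\beta) = \omega_t(\beta_j) + \sum_i s_i \omega_t(D_i)$ stays uniformly bounded (indeed $\omega_t(D_i) \le \omega_0(D_i)$ decreases to $0$), so Gromov compactness applies and limits are stable maps. A priori a sequence of disks in class $\beta$ could degenerate into a disk component in some class $\beta'$ plus sphere bubbles; by monotonicity/Maslov-index reasons the sphere bubbles must have nonnegative Chern number — and since $X$ is semi-Fano the relevant $-2$-curves live in $U$ and their classes are exactly the $D_i$. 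The point is that the disk component must then carry class $\beta_j + \sum_i s_i' D_i$ with $s_i' \le s_i$, and for the evaluation-image count to change we would need genuine wall-crossing, i.e. a one-parameter family of configurations whose disk part has area $\ge$ some fixed positive amount contributed by the basic class $\beta_j$. Since $\beta_j$'s area is bounded below away from $U$ and is $t$-independent, while all the ``variable'' area $\sum s_i \omega_t(D_i)$ can be made smaller than any fixed threshold by shrinking $\omega_0(D_i)$, one shows there is no $t$ at which such a codimension-one degeneration occurs: the moduli space stays a closed $3$-manifold-with-boundary and $ev$ stays proper. I expect the technical heart to be the bookkeeping that no \emph{new} Maslov-2 disk classes can appear in the limit — this uses that $L(u)$ does not meet $U$, so any disk boundary lies outside $U$, together with the classification of Maslov-2 classes in \cite{CL}.

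Finally, I would assemble the pieces: compactness of the parametrized moduli space gives that $ev : \mathcal{M}_1(\{\omega_t, J_t\}; L(u); \beta) \to L(u)$ is a proper map from a compact oriented $3$-manifold-with-boundary whose boundary maps to $L(u)$ via $ev$ at $t=0$ and $t=1$; hence the two endpoint degrees agree, proving $n_\beta$ is independent of $t$. The same argument applied to each of the finitely many classes $\beta$ with $n_\beta \neq 0$ (and noting no previously-zero invariant can become nonzero, again by the energy bound) completes the proof. One should also check the orientation conventions on the parametrized moduli space are consistent with those defining $n_\beta$, which is routine given the setup in \cite{FOOO3}.
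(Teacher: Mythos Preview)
Your overall architecture is right --- a parametrized cobordism over $[0,1]$ with compactness yielding degree invariance --- and matches the paper's strategy. But the compactness step has a real gap.

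You treat the possible degenerations as ``a disk component in some class $\beta'$ plus sphere bubbles,'' and then argue about sphere classes and Chern numbers. The actual danger, and the one the paper spends its effort on, is a \emph{disk} bubble of Maslov index zero: a splitting $\beta = \beta' + \beta''$ with $\mu(\beta')=2$, $\mu(\beta'')=0$, where $\beta''$ is a genuine disk class with boundary on $L(u)$. Sphere bubbles and negative-Maslov disks are disposed of by a generic choice of the one-parameter family (their expected dimensions are $-2$ and $\le -2$ respectively), but Maslov-zero disks have expected dimension $-1$ and so can appear at isolated times in a one-parameter family; you cannot kill them by genericity alone. Your ``wall-crossing'' paragraph does not address this, and the sentence about the disk part having area bounded below by the $\beta_j$-area does not by itself exclude such a splitting.

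The missing mechanism is the following. A putative Maslov-zero disk bubble $\beta''$ has symplectic area at most $\sum_i s_i\,\omega_0(D_i)$, since the ``main'' Maslov-two piece must meet the divisor chain for homological reasons and hence carries at least the basic energy $l_j(u)$. Because there are only finitely many admissible tuples $\{s_i\}$, one can choose the $\omega_0(D_i)$ so small that this residual energy is strictly less than the energy any disk with boundary on $L(u)$ needs to reach the deformed region $U$. Hence the Maslov-zero bubble lives entirely in the region where $J_t = J_0$ is the toric complex structure; there Cho--Oh's Maslov index formula forces any holomorphic disk to have Maslov index equal to twice its intersection with the toric boundary, so a Maslov-zero disk is constant. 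This is what actually closes the argument, and it is where the hypothesis ``$\omega_0(D_i)$ small enough'' is used in an essential way --- not merely as a threshold bound, but to confine the bubble to the toric locus where Cho--Oh applies.
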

\begin{proof}
The idea is to use a cobordism argument. It suffices to show that for any time $t$ there a breaking of disks or spheres will not appear.

Let $J_{t}$ be a smooth family of almost complex structures such that $J_{t}$ is $\omega_{t}$-tame. The existence of such a family $J_{t}$ can be obtained as follows. Let $\mathcal{J}_{\tau}\left(\omega_{t}\right)$ be the space of $\omega_{t}$-tame almost complex structures, then for $\abs{t'-t''}$ small enough we have that
$$
\bigcap_{t'\leq t\leq t''}\mathcal{J}_{\tau}\left(\omega_{t}\right)\neq \emptyset, \quad \forall t'<t''
$$
Let $0=t_{0}<t_{1}<\cdots <t_{n}=1$ be a division of $[0, 1]$ such that
$$
\bigcap_{t_{k}\leq t\leq t_{k+1}}\mathcal{J}_{\tau}\left(\omega_{t}\right)\neq \emptyset.
$$
Such a division exists because $\mathcal{J}_{\tau}\left(\omega_{t}\right)$ is open in the space of almost complex structures and our path is compact. We start with $\omega_{0}$ and $J_{0}$ which is the toric complex structure on $X$. For a fixed division as above we pick $J_{t_{1}}\in \cap_{0<t<t_{1}}\mathcal{J}\left(\omega_{t}\right)$ and connect $J_{0}$ with $J_{t_{1}}$ by a path $J_{s}$ in $\mathcal{J}_{\tau}\left(\omega_{0}\right)$. When we deform the symplectic structure from $\omega_{0}$ to $\omega_{t_{1}}$ the almost complex structure $J_{t_{1}}$ is unchanged. Hence we get a cobordism from $\mathcal{M}_{1} \left(\omega_{0}; J_{0}; \beta \right)$ to $\mathcal{M}_{1} \left( \omega_{0}; J_{t_{1}}; \beta \right)$. Next we get a cobordism from $\mathcal{M}_{1} \left(\omega_{0}; J_{t_{1}}; \beta \right)$ to $\mathcal{M}_{1} \left(\omega_{t_{1}}; J_{t_{1}}; \beta \right)$ since $J_{t_{1}}$ is $\omega_{t}$-tame for all $0\leq t\leq t_{1}$. That is, we first fix symplectic structure and deform the almost complex structure then fix a common compatible almost complex structure and deform the symplectic structure, see Figure 2. We extend this procedure step by step to get a cobordism from $\mathcal{M}_{1} \left( \omega_{0}; J_{0}; \beta \right)$ to $\mathcal{M}_{1} \left( \omega_{J}; J_{1}; \beta \right)$ where $J_{0}$ is the toric complex structure and $J_{1}$ is a generic almost complex structure compatible with $\omega_{1}$. During this process we can make that $J_{t}=J_{0}$ outside $U$ just by picking smaller open subset $V$ in $U$ and extend the almost complex structures.

The initial manifold is $(X, \omega_{0}, J_{0})$. Since $J_{0}$ is toric we can apply Cho-Oh's classification and regularity theorems in \cite{CO}. So there is no holomorphic disk of nonpositive Maslov index hence all our moduli spaces carry a fundamental cycle and the mapping degrees are well-defined. By dimension formulas
$$
dim \mathcal{M}(\beta, J)=2+\mu(\beta)-3=\mu(\beta)-1, \quad \beta\in H_{2}(X, L(u))
$$
and
$$
dim \mathcal{M}(D, J)=4+2c_{1}(D)-6=-2, \quad D\in H_{2}(X)
$$
we can choose our one-parameter family to be generic to avoid disk bubbles of strictly negative Maslov index and sphere bubbles.

Next we prove that during the deformation there is no disk bubble of Maslov index 0. Let
$$
\beta=\beta_{j}+\sum_{i=1}^{k}s_{i}D_{i}
$$
then we have that
\begin{equation}
\begin{split}
\omega_{t}(\beta)=& \omega_{t}(\beta_{j})+\sum_{i=1}^{n}s_{i}\omega_{t}(D_{i})\\
\leq & l_{j}(u)+\sum_{i=1}^{n}s_{i}\omega_{0}(D_{i})
\end{split}
\end{equation}
because in our deformation the symplectic energy of class $D_{i}$ decreases to zero. Here $l_{j}$ is the $j$th affine function defining the moment polytope where $D_{j}$ corresponds to $l_{j}=0$. Now suppose that there is a disk bubble of Maslov index 0. That is, our holomorphic disk of class $\beta$ splits into two disks of classes $\beta'$ and $\beta''$ respectively where
$$
\beta=\beta' +\beta'', \quad \mu(\beta')=2, \quad \mu(\beta'')=0.
$$
The image of those singular disks contains several disk and sphere components. There exist at least one disk component which intersect the chain of $D_{i}$ for homological reason. This main component consumes most of the symplectic energy, which is roughly $l_{j}(u)$. Other components have very small symplectic energy, which is less than $\sum_{i=1}^{n}s_{i}\omega_{0}(D_{i})$.

We only have finitely many tuples $\lbrace s_{i}\rbrace$ as a priori such that $n_{\beta}\neq 0$. Therefore we can make $\sum_{i=1}^{n}s_{i}\omega_{0}(D_{i})$ uniformly small such that the image of other components can not escape the undeformed region. Note that we can shrink the neighborhood $U$ very small such that all holomorphic disks with boundary on $L(u)$ need some energy to reach $U$, which is larger than $\sum_{i=1}^{n}s_{i}\omega_{0}(D_{i})$. Moreover we can assume that the image of other components lies in a neighborhood of $L(u)$ which does not intersect any toric divisor. In this neighborhood we have toric complex structure and Cho-Oh's Maslov index formula in \cite{CO} shows that the sum of other components has Maslov index 0. But on the other hand there is no holomorphic disk of Maslov index 0 with respect to the toric complex structure. Therefore the main component is the whole disk class $\beta$. It follows that when we deform the symplectic structures there is no disk bubble and the moduli space does not split. Similarly the complex structure is only deformed in a small neighborhood such that the change of energy can be also uniformly controlled. In conclusion, firstly we know that for each $t$ the mapping degree is well-defined and secondly those mapping degrees do not depend on $t$.
\end{proof}

We just use the hyperK\"ahler ALE metrics to get a family of symplectic forms which are Euclidean outside a large compact set then pick the almost complex structures. The pair $(\omega_{t}, J_{t})$ is not necessarily to be hyperK\"ahler.

This deformation invariance of $n_{\beta}$ gives us necessary ingredients to compute the disk potential function in the smoothing $\hat{X}$. We remark that how the open Gromov-Witten invariants can be related between orbifolds, resolution and smoothing is also studied in \cite{CCLT1}, \cite{CCLT2} \cite{L}, \cite{LLW} from the perspective of SYZ mirror symmetry and the perspective of the open crepant resolution conjecture. In \cite{CKO} the conifold transition is studied in some monotone symplectic manifold and families of nondisplaceable Lagrangian tori have also been found.

\subsection{An example: $X_{2}$}
In this subsection we study the concrete example of $X_{2}$. Other cases of semi-Fano toric surfaces are similar and we just list the result in Appendix 2.

Let $X_{2}$ be the semi-Fano toric surface corresponding to the following moment polytope.

$l_{1}: u_{1} \geq 0;$

$l_{2}: u_{2} \geq 0;$

$l_{3}: t_{1}+t_{2}+2t_{3}-u_{1}-u_{2} \geq 0;$

$l_{4}: t_{1}+t_{3}-u_{2} \geq 0;$

$l_{5}: t_{1}+u_{1}-u_{2} \geq 0.$

Here $t_{i}$ are positive K\"ahler parameters. We first fix some parameters then do degeneration. When $2t_{1}=t_{2}+2t_{3}$ the smoothing of the degenerate surface is monotone and we can get a monotone Lagrangian torus fiber. For example if we take $t_{1}=t_{2}=2, t_{3}=1$ then we get a toric semi-Fano surface $X_{2}$ of which the moment polytope is cut out by

$l_{1}: u_{1} \geq 0;$

$l_{2}: u_{2} \geq 0;$

$l_{3}: 6-u_{1}-u_{2} \geq 0;$

$l_{4}: 3-u_{2} \geq 0;$

$l_{5}: 2+u_{1}-u_{2} \geq 0.$

Self-intersection numbers are labelled near toric divisors in Figure 3. The -2-curve can be collapsed in a toric way by considering parallel translating the affine function $l_{4}: 4-\alpha-u_{2} \geq 0$ where the parameter $\alpha\in [0,1]$. The symplectic energy of the -2-curve is $2\pi\cdot 2\alpha$. For notational simplicity we just write the energy of the -2-curve to be $2\alpha$. This does not affect our final potential function in $\hat{X}_{2}$ since we are taking the limit $\alpha\rightarrow 0$. When $\alpha=0$ it corresponds to the orbifold moment polytope in Figure 3. For example if we want to compute the potential functions of all interior points where $u_{2}\leq 3$ then we choose $\alpha$ very small and deform a neighborhood of the -2-curve such that the symplectic and complex structure are unchanged inside the region where $u_{2}\leq 3.9$. Those numbers are not exactly computed or estimated but just to morally show that we can choose $\alpha$ very small to use Proposition 3.7.

Let $D_{i}$ be the toric divisor corresponding to $l_{i}=0$. The disk potential function was computed in \cite{CL} and we rewrite it in the notion of \cite{FOOO3} as follows. Let $u=(u_{1}, u_{2})$ be an interior point in the polytope and $L(u)$ be the corresponding torus fiber. Then we have that
\begin{equation}
\begin{split}
\mathfrak{PO}_{\mathfrak{b}}^{L(u)}(y_{1}, y_{2}) &=e^{a_{1}}y_{1}T^{u_{1}}+e^{a_{2}}y_{2}T^{u_{2}}+e^{a_{3}}y_{1}^{-1}y_{2}^{-1}T^{6-u_{1}-u_{2}}\\
&+(e^{a_{4}}+e^{a_{3}-a_{4}+a_{5}}T^{2\alpha})y_{2}^{-1}T^{4-\alpha-u_{2}}+e^{a_{5}}y_{1}y_{2}^{-1}T^{2+u_{1}-u_{2}}
\end{split}
\end{equation}
where $\mathfrak{b}=\sum_{i=1}^{5} a_{i}PD([D_{i}]) \in H^{2}(X_{2}; \Lambda_{0})$ is a bulk deformation.

\begin{figure}
  \begin{tikzpicture}
  \path [fill=lightgray, lightgray] (-6,0)--(-6,1)--(-5.5,1.5)--(-4.5,1.5)--(-3,0);
  \draw (-6,0)--(-6,1)--(-5.5,1.5)--(-4.5,1.5)--(-3,0)--(-6,0);
  \node at (-5,0.75) {$X_{2}$};
  \node [above] at (-5,1.5) {-2};
  \node [above left] at (-5.75,1.25) {-1};
  \node [left] at (-6,0.5) {-1};
  \node [below] at (-5,0) {1};
  \node [above right] at (-4,1) {0};
  \draw [->] (-2,1)--(-1,1);
  \path [fill=lightgray, lightgray] (0,0)--(0,1)--(1,2)--(3,0);
  \draw (0,0)--(0,1)--(1,2)--(3,0)--(0,0);
  \draw (1,1)--(1,1.5);
  \node at (1,0.5) {$X_{2,0}$};
  \draw [dashed] (0,1.5) --(2,1.5);
  \node [right] at (2,1.5) {$u_{2}=3$};
  \filldraw[black] (1,1) circle (1pt);
  \end{tikzpicture}
  \caption{Toric degeneration from $X_{2}$ to $X_{2,0}$.}
  \label{fig: Moment polytope 1}
\end{figure}
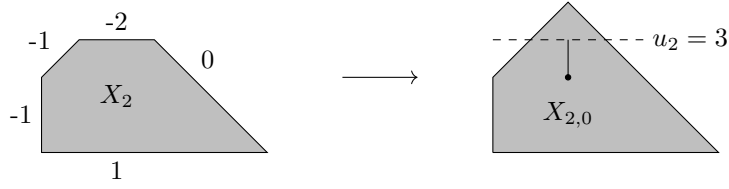

When $\alpha$ is small enough we deform the symplectic and complex structures in a neighborhood of the -2-curve to get $\hat{X}_{2}$. At the end of the deformation the -2-curve becomes a Lagrangian sphere which has symplectic energy zero. By Proposition 3.7 all the one-point open Gromov-Witten invariants are preserved. Hence for an interior point $u$ inside $u_{2}\leq 3$ the potential function of $L(u)$ is formally setting $\alpha=0$ in (3.2), which is
\begin{equation}
\begin{split}
\mathfrak{PO}_{\mathfrak{b}}^{L(u)}(y_{1}, y_{2}) &=e^{a_{1}}y_{1}T^{u_{1}}+e^{a_{2}}y_{2}T^{u_{2}}+e^{a_{3}}y_{1}^{-1}y_{2}^{-1}T^{6-u_{1}-u_{2}}\\
&+(e^{a_{4}}+e^{a_{3}-a_{4}+a_{5}})y_{2}^{-1}T^{4-u_{2}}+e^{a_{5}}y_{1}y_{2}^{-1}T^{2+u_{1}-u_{2}}.
\end{split}
\end{equation}

Next we consider the points on the segment $u_{1}=2, 2\leq u_{2}\leq 3$. When $u_{1}=u_{2}=2$ the energy of each term is 2 so we find that $L(u)=L(2, 2)$ is a monotone Lagrangian torus with minimal Maslov number 2. Let the bulk deformation be $\mathfrak{b}=aPD([D_{4}])$ then the potential function of $L(u)$ is
$$
T^{-2}\mathfrak{PO}_{\mathfrak{b}}^{L(u)}(y_{1}, y_{2})=y_{1}+y_{2}+y_{1}^{-1}y_{2}^{-1}+(e^{a}+e^{-a})y_{2}^{-1}+y_{1}y_{2}^{-1}.
$$
The critical points equations are
$$
\left\{
\begin{aligned}
1-y_{1}^{-2}y_{2}^{-1}+y_{2}^{-1} &=0\\
1-y_{1}^{-1}y_{2}^{-2}-(e^{a}+e^{-a})y_{2}^{-2}-y_{1}y_{2}^{-2} &=0
\end{aligned}
\right.\notag.
$$
By simplifying above equations we get
$$
\left\{
\begin{aligned}
y_{2}^{-1}(y_{1}^{-2}-1) &=1\\
y_{1}^{5}+(e^{a}+e^{-a}-1)y_{1}^{4}+y_{1}^{3}+2y_{1}^{2}-1 &=0
\end{aligned}
\right.\notag.
$$
Setting $e^{a}$ as a generic complex number we can solve $y_{1}$ and $y_{2}$. For example when
\begin{equation}
\quad e^{a}+e^{-a}=3
\end{equation}
there are 5 different solutions $(y_{1}, y_{2})\in \mathbb{C}^{2}$, which can be checked with the help of the software Mathematica. Therefore this monotone Lagrangian torus has nontrivial deformed Floer cohomology and it is nondisplaceable. Moreover the determinant of the Hessian matrix is
\begin{equation}
\begin{split}
detHess=& 3y_{1}^{-4}y_{2}^{-4}+(e^{a}+e^{-a})y_{1}^{-3}y_{2}^{-4}+6y_{1}^{-2}y_{2}^{-4}-y_{2}^{-4}\\
=& [3y_{1}^{-4}+4(e^{a}+e^{-a})y_{1}^{-3}+6y_{1}^{-2}-1]y_{2}^{-4}.
\end{split}
\end{equation}
We can check that it is nonzero at above 5 solutions $(y_{1}, y_{2})$ by just plugging in.

For the above monotone fiber $L(u)$, if we do not choose bulk deformation to perturb the potential function there are still critical points which shows that it is nondisplaceable. In that case the critical points equations become
$$
\left\{
\begin{aligned}
y_{2}^{-1}(y_{1}^{-2}-1) &=1\\
y_{1}^{5}+y_{1}^{4}+y_{1}^{3}+2y_{1}^{2}-1=(y_{1}+1)^{2}(y_{1}^{3}-y_{1}^{2}+2y_{1}-1) &=0
\end{aligned}
\right.\notag.
$$
The solution $y_{1}=-1$ of the second equation does not give a solution $y_{2}$ in the first equation. So we only have three different solutions and they are all nondegenerate. We perturb the potential function to get 5 nondegenerate critical points to make the deformed Kodaira-Spencer map an isomorphism, which will be used later to produce quasi-morphisms.

Next we consider other points on $u_{1}=2, 2\leq u_{2}\leq 3$ where the potential function of $L(u)$ is
$$
T^{u_{2}-4}\mathfrak{PO}_{\mathfrak{b}}^{L(u)}(y_{1}, y_{2})=y_{1}^{-1}y_{2}^{-1}+(e^{a}+e^{-a})y_{2}^{-1}+y_{1}y_{2}^{-1}+y_{1}T^{u_{2}-2}+y_{2}T^{2u_{2}-4}.
$$
The critical points equations are
$$
\left\{
\begin{aligned}
-y_{1}^{-2}y_{2}^{-1}+y_{2}^{-1}+T^{u_{2}-2} &=0\\
-y_{1}^{-1}y_{2}^{-2}-(e^{a}+e^{-a})y_{2}^{-2}-y_{1}y_{2}^{-2}+T^{2u_{2}-4} &=0
\end{aligned}
\right.\notag.
$$
By setting $y_{2}=1$ we simplify above equations to
$$
\left\{
\begin{aligned}
(1+T^{u_{2}-2})^{-\frac{1}{2}} &=y_{1}\\
e^{2a}+(y_{1}+y_{1}^{-1}-T^{2u_{2}-4})e^{a}+1 &=0
\end{aligned}
\right.\notag.
$$
Then we can solve $e^{a}\in \Lambda_{0}-\Lambda_{+}$ by the following lemma.

\begin{lemma}
Consider the polynomial equation
$$
X^{n}+a_{n-1}X^{n-1}+\cdots +a_{1}X+a_{0}=0
$$
where the coefficients $a_{n-1}, \cdots, a_{0}\in \Lambda_{0}$ and $0\neq a_{0}\in \Lambda_{0}- \Lambda_{+}$. Then it has at least one solution in $\Lambda_{0}- \Lambda_{+}$.
\end{lemma}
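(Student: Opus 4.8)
The plan is to combine the fact that $\Lambda$ is algebraically closed with an elementary Newton-polygon (valuation) estimate on the roots. First I would factor the monic polynomial $P(X)=X^{n}+a_{n-1}X^{n-1}+\cdots+a_{1}X+a_{0}$ completely over $\Lambda$, writing $P(X)=\prod_{j=1}^{n}(X-c_{j})$ with each $c_{j}\in\Lambda$; this is legitimate because the ground field is $\mathbb{C}$, so $\Lambda$ is algebraically closed (Appendix A in \cite{FOOO1}). Since $n\geq 1$ there is at least one root, and it suffices to show that every $c_{j}$ lies in $\Lambda_{0}-\Lambda_{+}$, i.e. that $v(c_{j})=0$ for all $j$.

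Next I would show that every root lies in $\Lambda_{0}$, i.e. $v(c_{j})\geq 0$. Suppose some root $c$ had $v(c)<0$. Rewriting the defining equation as $c^{n}=-\sum_{i=0}^{n-1}a_{i}c^{i}$, the left-hand side has valuation $n\,v(c)$, while each summand on the right satisfies $v(a_{i}c^{i})=v(a_{i})+i\,v(c)\geq i\,v(c)\geq (n-1)\,v(c)$, using $v(a_{i})\geq 0$ (as $a_{i}\in\Lambda_{0}$) and $v(c)<0$ (so the minimum over $0\leq i\leq n-1$ is attained at $i=n-1$). Hence $v\big(\sum_{i=0}^{n-1}a_{i}c^{i}\big)\geq (n-1)\,v(c)>n\,v(c)$, contradicting the equality with $v(c^{n})=n\,v(c)$. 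Therefore $v(c_{j})\geq 0$ for all $j$.

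Finally I would pin the valuations down exactly using the constant term. Comparing constant coefficients in $P(X)=\prod_{j}(X-c_{j})$ gives $\prod_{j=1}^{n}c_{j}=(-1)^{n}a_{0}$, hence $\sum_{j=1}^{n}v(c_{j})=v\big((-1)^{n}a_{0}\big)=v(a_{0})=0$, the last equality because $0\neq a_{0}\in\Lambda_{0}-\Lambda_{+}$ is a unit and so has valuation $0$. Since each $v(c_{j})\geq 0$ and they sum to $0$, every $v(c_{j})=0$, so each $c_{j}\in\Lambda_{0}-\Lambda_{+}$. In particular $c_{1}$ is a solution of the given equation lying in $\Lambda_{0}-\Lambda_{+}$, which is the claim.

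I do not expect a genuine obstacle here: the argument is just the standard fact that the Newton polygon of a monic polynomial over a complete valued field with unit constant term has all slopes equal to $0$. The only points requiring a little care are that the algebraic closedness of $\Lambda$ over $\mathbb{C}$ is being invoked correctly, and that the valuation inequalities are applied to a bona fide root so that the defining equation actually holds; I would present the self-contained version above rather than citing Newton polygons by name.
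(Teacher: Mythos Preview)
Your proof is correct and in fact establishes the stronger fact that \emph{every} root of $P$ lies in $\Lambda_{0}-\Lambda_{+}$, not just one. The overall strategy---use algebraic closedness of $\Lambda$ and then a valuation argument---matches the paper's, but the execution differs. The paper takes a single root, writes it as a quotient $f(T)/g(T)$ with at least one of $f,g$ a unit, clears denominators, and rules out the two mixed cases (``$f$ unit, $g$ not'' and ``$g$ unit, $f$ not'') by looking at the leading term $f^{n}$ and the constant term $a_{0}g^{n}$ respectively. Your argument replaces the second case by the global Vieta relation $\prod_{j}c_{j}=(-1)^{n}a_{0}$: once each $v(c_{j})\geq 0$ is known, the product identity forces $\sum_{j} v(c_{j})=0$ and hence each $v(c_{j})=0$. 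This is the Newton-polygon viewpoint you mention, and it is a bit cleaner than the paper's case split; conversely the paper's version avoids the full factorization and works root-by-root. Either way the content is the same elementary non-Archimedean estimate, so there is no gap.
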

\begin{proof}
Let $\Lambda$ be the fraction field of $\Lambda_{0}$. It is algebraically closed when the ground field of our Novikov rings are algebraically closed. In this note we always assume that the ground field is $\mathbb{C}$. Therefore the above polynomial equation has at least one solution $X=f(T)/g(T)$ in $\Lambda$, where $f(T), g(T)\in \Lambda_{0}$. We divide common powers of the $T$ variable and assume that at least one of $f(T)$ and $g(T)$ is in $\Lambda_{0}- \Lambda_{+}$.

By plugging $X=f(T)/g(T)$ in the equation we have
$$
f(T)^{n}+ \sum_{k=1}^{n-1} a_{k}g(T)^{n-k}f(T)^{k} +a_{0}g(T)^{n}=0.
$$
If both $f(T), g(T) \in \Lambda_{0}- \Lambda_{+}$ then $X=f(T)/g(T)\in \Lambda_{0}- \Lambda_{+}$ since elements in $\Lambda_{0}- \Lambda_{+}$ are units. If $f(T) \in \Lambda_{0}- \Lambda_{+}$ but $g(T)$ not then we have a contradiction since $f(T)^{n}\in \Lambda_{0}- \Lambda_{+}$ but the other two terms are in $\Lambda_{+}$. If $g(T) \in \Lambda_{0}- \Lambda_{+}$ but $f(T)$ not then we have a contradiction since $a_{0}g(T)^{n}\in \Lambda_{0}- \Lambda_{+}$ but the other two terms are in $\Lambda_{+}$. Therefore both $f(T), g(T) \in \Lambda_{0}- \Lambda_{+}$ and their quotient is in $\Lambda_{0}- \Lambda_{+}$.
\end{proof}

Therefore we get solutions $(y_{1}, y_{2}, a)$ to the critical points equation which show that all $L(u)$ with $u_{1}=2, 2\leq u_{2}\leq 3$ are nondisplaceable.

\begin{remark}
The above computation shows that there exists at least one solution $(y_{1}, y_{2}=1, e^{a})$. We may have more ``formal'' critical points since our equations are polynomials. However we can not make sure that these ``formal'' critical points are in $\Lambda_{0}-\Lambda_{+}$, which we need to take logarithm to get weak bounding cochains.
\end{remark}

The computation in the case of non-monotone $\hat{X}_{2}$ is essentially the same with the monotone case. We just list the result here to show that our method does not require the ambient symplectic manifold to be monotone.

We fix $t_{1}=t_{2}=t_{3}=2$ and redo the computation. In this case the moment polytope in toric degeneration is cut out by

$l_{1}: u_{1} \geq 0;$

$l_{2}: u_{2} \geq 0;$

$l_{3}: 8-u_{1}-u_{2} \geq 0;$

$l_{4}: 5-\alpha-u_{2} \geq 0;$

$l_{5}: 2+u_{1}-u_{2} \geq 0.$

By deforming a neighborhood of the -2-curve such that all the one-point open Gromov-Witten invariants of points inside $u_{2}\leq 3$ are preserved we get $\hat{X}_{2}$. The potential function of $L(u)$ in $\hat{X}_{2}$ is
\begin{equation}
\begin{split}
\mathfrak{PO}_{\mathfrak{b}}^{L(u)}(y_{1}, y_{2}) &=e^{a_{1}}y_{1}T^{u_{1}}+e^{a_{2}}y_{2}T^{u_{2}}+e^{a_{3}}y_{1}^{-1}y_{2}^{-1}T^{8-u_{1}-u_{2}}\\
&+(e^{a_{4}}+e^{a_{3}-a_{4}+a_{5}})y_{2}^{-1}T^{5-u_{2}}+e^{a_{5}}y_{1}y_{2}^{-1}T^{2+u_{1}-u_{2}}.
\end{split}
\end{equation}
We consider the points on the segment $u_{1}=3, 2.5\leq u_{2}\leq 3$. By the same argument in previous subsection and the help of Lemma 3.8 we get solutions to the critical points equations. This shows that all $L(u)$ with $u_{1}=3, 2.5\leq u_{2}\leq 3$ are nondisplaceable in $\hat{X}_{2}$. However there is no monotone fiber in this family.

Next we compute the image of the Kodaira-Spencer map in the monotone case and find quasi-morphisms induced by the bulk-deformed Floer cohomology, mainly following Part 5 in \cite{FOOO5}.

The Kodaira-Spencer map is a ring homomorphism from the big quantum cohomology ring to the Jacobian ring of the potential function with bulk deformations. In the case of toric fibers \cite{FOOO4} it is proved always to be an isomorphism. Since the smoothing symplectic structure is not toric we need to choose a particular bulk deformation such that the Kodaira-Spencer map is an isomorphism and the potential function is Morse, which implies that both the Jacobian ring and big quantum cohomology ring are semi-simple. We set $y_{i}=y_{i}T^{u_{i}}$ in (3.3) and get the potential function
\begin{equation}
\begin{split}
\mathfrak{P}_{\mathfrak{b}}(y_{1}, y_{2}) &=e^{a_{1}}y_{1}+e^{a_{2}}y_{2}+e^{a_{3}}y_{1}^{-1}y_{2}^{-1}T^{6}\\
&+(e^{a_{4}}+e^{a_{3}-a_{4}+a_{5}})y_{2}^{-1}T^{4}+e^{a_{5}}y_{1}y_{2}^{-1}T^{2}.
\end{split}
\end{equation}
which is independent of $u$. Then by differentiating (3.7) we get
$$
\mathfrak{ks}_{\mathfrak{b}}(PD([D_{i}]))=e^{a_{i}}y_{i}, \quad i=1,2;
$$
$$
\mathfrak{ks}_{\mathfrak{b}}(PD([D_{3}]))=e^{a_{3}}y_{1}^{-1}y_{2}^{-1}T^{6}+e^{a_{3}-a_{4}+a_{5}}y_{2}^{-1}T^{4};
$$
$$
\mathfrak{ks}_{\mathfrak{b}}(PD([D_{4}]))=(e^{a_{4}}-e^{a_{3}-a_{4}+a_{5}})y_{2}^{-1}T^{4};
$$
$$
\mathfrak{ks}_{\mathfrak{b}}(PD([D_{5}]))=e^{a_{3}-a_{4}+a_{5}}y_{2}^{-1}T^{4}+e^{a_{5}}y_{1}y_{2}^{-1}T^{2}.
$$
This can be seen as a definition of the bulk-deformed Kodaira-Spencer maps, for more details we refer to Part 5 in \cite{FOOO5}.

By the computation in the monotone case we have 5 nondegenerate critical points of potential function of the monotone fiber with respect to some bulk deformation $\mathfrak{b}$. This induces 5 nondegenerate critical points of (3.7). Then by Proposition 20.23 in \cite{FOOO5} we know that the Jacobian ring factorizes as a direct product. Moreover we can directly check that $e^{a_{4}}-e^{a_{3}-a_{4}+a_{5}}\neq 0$ hence the image of the $\mathfrak{b}$-deformed Kodaira-Spencer map is 5-dimensional. Therefore the $\mathfrak{b}$-deformed Kodaira-Spencer map is an isomorphism and the $\mathfrak{b}$-deformed big quantum cohomology ring of $\hat{X}_{2}$ is semi-simple.
$$
\begin{tikzcd}
QH_{\mathfrak{b}}(\hat{X}_{2}; \Lambda) \arrow[d, "i"] \arrow[r, "\mathfrak{ks}_{\mathfrak{b}}"] & Jac(\mathfrak{PO}_{\mathfrak{b}}; \Lambda)\cong \Lambda^{5}\\
HF(L(u); \mathfrak{b}, b)
\end{tikzcd}
$$
By a similar proof of Theorem 23.4 in \cite{FOOO5} there is an idempotent $e\in QH_{\mathfrak{b}}(\hat{X}_{2}; \Lambda)$ such that $i(e)$ is nonzero in $HF(L(u); \mathfrak{b}, b)$. Then by Theorem 18.8 in \cite{FOOO5} we get a Calabi quasi-morphism on $\widetilde{Ham}(\hat{X}_{2}, \omega)$, which is the universal cover of the Hamiltonian diffeomorphism group.

Next we study how critical points vary with respect to the change of bulk deformations to construct a one-dimensional family of quasi-morphisms on $\widetilde{Ham}(\hat{X}_{2}, \omega)$. The proof is based on Lemma 24.13 in \cite{FOOO5} and we sketch it as follows. When there is no bulk deformation then (3.7) has exactly 3 nonzero critical points and they are nondegenerate by previous computation. When the bulk deformation $\mathfrak{b}(a)=a=(0, 0, 0, a_{4}, 0)$ is generic in $\mathbb{C}$ then (3.7) has 5 nondegenerate critical points. This tells us some of the critical points go to either infinity or zero when the bulk deformation goes to zero. Then consider the solution space
$$
S_{0}=\lbrace (a; y_{1}, y_{2})\in \mathbb{C}^{3}\mid (y_{1}, y_{2})\quad\text{is a critical point of}\quad \mathfrak{PO}_{\mathfrak{b}(a)} \rbrace
$$
and its Zariski closure $S$ in $\mathbb{C}\times \mathbb{C}P^{1} \times \mathbb{C}P^{1}$. We have a projection $\pi: S\rightarrow \mathbb{C}$ onto the first parameter. A generic fiber of $\pi$ has 5 points and the fiber $\pi^{-1}(0)$ intersects $S_{0}$ at 3 points. Since these 3 critical points are simple we have that $\pi^{-1}(0)-S_{0}\neq \emptyset$ and the points in $\pi^{-1}(0)-S_{0}$ are either infinity or zero. We write down the Laurent series $(a(w); y_{1}(w), y_{2}(w))$ near these points. Then $(y_{1}(T^{\delta}), y_{2}(T^{\delta}))$ are critical points of $\mathfrak{PO}_{\mathfrak{b}(a(T^{\delta}))}$. Moreover for a given $a(T^{\delta})$ there are 5 choices of $y_{1}(T^{\delta}), y_{2}(T^{\delta}))$. In summary for each small $\delta\in (0, \epsilon)$ we have a quasi-morphism induced from the nontrivial Floer cohomology
$$
HF(L(2, 2+\delta); \mathfrak{b}(\delta), b=(y_{1}(\delta), y_{2}(\delta))).
$$
Note that all $L(2, 2+\delta)$ are mutually disjoint so we have a one-dimensional family of linearly independent quasi-morphisms.

\subsection{Relation with other's work}
As we have seen there are certain K\"ahler parameters of $X$ which represent the symplectic energy of different divisors. Those parameters can be carefully chosen such that $\hat{X}$ is a monotone symplectic manifold. Great work of the classification of monotone symplectic 4-manifolds has been done, see \cite{LL}, \cite{M1}, \cite{M2}, \cite{OO1}, \cite{OO2}, \cite{TC1}, \cite{TC2}, \cite{TC3} for an incomplete list. Thereby our surface $\hat{X}$ is isomorphic to one of the k-points blowup of $\mathbb{C}P^{2}$ with a monotone symplectic structure, also known as the symplectic del Pezzo surfaces. In particular, by counting Betti numbers there are following isomorphisms
\begin{enumerate}
\item $\hat{X}_{2}\cong Bl_{2}\mathbb{C}P^{2}$;
\item $\hat{X}_{3}, \hat{X}_{4}, \hat{X}_{5}\cong Bl_{3}\mathbb{C}P^{2}$;
\item $\hat{X}_{6}, \hat{X}_{7}\cong Bl_{4}\mathbb{C}P^{2}$;
\item $\hat{X}_{8}, \hat{X}_{9}, \hat{X}_{10}\cong Bl_{5}\mathbb{C}P^{2}$.
\end{enumerate}
In these cases we have a unique fiber $L(u)$ in each degeneration that is a monotone Lagrangian torus, which is one of Vianna's tori in \cite{V2}.

Vianna kindly points out the following relations between our toric degeneration pictures and his almost toric base diagrams in \cite{V2}.
\begin{enumerate}
\item $X_{3,0}$ corresponds to $A_5$ in Figure 16;
\item $X_{4,0}$ corresponds to $A_4$ in Figure 16;
\item $X_{5,0}$ corresponds to $A_3$ in Figure 16;
\item $X_{6,0}$ corresponds to $A_2$ in Figure 17;
\item $X_{7,0}$ corresponds to a mutation of $A_1$ in Figure 17, mutating one of the nodes in the cut with 3 nodes;
\item $X_{8,0}$ corresponds to $B_2$ in Figure 18;
\item $X_{9,0}$ corresponds to $B_1$ or $A_3$ in Figure 18;
\item $X_{10,0}$ corresponds to a mutation of $B_1$ in Figure 18, mutating one of the nodes in the cut with 3 nodes.
\end{enumerate}
The nondisplaceable fibers here are analogues of the $\beta_{i}$-type fibers in \cite{STV} where they discussed all possible Lagrangian tori of $\mathbb{C}P^{2}$ arising as fibers of almost toric fibrations.

Moreover, with K\"ahler parameters carefully chosen we have different degenerations to the same monotone symplectic manifold. For example $\hat{X}_{3}, \hat{X}_{4}, \hat{X}_{5}$ can be regarded as three different degenerations to $Bl_{3}\mathbb{C}P^{2}$, each of them giving a monotone Lagrangian torus $L_{3}, L_{4}, L_{5}$. These tori are different under Hamiltonian diffeomorphisms since they bound $11, 8 ,7$ families of holomorphic disks of Maslov index 2 respectively. Although $L_{3}, L_{4}, L_{5}$ have different potential functions they should have the same quantum period by Tonkonog's quantum period theorem in \cite{T}. Since the potential functions are explicitly known, one can easily check it numerically by a computer. Theoretically, those monotone tori are related under mutations and the potential functions can be obtained by Pascaleff-Tonkonog's wall-crossing formula in \cite{PT}.

\section{Lagrangian tori near a chain of Lagrangian spheres}
In this section we generalize the result in the case of del Pezzo surfaces to the case of a closed symplectic 4-manifold which contains a linear chain of Lagrangian spheres.

\subsection{Local computation in $A_{n}$-type Milnor fibers}
The existence of a one-parameter family of monotone Lagrangian tori $L_{n, \lambda}$ with nontrivial Floer cohomology in $A_{n}$-type Milnor fibers is known. Here we provide the computation of their potential functions with bulk deformation by using toric degeneration and admissible tuples in \cite{CL}. Also see \cite{LM} to count Maslov index 2 holomorphic disks by using Lefschetz fibrations and in \cite{AAK}, \cite{L} from the perspective of mirror symmetry.

The $A_{n}$-type singularity $X_{0}$ is a toric singularity with a moment polytope
$$
P=\lbrace l_{0}\geq 0\rbrace \cap \lbrace l_{n+1}\geq 0\rbrace
$$
where $l_{0}=u_{1}$ and $l_{n+1}=(n+1)u_{2}-nu_{1}$. We choose a toric resolution $X_{\alpha}$ such that it has a moment polytope
$$
P=\bigcap^{n+1}_{k=0}P_{k}=\bigcap^{n+1}_{k=0}\lbrace l_{k}\geq 0\rbrace
$$
defined by $n+2$ affine functions
\begin{enumerate}
\item $l_{0}= u_{1}$;
\item $l_{k}= ku_{2}-(k-1)u_{1}- \alpha_{k}, \quad k=1, \cdots, n$;
\item $l_{n+1}= (n+1)u_{2}-nu_{1}$.
\end{enumerate}
Let $D_{k}=\mu^{-1}\left(\partial P_{k}\right)$ be the toric divisors then $D_{1}, \cdots, D_{n-1}$ are exceptional spheres with first Chern number zero. Hence $X_{\alpha}$ is an open semi-Fano toric surface. Let $(u_{1}, u_{2})\in P$ and $L(u)$ be the Lagrangian torus fiber. We can compute the potential function of $L(u)$ in $X_{\alpha}$ by counting admissible tuples.

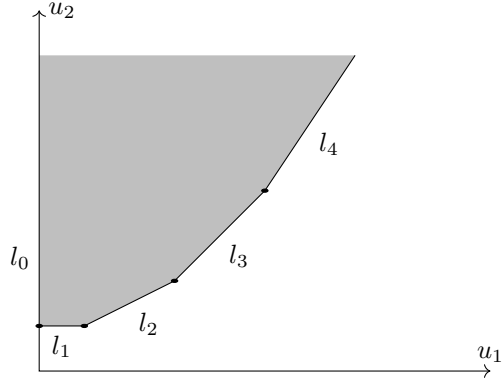
\begin{figure}
  \begin{tikzpicture}[xscale=0.6, yscale=0.3]
  \path [fill=lightgray, lightgray] (0,2)--(1,2)--(3,4)--(5,8)--(7,14)--(0,14)--(0,2);
  \draw [<->] (0,16) -- (0,0) -- (10,0);
  \draw (0,2)--(1,2)--(3,4)--(5,8)--(7,14);
  \node [right] at (0,16) {$u_{2}$};
  \node [above] at (10,0) {$u_{1}$};
  \node [left] at (0, 5) {$l_{0}$};
  \node [below] at (0.5, 2) {$l_{1}$};
  \node [below right] at (2, 3) {$l_{2}$};
  \node [below right] at (4, 6) {$l_{3}$};
  \node [below right] at (6, 11) {$l_{4}$};
  \filldraw[black] (0,2) circle (2pt);
  \filldraw[black] (1,2) circle (2pt);
  \filldraw[black] (3,4) circle (2pt);
  \filldraw[black] (5,8) circle (2pt);
  \end{tikzpicture}
  \caption{Moment polytope for $X_{\alpha}$ when $n=3$.}
  \label{fig: Toric resolution}
\end{figure}

\begin{definition}
We call a tuple $\lbrace k_{1}, \cdots, k_{m}\rbrace $ admissible with center $i$ if each $k_{l}$ is a nonnegative integer and
\begin{enumerate}
\item $k_{l}\leq k_{l+1}\leq k_{l}+1$ when $l<i$;
\item $k_{l}\geq k_{l+1}\geq k_{l}-1$ when $i\leq l$;
\item $k_{1}\leq 1$ and $k_{m}\leq 1$.
\end{enumerate}
\end{definition}

Then we have the following Theorem 1.2 from \cite{CL}.

\begin{theorem}
Let $X$ be a compact semi-Fano toric K$\ddot{a}$hler surface and $L$ be a Lagrangian fiber. Let $\beta \in \pi_{2}\left(X, L\right)$ be a class such that $\beta=\beta_{i} +\sum_{l}k_{l}D_{l}$, where $D_{l}$ is a chain of $-2$-curves in $X$ which are toric prime divisors. Then the one-point open Gromov-Witten invariant $n_{\beta}$ is either 1 or 0 according to whether the tuple $\lbrace k_{1}, \cdots, k_{m}\rbrace $ is admissible with center $i$ or not.
\end{theorem}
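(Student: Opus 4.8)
The plan is to follow the strategy of \cite{CL}: first classify the Maslov index $2$ disk classes with boundary on $L$, then reduce the count of $n_{\beta}$ to a purely local computation in a neighbourhood of the chain of $-2$-curves, and finally identify the resulting obstruction-bundle count with the combinatorial admissibility condition of Definition 4.1. For \textbf{Step 1}, equip $X$ with the toric complex structure $J_{0}$ and invoke the Cho--Oh classification \cite{CO}: every stable holomorphic disk of Maslov index $2$ with boundary on $L$ has principal component a basic disk in some class $\beta_{i}$, with possibly a genus $0$ configuration of rational curves attached, the total class being $\beta_{i}+A$ with $A$ effective and $c_{1}(A)=0$. On a semi-Fano toric surface the only effective classes with $c_{1}=0$ are the integral combinations $\sum_{l}k_{l}D_{l}$ of the $-2$-toric divisors, and these arrange into disjoint $A_{m}$-chains; since the bubble tree must be connected and must attach to $D_{i}$, it is supported on the single chain $D_{1}-\cdots-D_{m}$ containing $D_{i}$ (or $A=0$, in which case $n_{\beta_{i}}=1$ is classical). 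Thus $\mathcal{M}_{1}(\beta,J_{0})$ stratifies according to the combinatorial type of a tree of $\mathbb{P}^{1}$'s in $D_{1}\cup\cdots\cup D_{m}$ of total multiplicity $(k_{1},\dots,k_{m})$ glued to the principal disk at an interior point of $D_{i}$.

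For \textbf{Step 2}, the contribution of each stratum factors as the evaluation degree of the moduli of basic disks in class $\beta_{i}$, which is $1$, times the Euler number of the obstruction bundle over the compactified moduli space of the sphere configuration. Because this configuration lives in an arbitrarily small neighbourhood of the chain, which after an $SL(2;\mathbb{Z})$ transformation is the toric minimal resolution $X_{\alpha}$ of the $A_{m}$-singularity introduced above, the obstruction calculation is identical to the one in that local model. In particular $n_{\beta}$ depends only on $(k_{1},\dots,k_{m})$, on the position $i$, and on the fact that the two ends of the chain meet divisors of positive Chern number inside $X$; so one may assume $X=X_{\alpha}$.

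For \textbf{Step 3}, one computes in the local model the virtual count of connected trees of $\mathbb{P}^{1}$'s carrying multiplicities $k_{l}$ on $D_{l}$, attached to the principal disk at an interior point of $D_{i}$. Since the normal bundle of each $D_{l}\cong\mathbb{P}^{1}$ inside the chain is negative, such a sphere deforms only within the chain, and the moduli space has the expected dimension exactly when the multiplicity vector forms an admissible ``staircase'': heuristically, moving away from the center $i$ the multiplicities may stay equal or drop by $1$ at each step, which is conditions (1)--(2) of Definition 4.1, while the requirement that the outermost curves $D_{1},D_{m}$ attach to ambient divisors of positive Chern number forces $k_{1},k_{m}\le 1$, which is condition (3). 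When the vector is admissible the relevant moduli space is a single transverse point with trivial obstruction bundle, giving contribution $1$; otherwise either no such configuration exists or the Euler class of the obstruction bundle integrates to $0$, so the stratum contributes $0$. Summing over strata gives $n_{\beta}\in\{0,1\}$ exactly as stated. I would carry out this step by induction on the chain length $m$, equivalently on $\sum_{l}k_{l}$, peeling off one $\mathbb{P}^{1}$ component at a time by a deformation to the normal cone together with a gluing formula, and checking at each stage that admissibility of $(k_{1},\dots,k_{m})$ with center $i$ is equivalent to admissibility of the shorter vector obtained after removing that component.

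The hard part will be exactly the obstruction-bundle bookkeeping in Step 3: over the compactified moduli of bubble trees in a chain of $-2$-curves one must show that the virtual count collapses to $0$ or $1$ and pin the dichotomy precisely to Definition 4.1. Controlling the combinatorics of how a tree of total multiplicity $(k_{1},\dots,k_{m})$ can sit inside the chain, including degenerate trees in which several components collapse onto a single $D_{l}$, and matching the resulting Kuranishi/obstruction data with the admissibility conditions, is where the genuine work lies; the inductive degeneration argument is designed to manage it, but one must be careful with the base case and with ruling out spurious multiple covers and sign contributions. As an alternative route one could deduce the statement from an open mirror theorem comparing $n_{\beta}$ with closed Gromov--Witten invariants of $X$, but that trades the explicit local analysis for a substantially heavier global input.
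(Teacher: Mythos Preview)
The paper does not prove this theorem at all: it is stated as Theorem 1.2 of \cite{CL} and simply quoted, with the remark that ``the proof identifies the open Gromov--Witten invariants with some closed Gromov--Witten invariants of the chain of $-2$-curves, which has a local behavior. We refer to the proof of Theorem 1.2 in \cite{CL} for more details.'' So there is no in-paper argument to compare against beyond this one-sentence summary.

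That said, your sketch and the actual approach of \cite{CL} part ways precisely at Step~3. Your Steps~1--2 (Cho--Oh classification of Maslov~$2$ disks, then localisation to a neighbourhood of the $A_{m}$-chain) are indeed how \cite{CL} begins. But for the local count, \cite{CL} does \emph{not} run a direct obstruction-bundle analysis of bubble trees with induction on chain length; instead they prove an open/closed comparison that equates $n_{\beta}$ with a genus-$0$ one-point closed Gromov--Witten invariant supported on the chain, and then evaluate that closed invariant. This is exactly the route you mention at the end as an ``alternative,'' and it is the one the paper points to. Your preferred direct approach is not wrong in principle, but the ``hard part'' you flag---controlling all degenerate bubble trees, multiple covers, and obstruction contributions by a peeling induction---is genuinely hard and is not what \cite{CL} does; the open/closed identification is precisely the device that sidesteps this bookkeeping, at the cost of the heavier global input you note.
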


The above theorem works in the setting where $X$ is compact and it can be extended to our case where $X$ is open with a convex boundary. This is because the proof identifies the open Gromov-Witten invariants with some closed Gromov-Witten invariants of the chain of $-2$-curves, which has a local behavior. We refer to the proof of Theorem 1.2 in \cite{CL} for more details. Then by counting the admissible tuples we get the potential function in the toric resolution $X_{\alpha}$.

\begin{proposition}
Let $L(u)=L(u_{1}, u_{2})$ be a Lagrangian fiber of $X_{\alpha}$. Then the full potential function $\mathfrak{PO}^{L(u)}_{\alpha}\left(b\right)=\mathfrak{PO}_{\alpha}\left(u_{1}, u_{2}; y_{1}, y_{2} \right)$ is
\begin{equation}
\begin{split}
\mathfrak{PO}_{\alpha}\left( y_{1}, y_{2} \right)= &y_{1}T^{u_{1}}+y_{1}^{-n}y_{2}^{n+1}T^{-nu_{1}+(n+1)u_{2}}\\
&+\sum^{n}_{k=1}y_{1}^{1-k}y_{2}^{k}T^{(1-k)u_{1}+ku_{2}-\alpha_{k}}\left(1+H_{k}\left(T^{\alpha_{1}}, T^{\alpha_{2}}, \cdots, T^{\alpha_{n}}\right)\right).
\end{split}
\end{equation}
Here we set $y_{1}=e^{x_{1}}, y_{2}=e^{x_{2}}$ for $b=x_{1}e_{1}+x_{2}e_{2}\in H^{1}\left(L\left(u\right); \Lambda_{0}\right)$ and
$$
H_{k}\left(T^{\alpha_{1}}, T^{\alpha_{2}}, \cdots, T^{\alpha_{n}}\right)
$$
are polynomials in terms of $T^{\alpha_{1}}, T^{\alpha_{2}}, \cdots, T^{\alpha_{n}}$ corresponding to admissible tuples with center $k$.
\end{proposition}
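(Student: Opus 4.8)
The plan is to evaluate the defining sum (2.1) for $L(u)$ in $(X_{\alpha},J_{\alpha})$ with trivial bulk class, reading off the monomials and the $T$-exponents directly from the polytope $P$. First I would determine which relative classes $\beta$ with $\mu(\beta)=2$ contribute. Since $J_{\alpha}$ is the toric complex structure, Cho--Oh's analysis of holomorphic disks (\cite{CO}), together with the semi-Fano condition on $X_{\alpha}$, shows that every Maslov index $2$ disk with boundary on $L(u)$ is Fredholm regular and represents a class $\beta_{k}+E$, where $\beta_{k}$ is a basic disk class dual to a facet $\{l_{k}=0\}$, $0\le k\le n+1$, and $E$ is an effective sphere class of Chern number zero. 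In $X_{\alpha}$ such $E$ are exactly the non-negative integral combinations of the chain $D_{1},\dots,D_{n}$ of $(-2)$-curves, so the contributing classes are precisely
$$
\beta=\beta_{k}+\sum_{l=1}^{n}s_{l}D_{l},\qquad 0\le k\le n+1,\ \ s_{l}\in\mathbb{Z}_{\ge 0}.
$$

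Next I would assemble the data of (2.1). Because every $D_{l}$ is a sphere, $\partial\beta=\partial\beta_{k}$, which by Cho--Oh is the primitive inward conormal of the $k$-th facet; for the slopes in the definitions of $l_{0},\dots,l_{n+1}$ these conormals are $(1,0)$, $(1-k,k)$ for $1\le k\le n$, and $(-n,n+1)$, so $e^{b\cap\partial\beta}$ equals $y_{1}$, $y_{1}^{1-k}y_{2}^{k}$, and $y_{1}^{-n}y_{2}^{n+1}$ respectively. For the areas, $\omega(\beta_{k})$ is the affine distance $l_{k}(u)$, while $\omega(D_{l})$ is the explicit integral linear combination of $\alpha_{1},\dots,\alpha_{n}$ determined by $P$ (positive, by the concavity of $(\alpha_{j})$ that makes $X_{\alpha}$ a resolution), so $\omega(\beta)=l_{k}(u)+\sum_{l}s_{l}\,\omega(D_{l})$. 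Then Theorem 4.2 supplies the counts: $n_{\beta}=1$ when $\{s_{l}\}$ is admissible with center $k$ in the sense of Definition 4.1, and $0$ otherwise. For $k\in\{0,n+1\}$ the disk $\beta_{k}$ is disjoint from the chain, so no non-trivial $\beta_{k}+\sum s_{l}D_{l}$ is represented and these indices contribute the bare monomials $y_{1}T^{u_{1}}$ and $y_{1}^{-n}y_{2}^{n+1}T^{-nu_{1}+(n+1)u_{2}}$. For $1\le k\le n$ the aggregate contribution is $y_{1}^{1-k}y_{2}^{k}\,T^{l_{k}(u)}\sum T^{\sum_{l}s_{l}\omega(D_{l})}$, summed over the (finitely many) admissible tuples with center $k$; isolating the zero tuple produces the summand $1$ and the rest assemble into the polynomial $H_{k}(T^{\alpha_{1}},\dots,T^{\alpha_{n}})$ indexed by the non-trivial admissible tuples with center $k$. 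Substituting $l_{k}(u)=(1-k)u_{1}+ku_{2}-\alpha_{k}$ gives (4.1).

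The point that is not mere bookkeeping, and where I expect the real work, is the passage from Chan--Lau's setting to the present one: Theorem 4.2 and the Cho--Oh regularity statements are formulated for compact semi-Fano toric surfaces, whereas $X_{\alpha}$ is open with a convex boundary. As noted before the statement, Chan--Lau's proof identifies the relevant open invariants with closed Gromov--Witten invariants localized near the $(-2)$-chain; to invoke it here one must verify that, for $u$ an interior point of $P$, no disk or sphere bubble of the compactified moduli space $\mathcal{M}_{1}(\beta)$ can escape to the convex end --- an energy estimate of the kind used in the proof of Proposition 3.7 --- so that $\mathcal{M}_{1}(\beta)$ and its evaluation degree agree with those computed in any compact toric completion of $X_{\alpha}$ that contains $D_{1},\dots,D_{n}$. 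Granting this localization, everything above reduces to the combinatorics of admissible tuples.
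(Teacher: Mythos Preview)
Your proposal is correct and follows essentially the same route as the paper: identify the contributing Maslov-$2$ classes via Cho--Oh and the semi-Fano structure, read off boundaries and areas from the polytope data, apply Theorem~4.2 for the counts $n_\beta$, and assemble the sum. You are if anything more careful than the paper --- you flag the open-versus-compact localization explicitly (the paper waves this through in the paragraph before the proposition), and you correctly note that $\omega(D_l)$ is a linear combination of the $\alpha_j$ rather than a single $\alpha_l$ as the paper writes, a discrepancy that does not affect the stated form of $H_k$ or its later use as $\alpha\to 0$.
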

\begin{proof}
We calculate the potential function by counting Maslov index 2 holomorphic disks. Let $\beta_{k}$ be meridian classes, that is, $\beta_{k}\cap D_{j}=\delta_{kj}$. Since $X_{\alpha}$ is semi-Fano, Theorem 11.1 in \cite{FOOO1} shows that a general class $\beta$ of Maslov index 2 with $\mathcal{M}_{1}\left(\beta\right)\neq\emptyset$ is of the following type
$$
\beta=\beta_{k}, \quad k=0, n+1;
$$
or
$$
\beta=\beta_{k}+s_{1}D_{1}+s_{2}D_{2}+\cdots +s_{n}D_{n}, \quad k=1, \cdots, n; \quad s_{k}\in \mathbb{Z}_{\geq 0}.
$$

The potential function has the formula
$$
\mathfrak{PO}^{L}(b)=\sum_{\mu_{L}(\beta)=2} n_{\beta}T^{\omega (\beta)} y_{1}^{e_{1}\left(\partial\beta\right)}y_{2}^{e_{2}\left(\partial\beta\right)}
$$
where
$$
\omega (\beta_{i})=l_{i}\left(u\right); \quad i=0, \cdots, n+1;
$$
$$
\omega (D_{i})=\alpha_{i}; \quad i=1, \cdots, n;
$$
$$
e_{j}\left(\beta_{i}\right)=\partial l_{i}\big/\partial u_{j}; \quad i=0, \cdots, n+1; \quad j=1, 2.
$$
and $n_{\beta}$ is the one-point open Gromov-Witten invariant. The first two terms in (4.1) come from the classes $\beta_{0}$ and $\beta_{n+1}$. Other terms come from classes
$$
\beta=\beta_{k}+s_{1}D_{1}+s_{2}D_{2}+\cdots +s_{n}D_{n}
$$
where we apply Theorem 4.2 on admissible tuples. The number $n_{\beta}$ is either one or zero according to whether the tuple $\lbrace s_{1}, \cdots, s_{n}\rbrace$ is admissible or not. When all $s_{k}=0$ we have the term $1$ in $1+H_{k}\left(T^{\alpha_{1}}, T^{\alpha_{2}}, \cdots, T^{\alpha_{n}}\right)$. Otherwise we have a term
$$
T^{s_{1}\alpha_{1}+\cdots +s_{n}\alpha_{n}}
$$
in $1+H_{k}\left(T^{\alpha_{1}}, T^{\alpha_{2}}, \cdots, T^{\alpha_{n}}\right)$. For a fixed integer $n$ we always have a finite collection of admissible tuples hence $H_{k}\left(T^{\alpha_{1}}, T^{\alpha_{2}}, \cdots, T^{\alpha_{n}}\right)$ is a polynomial. This finishes our calculation in the minimal resolution setting.
\end{proof}

Given positive integers $n$ and $k$ one can easily list all the admissible tuples with center $k$ just by definition. And there are finitely many admissible tuples when $n$ is fixed. This enables us to use the same deformation technique in subsection 3.3 to compute the potential function of $L(u)$ in the smoothing. Here we obtain the smoothing $X_{0}\left(\epsilon\right)$ of $X_{0}$ by replacing a neighborhood of the origin by a proper Milnor fiber, similar to what we did in Section 3, see Figure 5.

\begin{proposition}
Let $L(u)=L(u_{1}, u_{2})$ be a Lagrangian fiber of $X_{0}\left(\epsilon\right)$ under the coordinate identification between $X_{0}\left(\epsilon\right)$ and $X_{\alpha}$. Then the potential function $\mathfrak{PO}^{L(u)}\left(b\right)$ is
\begin{equation}
\begin{split}
\mathfrak{PO}\left( u_{1}, u_{2}; y_{1}, y_{2} \right)= &y_{1}T^{u_{1}}+y_{1}^{-n}y_{2}^{n+1}T^{-nu_{1}+(n+1)u_{2}}\\
&+\sum^{n}_{k=1}y_{1}^{1-k}y_{2}^{k}T^{(1-k)u_{1}+ku_{2}}\left(1+H_{k}\right).
\end{split}
\end{equation}
Here we set $y_{1}=e^{x_{1}}, y_{2}=e^{x_{2}}$ for $b=x_{1}e_{1}+x_{2}e_{2}\in H^{1}\left(L\left(u\right); \Lambda_{0}\right)$ and
$$
H_{k}=\lim_{\alpha\rightarrow 0} H_{k}\left(T^{\alpha_{1}}, T^{\alpha_{2}}, \cdots, T^{\alpha_{n}}\right)
$$
are limits of the polynomials as the energy parameters $\alpha=(\alpha_{1}, \cdots, \alpha_{n})$ tend to zero.
\end{proposition}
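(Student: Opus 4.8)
The plan is to derive Proposition~4.4 from Proposition~4.3 together with the invariance of the one-point invariants $n_\beta$ under the degeneration of the chain of $-2$-curves, exactly as the potential of $\hat{X}_{2}$ was obtained from that of $X_{2}$ in Section~3. First I would set up the degeneration in the open setting: applying the construction of subsection~3.3 (Proposition~3.5 and Remark~3.6) to the open semi-Fano surface $X_\alpha$ in place of a compact one produces a one-parameter family $\{\omega_t\}_{t\in[0,1]}$ of symplectic forms on $X_\alpha$ with $\omega_0$ the toric form (so $\omega_0(D_i)=\alpha_i$), with $\omega_t(D_i)$ decreasing to $\omega_1(D_i)=0$, and with the deformation supported in a small neighborhood $U$ of the chain $D_1,\dots,D_n$. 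I would fix the interior point $u$ of the polytope so that $u\notin U$; then $L(u)$ is disjoint from $U$ and remains Lagrangian for every $\omega_t$, and $(X_\alpha,\omega_1)$ is, after the boundary rearrangement of subsection~4.1, precisely the smoothing $X_0(\epsilon)$ in which $D_1,\dots,D_n$ have become a chain of Lagrangian $2$-spheres.

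The second step is to run the invariance argument. The proof of Proposition~3.7 is local near the chain of $-2$-curves, and since Theorem~4.2 is already known to hold in the open convex setting (its proof identifies $n_\beta$ with local closed Gromov--Witten data of the chain), the same cobordism argument applies here: for $\alpha_1,\dots,\alpha_n$ small enough the evaluation map $ev:\mathcal{M}_{1}(\omega_t;L(u);\beta)\to L(u)$ has a well-defined mapping degree, independent of $t$, for every class $\beta$ of Maslov index $2$. Consequently the set of classes with $n_\beta\neq0$ is the same in $X_0(\epsilon)$ as in $X_\alpha$, namely $\{\beta_0,\beta_{n+1}\}\cup\{\beta_k+s_1D_1+\cdots+s_nD_n\mid k=1,\dots,n,\ \{s_1,\dots,s_n\}\text{ admissible with center }k\}$, every such $n_\beta$ equals $1$, and the boundary classes $\partial\beta$ — hence the monomials $y_1^{e_1(\partial\beta)}y_2^{e_2(\partial\beta)}$ — are unchanged, since $L(u)$ together with a neighborhood of it is untouched by the deformation.

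It then remains to record the symplectic areas in $X_0(\epsilon)=(X_\alpha,\omega_1)$. Because every $D_i$ is now Lagrangian we have $\omega_1(D_i)=0$, so $\omega_1(\beta_k+\sum_i s_iD_i)=\omega_1(\beta_k)$ and all the admissible classes with a fixed center $k$ collapse into a single monomial $y_1^{1-k}y_2^kT^{\omega_1(\beta_k)}$ whose coefficient is the number of admissible tuples with center $k$, that is $1+H_k$ with $H_k=H_k(1,\dots,1)=\lim_{\alpha\to0}H_k(T^{\alpha_1},\dots,T^{\alpha_n})$. Since $[\omega_1]-[\omega_0]$ is supported near the chain, $\omega_1(\beta_0)=l_0(u)=u_1$ and $\omega_1(\beta_{n+1})=l_{n+1}(u)=-nu_1+(n+1)u_2$ are unchanged, while $\omega_1(\beta_k)=l_k(u)+O(\alpha)=(1-k)u_1+ku_2+O(\alpha)$; letting $\alpha\to0$ (equivalently shrinking the inserted Milnor fiber) and substituting these data into $\mathfrak{PO}^{L(u)}(b)=\sum_{\mu(\beta)=2}n_\beta T^{\omega(\beta)}y_1^{e_1(\partial\beta)}y_2^{e_2(\partial\beta)}$ yields formula~(4.2); equivalently, (4.2) is the term-by-term $\alpha\to0$ limit of~(4.1), obtained by setting each $T^{\alpha_i}$ equal to $1$.

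The main obstacle is the second step: transplanting the no-bubbling/cobordism argument of Proposition~3.7 to the noncompact $X_\alpha$. One must show, along the family $\omega_t$, that a generic family of almost complex structures avoids sphere bubbles and disk bubbles of negative Maslov index by the dimension counts, and that Maslov-index-$0$ disk components do not appear — which, as in Proposition~3.7, follows by shrinking $U$ so that only a uniformly small amount of energy can escape into the deformed region, where the toric complex structure and Cho--Oh's Maslov formula exclude index-$0$ disks. One must also keep $\mathcal{M}_{1}(\omega_t;L(u);\beta)$ compact despite the open end of $X_\alpha$, which uses the convexity of $\partial X_\alpha$ together with a maximum principle to keep holomorphic curves in a fixed compact set. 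Once these points are settled the area bookkeeping above is routine.
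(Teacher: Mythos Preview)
Your proposal is correct and follows essentially the same approach as the paper's proof, which simply states that one connects the resolution and smoothing by a path of symplectic structures, produces a cobordism between the corresponding moduli spaces, uses the finiteness of admissible tuples together with an energy estimate to exclude Maslov index~$0$ disk bubbles, and thereby obtains the potential in the smoothing as the $\alpha\to 0$ limit of~(4.1). Your write-up is in fact more detailed than the paper's own proof, and your identification of the noncompactness issue (handled by convexity of $\partial X_\alpha$ and the maximum principle) is a point the paper leaves implicit.
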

\begin{proof}
The proof is parallel to the proof of the deformation invariance in subsection 3.3. We connect two symplectic structures between the resolution and the smoothing by a path. Then we produce a cobordism between corresponding moduli spaces of holomorphic disks. Using the fact that there are finitely many admissible tuples we can give an energy estimate argument to exclude disk bubbles of Maslov index 0. Then this cobordism gives us the invariance of mapping degrees when all energy parameters in the resolution are sufficiently small. Therefore the potential function in the smoothing is obtained as a limit of the potential function in the resolution.
\end{proof}

\begin{figure}
  \begin{tikzpicture}[xscale=0.8, yscale=1]
  \draw (-2,6) to [out=45,in=135] (2,6);
  \draw (-2,6)--(-7,1);
  \draw (2,6)--(7,1);
  \draw (-4,4) to [out=300,in=240] (4,4);
  \draw (-3,5) to [out=300,in=240] (3,5);
  \draw (-6,2) to [out=300,in=240] (6,2);
  \draw [very thick] (0,0.6)--(0,1.8);
  \draw (-1,5.5)--(1,5.5);
  \draw (-1.6,5)--(-0.8,5.6);
  \draw (0.8,5.6)--(1.6,5);
  \node [below] at (0,5) {vanishing cycles};
  \node [right] at (2,6) {deforming region};
  \node [right] at (2.5,5.5) {$X_{\alpha}\rightarrow M_{n, \epsilon, R}$};
  \node [below] at (0,3.2) {boundary deforming region};
  \node [below] at (3,1.8) {standard region};
  \node [below] at (0,0.6) {Lagrangian tori $L(u)$};
  \node [below] at (0,0.2) {parameterized by an interval};
  \end{tikzpicture}
  \caption{Smoothing of the $A_{n}$-type singularity.}
  \label{fig: Smoothing}
\end{figure}
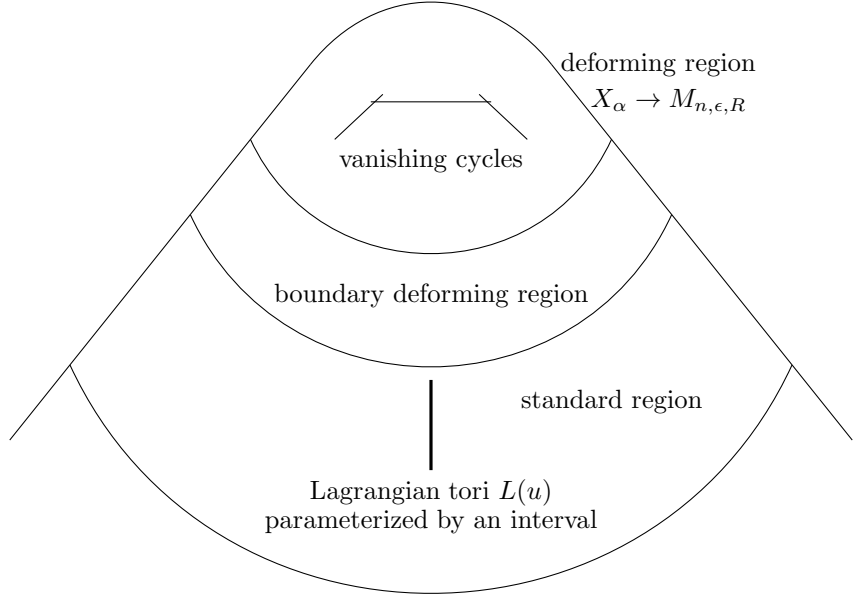

The coefficient $1+H_{k}$ is the number of admissible tuples of length $n$ with center $k$ hence it depends on both $n$ and $k$. Actually they are the binomial coefficients
$$
C(n, k)=\dfrac{n!}{(n-k)!k!}.
$$

\begin{lemma}
The number $1+H_{n, k}$ of admissible tuples of length $n$ with center $k$ is $C(n+1, k)$.
\end{lemma}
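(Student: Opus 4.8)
The plan is to translate Definition 4.1 into a count of pairs of lattice paths and evaluate it with Vandermonde's identity. Recall that an admissible tuple of length $n$ with center $k$ is a sequence $s_1,\dots,s_n$ of nonnegative integers whose consecutive differences satisfy $s_{l+1}-s_l\in\{0,1\}$ for $l<k$ and $s_l-s_{l+1}\in\{0,1\}$ for $k\le l$, and which satisfies $s_1\le 1$ and $s_n\le 1$. The first observation is that interior nonnegativity is automatic: on the range $1\le l\le k$ the sequence is bounded below by $s_1\ge 0$, and on $k\le l\le n$ by $s_n\ge 0$; hence, beyond the step conditions, the only genuine constraints are $s_1,s_n\in\{0,1\}$. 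This lets me cut each admissible tuple into a ``left half'' $(s_1,\dots,s_k)$ and a ``right half'' $(s_k,\dots,s_n)$ which, once their shared value $p:=s_k$ is fixed, may be chosen independently.

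Next I would count the halves for a fixed value $p$. A left half with $s_k=p$ is determined by the starting value $s_1\in\{0,1\}$ together with a choice of which of the $k-1$ consecutive steps are $+1$ (there must be exactly $p-s_1$ of them), giving $\binom{k-1}{p}+\binom{k-1}{p-1}=\binom{k}{p}$ possibilities. Symmetrically, a right half with $s_k=p$ is determined by the ending value $s_n\in\{0,1\}$ and a choice of which of the $n-k$ steps are $-1$, giving $\binom{n-k}{p}+\binom{n-k}{p-1}=\binom{n-k+1}{p}$ possibilities by Pascal's rule. Since the value $p=s_k$ partitions the set of admissible tuples and the two halves are chosen independently, the total number is
\[
1+H_{n,k}=\sum_{p\ge 0}\binom{k}{p}\binom{n-k+1}{p}.
\]

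Finally I would finish by rewriting $\binom{n-k+1}{p}=\binom{n-k+1}{(n-k+1)-p}$ and applying Vandermonde's identity, which yields $\sum_{p}\binom{k}{p}\binom{n-k+1}{(n-k+1)-p}=\binom{n+1}{n-k+1}=\binom{n+1}{k}=C(n+1,k)$, as claimed. As a consistency check, adding the contributions of the two ``end'' classes $\beta_0,\beta_{n+1}$ (each carrying a single disk class, i.e. $\binom{n+1}{0}=\binom{n+1}{n+1}=1$) gives $\sum_{k=0}^{n+1}\binom{n+1}{k}=2^{n+1}$ disk classes, matching the count quoted in the introduction.

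I do not expect a genuine obstacle here, as the argument is elementary combinatorics; the work is essentially in setting up the right encoding. The only care needed is the endpoint bookkeeping: confirming that $s_1\le 1$ and $s_n\le 1$ are the sole binding constraints (interior nonnegativity being automatic), that distinct triples $(p,\text{left half},\text{right half})$ produce distinct tuples and every admissible tuple arises this way, and that all the binomial identities above remain valid in the degenerate cases $k=1$, $k=n$, or $p\in\{0,k\}$ under the convention $\binom{m}{j}=0$ for $j<0$ or $j>m$.
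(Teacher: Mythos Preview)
Your argument is correct: the translation of admissible tuples into left and right monotone lattice paths glued at $s_k=p$, the counts $\binom{k}{p}$ and $\binom{n-k+1}{p}$ obtained from Pascal's rule, and the final application of Vandermonde's identity all go through cleanly, including in the boundary cases $k=1$, $k=n$.

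Your approach, however, is genuinely different from the paper's. The paper simply lists the numbers for $n=3,4$ and then asserts that the result follows by induction on $n$ via the Pascal recursion $C(n+1,k)=C(n,k-1)+C(n,k)$, without spelling out the bijection that realizes this recursion on admissible tuples. Your proof is a direct closed-form count: it explains structurally why binomial coefficients appear (each half is a lattice path with a free endpoint in $\{0,1\}$), and it avoids the need to set up and verify an inductive step. The paper's route is shorter to state but leaves the combinatorial content implicit; yours is more explicit and yields the pleasant bonus check that the total number of disk classes is $2^{n+1}$.
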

\begin{proof}
The first few numbers can be listed easily. For example in $A_{3}$-case we have
$$
1+H_{3, k}: \quad 1, 4, 6, 4, 1.
$$
And in in $A_{4}$-case we have
$$
1+H_{4, k}: \quad 1, 5, 10, 10, 5, 1.
$$
Then the rest of the proof follows the induction on $n$ with the binomial recursive formula $C(n+1, k)= C(n, k-1) + C(n, k)$.
\end{proof}

Using these integers we have the explicit expression of our potential function
\begin{equation}
\begin{split}
&\quad \mathfrak{PO}^{L(u)}\left(b\right)=\mathfrak{PO}\left(u_{1}, u_{2}; y_{1}, y_{2}\right)\\
&=\sum^{n+1}_{k=0}C(n+1, k) y_{1}^{1-k}y_{2}^{k}T^{(1-k)u_{1}+ku_{2}}.
\end{split}
\end{equation}

Next we look for its critical points when $u_{1}=u_{2}$. By setting partial derivatives to be zero we have
$$
0=\dfrac{\partial\mathfrak{PO}^{L(u)}}{\partial y_{1}}\left(y_{1}, y_{2}\right)=\sum^{n+1}_{k=0}C(n+1, k)(1-k) y_{1}^{-k}y_{2}^{k}T^{u_{1}}
$$
and
$$
0=\dfrac{\partial\mathfrak{PO}^{L(u)}}{\partial y_{2}}\left(y_{1}, y_{2}\right)=\sum^{n+1}_{k=0}C(n+1, k)k y_{1}^{1-k}y_{2}^{k-1}T^{u_{1}}.
$$
We claim that $y_{1}^{-1}y_{2}=-1$ is a solution to these two equations. For the second equation it follows from the fact that
$$
\left(\dfrac{d}{dx}(1+x)^{n+1}\right)\vert_{x=-1}=0.
$$
And the first one follows from the fact that
$$
(1+x)^{n+1}\vert_{x=-1}=0
$$
combined with the second one. Therefore when $u_{1}=u_{2}$ the potential functions always have critical points and the corresponding Lagrangian tori have nontrivial Floer cohomology. Note that the meridian classes $\beta_{k}$ generate $H_{2}\left(X_{0}\left(\epsilon\right), L(u)\right)$ and
$$
\mu_{L(u)}\left(\beta_{k}\right)=2, \quad \omega\left(\beta_{k}\right)=u_{1}=u_{2}, \quad \forall k=0, \cdots, n+1
$$
therefore when $u_{1}=u_{2}$ these tori are also monotone. The smoothing $X_{0}\left(\epsilon\right)$ is symplectomorphic to the Milnor fiber. In the following we do not distinguish these two and regard that the family of tori live inside the Milnor fiber.

As we can see from the solutions $y_{1}^{-1}y_{2}=-1$, the critical points here are degenerate and even non-isolated. In the global case we will use bulk deformation to perturb the potential function to deal with possible higher energy terms.

\subsection{Global computation in a symplectic 4-manifold}
Let $X$ be a symplectic 4-manifold which contains a linear chain of Lagrangian spheres. Then there exists a neighborhood which is symplectomorphic to the Milnor fiber. A family of our Lagrangian tori parameterized by a small interval naturally sits in this neighborhood. We will show that our Lagrangian tori are still nondisplaceable in $X$. First we consider the simple case that there is one Lagrangian sphere.

\begin{theorem}
(Corollary 1.2) Let $X$ be a closed symplectic 4-manifold which contains a Lagrangian 2-sphere. Then there is a one-parameter family of nondisplaceable Lagrangian tori in $X$ near this Lagrangian sphere.
\end{theorem}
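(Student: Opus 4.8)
The plan is to identify a Weinstein neighborhood of the Lagrangian sphere with an $A_{1}$-type Milnor fiber, import the monotone tori $L_{1,u}$ and their local potential function, and then argue that the disks coming from the rest of $X$ cannot destroy the critical point of the potential, or at least cannot do so modulo any fixed energy level. Concretely, by the Lagrangian neighborhood theorem together with the identification of the disk cotangent bundle $DT^{*}S^{2}$ with the $A_{1}$-Milnor fiber $M_{1,\epsilon}$, there is a symplectic embedding $M_{1,\epsilon}\hookrightarrow X$ onto a neighborhood $\nu(S)$ of $S$. Inside $\nu(S)$ we take the monotone family $L_{1,u}=L(u,u)$, with $u$ ranging over a bounded interval $(0,u_{0})$ whose length depends on the size of $\nu(S)$. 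Specializing the potential function (4.3) to $n=1$ and $u_{1}=u_{2}=u$, the potential function of $L_{1,u}$ computed inside $M_{1,\epsilon}$ is
$$\mathfrak{PO}^{\mathrm{loc}}_{u}(y_{1},y_{2})=T^{u}\bigl(y_{1}+2y_{2}+y_{1}^{-1}y_{2}^{2}\bigr)=T^{u}\,y_{1}^{-1}(y_{1}+y_{2})^{2},$$
whose critical locus is the curve $\{y_{1}+y_{2}=0\}$; in particular $b_{0}=(y_{1},y_{2})=(1,-1)$ is a critical point lying in $(\Lambda_{0}\setminus\Lambda_{+})^{2}$, hence logarithms to an honest weak bounding cochain.

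I would then compare this with the full potential function in $X$. For a generic compatible almost complex structure $L_{1,u}$ is weakly unobstructed, as recalled in Section 2, and one writes
$$\mathfrak{PO}^{L_{1,u}}_{X,\mathfrak{b}}(y_{1},y_{2})=\mathfrak{PO}^{\mathrm{loc}}_{u}(y_{1},y_{2})+R_{\mathfrak{b},u}(y_{1},y_{2}),$$
where $R_{\mathfrak{b},u}$ collects the contributions of Maslov-$2$ disks that leave $\nu(S)$. Reusing the cobordism argument of the proof of Proposition 3.7 — now comparing the count inside $\nu(S)$ with the ambient count in $X$ rather than comparing two symplectic forms — one checks that no sphere bubble and no nonpositive-Maslov disk bubble interferes, and a monotonicity/neck-length estimate forces every escaping disk to have symplectic area at least $u+\delta$ for some $\delta=\delta(\epsilon,u)>0$. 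Hence $v(R_{\mathfrak{b},u})\ge u+\delta$, and on any closed subinterval of $(0,u_{0})$ this $\delta$ is bounded below by a uniform $\delta_{\ast}>0$.

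Now fix an energy cutoff $N$. If $N\le u+\delta$ then $d\mathfrak{PO}^{L_{1,u}}_{X}(b_{0})\equiv d\mathfrak{PO}^{\mathrm{loc}}_{u}(b_{0})\equiv 0\pmod{T^{N}}$, so $b_{0}$ already works. For larger $N$ the idea is to use that $\mathfrak{PO}^{\mathrm{loc}}_{u}$ is Morse--Bott along its critical curve — the second derivative in the normal coordinate $v:=y_{1}+y_{2}$ equals $2T^{u}y_{1}^{-1}$, a unit times $T^{u}$ — so the non-archimedean implicit function theorem solves $\partial_{v}\mathfrak{PO}^{L_{1,u}}_{X,\mathfrak{b}}=0$ for $v=v(y_{1})$ with $v(v)\ge\delta$, leaving a single equation $G_{\mathfrak{b},u}(y_{1})=0$ of valuation $\ge u+\delta$ in one variable. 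One then chooses a bulk deformation $\mathfrak{b}=\mathfrak{b}(N)$ — using that $H_{2}(X;\mathbb{Z})$ has rank at least two for any closed symplectic $4$-manifold carrying a homologically essential Lagrangian sphere, so that bulk classes act nontrivially on the escaping disk classes — to adjust the leading coefficients of $G_{\mathfrak{b},u}$, and solves $G_{\mathfrak{b},u}(y_{1})\equiv 0\pmod{T^{N}}$ for some $y_{1}\in\Lambda_{0}\setminus\Lambda_{+}$ by the algebraic lemma of the appendix (a polynomial over $\Lambda_{0}$ with a unit constant term has a unit root) and its mod-$T^{N}$ refinement. This produces $b=b(N)$ with $d\mathfrak{PO}^{L_{1,u}}_{X,\mathfrak{b}}(b)\equiv 0\pmod{T^{N}}$, hence, by a mod-$T^{N}$ version of the spectral sequence behind Theorem 2.3, a nonzero $HF\bigl((L_{1,u};\mathfrak{b},b),(L_{1,u};\mathfrak{b},b)\bigr)$ after reducing the Novikov coefficients modulo $T^{N}$. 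Since a Hamiltonian $\phi$ displacing $L_{1,u}$ would force $HF(L_{1,u};\mathfrak{b},b)=0$, and this group vanishes modulo $T^{N}$ once $N$ exceeds twice the Hofer norm of $\phi$, the arbitrariness of $N$ gives nondisplaceability of $L_{1,u}$; running this uniformly over a subinterval of $(0,u_{0})$ yields the one-parameter family.

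The hard part is the passage to large $N$ in the previous step. Because the local potential is only Morse--Bott, its critical set is a whole curve, so there is no implicit function theorem in the tangential direction and one cannot simply perturb $b_{0}$ across the higher-energy correction $R_{\mathfrak{b},u}$; one is forced to play off the bulk parameters against the algebraic closedness of $\Lambda$, and must keep checking that the approximate solutions are genuine units of $\Lambda_{0}$ so that $\log$ produces weak bounding cochains. Making these choices uniformly in $u$ is presumably what forces restricting to a possibly smaller open interval of parameters, and the finer control needed to upgrade "nonzero modulo $T^{N}$" to honest nontriviality of the Floer cohomology is exactly what requires the rationality hypothesis in Theorem 1.2.
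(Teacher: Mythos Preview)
Your setup is right and the local computation is correct, but the ``hard part'' you flag is in fact not handled, and the paper avoids it by a different and much cleaner maneuver. The degeneracy of the local critical locus $\{y_{1}+y_{2}=0\}$ is real: after your Morse--Bott reduction the residual equation $G_{\mathfrak{b},u}(y_{1})=0$ has \emph{all} of its terms coming from escaping disks, about which you know nothing beyond an energy lower bound. You then propose to use global bulk classes to tweak the coefficients of $G$, but you have no computation of how those classes pair with the unknown escaping disk classes, so there is no reason the leading Laurent polynomial of $G$ should acquire a unit root (it could perfectly well be a single monomial). The appendix lemma needs an isolated zero of the leading system, and your leading system in $(y_{1},y_{2})$ simply does not have one.

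The paper's trick is to \emph{swap the roles of the weak bounding cochain and the bulk parameters}. One takes $\mathfrak{b}=v\,PD([S_{s}])+w\,PD([S_{n}])$, where $S_{s},S_{n}$ are the cotangent fibers over the poles (these are local cycles with explicitly known intersection numbers with the three meridian disks), so that the leading term becomes
\[
\bigl(e^{v}y_{1}+(1+e^{v+w})y_{2}+e^{w}y_{1}^{-1}y_{2}^{2}\bigr)T^{\lambda}.
\]
Now one \emph{fixes} $y_{1}=y_{2}=1$ and treats the critical point equations as equations in $(e^{v},e^{w})$. The leading system
\[
e^{v}-e^{w}=0,\qquad 1+e^{v+w}+2e^{w}=0
\]
has an \emph{isolated} solution $(e^{v},e^{w})=(-1,-1)$ of multiplicity~$2$, and now Lemma~5.1 applies directly: in the rational case one gets an honest solution in $(\Lambda_{0}\setminus\Lambda_{+})^{2}$, and in general one gets solutions modulo $T^{N}$ for every $N$, which is enough for nondisplaceability. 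The point is that the leading equations are now explicit polynomials with computable coefficients, because the bulk cycles are local; your global bulk classes act on the tail, where nothing is known.
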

\begin{proof}
Pick a neighborhood $U$ of the Lagrangian sphere which is symplectomorphic to the disk bundle of the cotangent bundle of a 2-sphere. Then we have a one-parameter family $L_{\lambda}=L_{1, \lambda}$ inside $U$ with $\lambda$ parameterized by an open interval. We study the potential function with bulk deformation of the torus $L_{\lambda}$ to show it is nondisplaceable in $X$.

The potential function of $L_{\lambda}$ counts Maslov index 2 holomorphic disks. We pick an almost complex structure which agrees the complex structure we use to compute the local potential function on $U$ and extend it to a generic one on $X$ such that the potential function is well-defined. Then the holomorphic disks of lowest energy are explicitly known due to the local computation.

Let $\mathfrak{b}=v PD([S_{s}])+ w PD([S_{n}])$ be a bulk deformation. Here $S_{s}$ is the fiber at south pole and $S_{n}$ is the fiber at north pole. The boundary of a fiber of the disk cotangent bundle is a torsion element in the boundary of the disk cotangent bundle. So both $S_{s}$ and $S_{n}$ represent rational cycles in $X$, which can be used as bulk deformations. We remark that when their boundaries are contractible in $X-U$ then we can regard them as integral cycles in $X$. Therefore by (2.1) the whole potential function is
$$
(e^{v}y_{1}+ (1+e^{v+w})y_{2}+ e^{w}y_{1}^{-1}y_{2}^{2})T^{\lambda} +P(y_{1}, y_{2}; v, w)T^{\mu}, \quad \lambda<\mu.
$$
where $P(y_{1}, y_{2}; v, w)$ come from the contribution of high energy disks. Setting $y_{1}=y_{2}=1$ then the critical point equations are
$$
\left\{
\begin{aligned}
e^{v}-e^{w}+ P_{1}(v, w)T^{\mu-\lambda} &=0\\
1+e^{v+w}+2e^{w}- P_{2}(v, w)T^{\mu-\lambda} &=0
\end{aligned}
\right.\notag
$$
The leading order term equations are
$$
\left\{
\begin{aligned}
e^{v}-e^{w} &=0\\
1+e^{v+w}+2e^{w} &=0
\end{aligned}
\right.\notag
$$
which has an isolated solution at $(-1, -1)$ of multiplicity 2.

To find solutions of the whole critical point equations we discuss in two cases.
\begin{enumerate}
\item If the symplectic form on $X$ is rational then all the energy parameters lie in a finitely generated semigroup in $\mathbb{R}$ hence Lemma 5.1 tells us there is always a solution in $\Lambda_{0}^{2}$ near $(-1, -1)$ to above equations, in the sense of non-Archimedean topology. Therefore we have a solution in $(\Lambda_{0}-\Lambda_{+})^{2}$ which shows that the deformed Floer cohomology is nontrivial.
\item In general the above equations have solutions if the tails $P_{1}, P_{2}$ have finite length, also by Lemma 5.1. Therefore we can truncate those equations to get solutions modulo large power. This shows that the deformed Floer cohomology is nonzero modulo any large energy hence our tori are nondisplaceable. But whether the full Floer cohomology is nontrivial or not is not known.
\end{enumerate}
By the above discussion we complete the proof.
\end{proof}

Next we study the general $A_{n}$-chain case, which is of the same spirit but combinatorially harder.

\begin{theorem}
Let $X$ be a closed symplectic 4-manifold which contains a linear chain of Lagrangian 2-spheres. Consider the Lagrangian embedding
$$
L_{n, \lambda}\hookrightarrow M_{n, \epsilon}\subset X
$$
then $L_{n, \lambda}$ is nondisplaceable in $X$ for all $\lambda$ in a possibly smaller open interval.
\end{theorem}
\begin{proof}
Let $D_{1}, \cdots, D_{n}$ be a linear chain of Lagrangian spheres with $n\geq 2$. We consider the bulk deformation by
$$
\mathfrak{b}=v PD(D_{1})+w PD(D_{n})\in H^{2}\left(X; \Lambda_{0}\right).
$$
Then by (2.1) and the local computation the leading terms of the full potential function with bulk deformation is
\begin{equation}
\mathfrak{PO}_{\mathfrak{b}, 0}\left(y_{1}, y_{2}\right)= y_{1}+\sum_{k=1}^{n}P_{k}\left(e^{v}, e^{w}\right)y_{1}^{1-k}y_{2}^{k}+ y_{1}^{-n}y_{2}^{n+1}
\end{equation}
where $P_{k}(e^{v}, e^{w})$ are Laurent polynomials. Since all terms have the same energy we omit the energy parameter here. The full potential function will be the leading terms plus higher energy terms. We follow the same idea in Theorem 4.6 to show the existence of critical points of the full potential function. Setting the partial derivatives equal to zero we get
\begin{equation}
0= \dfrac{\partial\mathfrak{PO}_{\mathfrak{b},0}}{\partial y_{1}}\left(y_{1}, y_{2}\right)= 1+\sum^{n}_{k=1}P_{k}(e^{v}, e^{w})(1-k)y_{1}^{-k}y_{2}^{k}-ny_{1}^{-n-1}y_{2}^{n+1}
\end{equation}
and
\begin{equation}
0=\dfrac{\partial\mathfrak{PO}_{\mathfrak{b},0}}{\partial y_{2}}\left(y_{1}, y_{2}\right)= \sum^{n}_{k=1}P_{k}(e^{v}, e^{w})ky_{1}^{1-k}y_{2}^{k-1}+(n+1)y_{1}^{-n}y_{2}^{n}.
\end{equation}

By the following Proposition 4.8 we know that for generic complex numbers $y_{1}, y_{2}$ the above critical point equation for the leading terms of the full potential function has nonzero isolated solutions $e^{v}, e^{w}$. Then by the same argument in Theorem 4.6 we complete the proof.
\end{proof}

We analyze the solution of the critical point equation for the leading terms of the full potential function. To find the critical points we give an algebraic argument without knowing all $P_{k}(e^{v}, e^{w})$ explicitly.

\begin{proposition}
With the same notations in Theorem 4.7, for generic complex numbers $y_{1}, y_{2}$ the critical point equation for the leading terms of the full potential function has nonzero isolated solutions $e^{v}, e^{w}$.
\end{proposition}
\begin{proof}
We assume that $n\geq 4$. When $n=2, 3$ one can check the computation directly as the case in Theorem 4.6, where $n=1$.

First we discuss some intersection number formulas to analyze $P_{k}(e^{v}, e^{w})$. Let $\beta_{1},\cdots, \beta_{n}$ be meridian classes corresponding the $A_{n}$-chain $D_{1}, \cdots, D_{n}$. For a disk class $\beta=\beta_{k}, k\neq 1, n$ we have
$$
D_{1}\cap \beta= D_{n}\cap \beta =0.
$$
Then consider a disk class
$$
\beta=\beta_{k}+s_{1}D_{1}+s_{2}D_{2}+\cdots +s_{n}D_{n}
$$
with nontrivial one-point open Gromov-Witten invariant. When $k=1$ we have that $s_{n}\leq \cdots \leq s_{2}\leq s_{1}\leq 1$ by the definition of admissible tuples. Hence we have
$$
D_{1}\cap \beta=\left\{
\begin{aligned}
 1, &\quad s_{1}=0\\
 -1, &\quad s_{1}=1, \quad s_{2}=0\\
 0, &\quad s_{1}=1, \quad s_{2}=1
\end{aligned}
\right.\notag
$$
and
$$
D_{n}\cap \beta=\left\{
\begin{aligned}
 0, &\quad s_{n-1}=0\\
 1, &\quad s_{n-1}=1, \quad s_{n}=0\\
 -1, &\quad s_{n-1}=1, \quad s_{n}=1
\end{aligned}
\right.\notag
$$
And symmetrically when $k=n$ we have
$$
D_{1}\cap \beta=\left\{
\begin{aligned}
 0, &\quad s_{2}=0\\
 1, &\quad s_{2}=1, \quad s_{1}=0\\
 -1, &\quad s_{2}=1, \quad s_{2}=1
\end{aligned}
\right.\notag
$$
and
$$
D_{n}\cap \beta=\left\{
\begin{aligned}
 1, &\quad s_{n}=0\\
 -1, &\quad s_{n}=1, \quad s_{n-1}=0\\
 0, &\quad s_{n}=1, \quad s_{n-1}=1
\end{aligned}
\right.\notag
$$
Lastly when $k\neq 1, n$ we have
$$
D_{1}\cap \beta=\left\{
\begin{aligned}
 0, &\quad s_{2}=0\\
 1, &\quad s_{2}=1, \quad s_{1}=0\\
 -1, &\quad s_{2}=1, \quad s_{1}=1\\
 0, &\quad s_{2}=2, \quad s_{1}=1
\end{aligned}
\right.\notag
$$
and
$$
D_{n}\cap \beta=\left\{
\begin{aligned}
 0, &\quad s_{n-1}=0\\
 1, &\quad s_{n-1}=1, \quad s_{n}=0\\
 -1, &\quad s_{n-1}=1, \quad s_{n}=1\\
 0, &\quad s_{n-1}=2, \quad s_{n}=1
\end{aligned}
\right.\notag
$$
Therefore $e^{v}, e^{w}$ only has degree one and negative one terms in $P_{k}(e^{v}, e^{w})$. That is, there are 9 possible terms
$$
e^{v}, e^{w}, e^{-v}, e^{-w}, e^{v+w}, e^{v-w}, e^{-v+w}, e^{-v-w}, constants
$$
and they all have nonnegative integer coefficients. We multiply $-e^{v+w}$ to (4.5) and multiply $e^{v+w}$ to (4.6) to get two polynomial equations
\begin{equation}
0= -e^{v+w}+e^{v+w} \sum^{n}_{k=1}P_{k}(e^{v}, e^{w})(k-1)y_{1}^{-k}y_{2}^{k}+ny_{1}^{-n-1}y_{2}^{n+1}e^{v+w}
\end{equation}
and
\begin{equation}
0=e^{v+w} \sum^{n}_{k=1}P_{k}(e^{v}, e^{w})ky_{1}^{1-k}y_{2}^{k-1}+(n+1)y_{1}^{-n}y_{2}^{n}e^{v+w}.
\end{equation}
We set $e^{v}=X$ and $e^{w}=Y$ and get
\begin{equation}
\left\{
\begin{aligned}
X^{2}(AY^{2}+BY+C)+X(DY^{2}+EY+F)+GY^{2}+HY+I &= 0\\
X^{2}(A'Y^{2}+B'Y+C')+X(D'Y^{2}+E'Y+F')+G'Y^{2}+H'Y+I' &= 0
\end{aligned}
\right.
\end{equation}
where $A, B, C, D, E, F, G, H, I, A', B', C, D, F', G', H', I'$ are nonconstant elements in $\mathbb{Z}[y_{1}, y_{2}, y_{1}^{-1}, y_{2}^{-1}]$.

By root formula we solve for $X$ from the first equation in term of $Y$ then plug in the second equation. We choose one of the root
$$
X=\dfrac{-(DY^{2}+EY+F)-\Delta}{2(AY^{2}+BY+C)}
$$
and get
\begin{equation}
\begin{split}
& [(DY^{2}+EY+F)+\Delta]^{2}(A'Y^{2}+B'Y+C')\\
+ & 2[-(DY^{2}+EY+F)-\Delta](D'Y^{2}+E'Y+F')(AY^{2}+BY+C)\\
+ & 4(G'Y^{2}+H'Y+I')(AY^{2}+BY+C)^{2}=0
\end{split}
\end{equation}
where
$$
\Delta =\sqrt{(DY^{2}+EY+F)^{2}-4(AY^{2}+BY+C)(GY^{2}+HY+I)}.
$$
After rearranging (4.10) we want to get a polynomial in $Y$ with nonzero solutions. So we need to check that the coefficients of the highest order term and the constant term are nonzero. By rearranging (4.10) we have that
\begin{equation}
\begin{split}
& [(DY^{2}+EY+F)^{2}+\Delta^{2}](A'Y^{2}+B'Y+C')\\
-& 2(DY^{2}+EY+F)(D'Y^{2}+E'Y+F')(AY^{2}+BY+C)\\
+& 4(G'Y^{2}+H'Y+I')(AY^{2}+BY+C)^{2}=\\
2& \Delta[(D'Y^{2}+E'Y+F')(AY^{2}+BY+C)-(DY^{2}+EY+F)(A'Y^{2}+B'Y+C')].
\end{split}
\end{equation}
Then we square both side of (4.11) to get a polynomial in $Y$. By direct calculation we know that the coefficient of the highest order term is
$$
16[A^{2}(DA'-D'A)(DG'-D'G)+A^{2}(G'A-A'G)^{2}]
$$
which depends on $A, A', D, D', G, G'$. Note that $A, A'$ are the coefficients of $e^{2v+2w}$ in (4.7) and (4.8), $D, D'$ are the coefficients of $e^{v+2w}$ and $G, G'$ the coefficients of $e^{2w}$. Next we calculate them out explicitly.

By above intersection number formulas we have that when $k=1$
$$
P_{1}(e^{v}, e^{w})=e^{v}+e^{-v}+e^{w}+e^{-w}+n-3
$$
and symmetrically when $k=n$
$$
P_{n}(e^{v}, e^{w})=e^{v}+e^{-v}+e^{w}+e^{-w}+n-3.
$$
So $P_{1}(e^{v}, e^{w})$ and $P_{n}(e^{v}, e^{w})$ only contribute to $D, D'$. When $2\leq k\leq n-1$ the coefficient of $e^{v+w}$ in $P_{k}(e^{v}, e^{w})$ is the number of admissible tuples of length $n$ with center $k$ satisfying
$$
s_{1}=0, s_{2}=1, s_{n-1}=1, s_{n}=0
$$
by above intersection number formulas. This number is $C(n-3, k-2)$. Similarly the coefficient of $e^{-v+w}$ in $P_{k}(e^{v}, e^{w})$ is the number of admissible tuples of length $n$ with center $k$ satisfying
$$
s_{1}=1, s_{2}=1, s_{n-1}=1, s_{n}=0.
$$
This number is $C(n-3, k-2)$. The third case is the coefficient of $e^{w}$ in $P_{k}(e^{v}, e^{w})$, which is the number of admissible tuples of length $n$ with center $k$ satisfying
$$
s_{2}=0, s_{n-1}=1, s_{n}=0 \quad \text{or}\quad s_{1}=1, s_{2}=2, s_{n-1}=1, s_{n}=0.
$$
When $k=1$ or $k=n$ this number is 1 as we have the explicit expressions for $P_{1}(e^{v}, e^{w})$ and $P_{k}(e^{v}, e^{w})$. When $k=2, n-1$ this number is $n-3$. When $3\leq k\leq n-2$ this number is $p_{n, k} :=C(n-3, k-3)+C(n-2, k-1)-C(n-3, k-2)$ where $C(n-3, k-3)$ is the number satisfying the first condition and $C(n-2, k-1)-C(n-3, k-2)$ is the number satisfying the second condition.

Therefore we have that
\begin{equation}
\begin{split}
A &=\sum_{k=2}^{n-1}C(n-3, k-2)(k-1)y_{1}^{-k}y_{2}^{k},\\
A' &=\sum_{k=2}^{n-1}C(n-3, k-2)k y_{1}^{1-k}y_{2}^{k-1},\\
D &=(n-3)[y_{1}^{-2}y_{2}^{2}+(n-2)y_{1}^{-n+1}y_{2}^{n-1}]\\
&+ \sum_{k=3}^{n-2}p_{n, k}(k-1)y_{1}^{-k}y_{2}^{k}+ (n-1)y_{1}^{-n}y_{2}^{n},\\
D' &=1+ (n-3)[2y_{1}^{-1}y_{2}+(n-1)y_{1}^{2-n}y_{2}^{n-2}]\\
&+ \sum_{k=3}^{n-2}p_{n, k}ky_{1}^{1-k}y_{2}^{k-1}+ ny_{1}^{1-n}y_{2}^{n-1},\\
G &= \sum_{k=2}^{n-1}C(n-3, k-2)(k-1)y_{1}^{-k}y_{2}^{k},\\
G' &=\sum_{k=2}^{n-1}C(n-3, k-2)ky_{1}^{1-k}y_{2}^{k-1}.
\end{split}
\end{equation}
In particular $A=G$ and $A'=G'$. This shows that the coefficient of the highest order term is a nontrivial element in $\mathbb{Z}[y_{1}, y_{2}, y_{1}^{-1}, y_{2}^{-1}]$. We denote it by $f_{1}(y_{1}, y_{2}, y_{1}^{-1}, y_{2}^{-1})$. Next we redo the above process but first solve for $Y$ by root formula involving $X$. By the same calculation we get a nontrivial element $f_{2}(y_{1}, y_{2}, y_{1}^{-1}, y_{2}^{-1})$. We can choose suitable $y_{1}, y_{2}$ such that all of $f_{1}, f_{2}$ are nonzero complex numbers. By a similar computation we can check that the constant term in (4.11) is nonzero for generic $y_{1}, y_{2}$.

In conclusion we get nonzero isolated solutions $\left(e^{v}, e^{w}\right)=(X, Y)$ of finite multiplicity with respect to previous choices of $y_{1}, y_{2}$.
\end{proof}

The above proof only gives us the existence of critical points but we do not know whether they are Morse or not. In particular we do not know that the big quantum cohomology with bulk deformation is semi-simple or not. Therefore we can not use the same method in Section 3 to produce quasi-morphisms. As Weiwei Wu pointed to us, one may combine the classification results of symplectic structures on 4-manifolds with the gluing method in \cite{WU} to analyze more carefully on the higher energy terms of above potential functions, especially when $X$ is not rational nor ruled. This approach will be more geometric and helpful to find new quasi-morphisms. We leave it for future study.

\section{Appendix 1: an algebraic lemma}
We provide an algebraic lemma with which we can show the existence of solutions of critical point equations.

\begin{lemma}
Let $F=(f_{1}(x,y), f_{2}(x,y))$ be a map from $\mathbb{C}^{2}$ to $\mathbb{C}^{2}$ where $f_{i}(x,y)$ are polynomials in $x,y$ with complex coefficients. Suppose that $F$ has an isolated zero in $\mathbb{C}^{2}$ then the system of equations
$$
\left\{
\begin{aligned}
f_{1}(x,y)+g_{1}(x,y;T) &=0\\
f_{2}(x,y)+g_{2}(x,y;T) &=0
\end{aligned}
\right.
$$
have solutions in $\Lambda_{0}^{2}$ near the zero of $F$ in the non-Archimedean topology. Here $g_{i}(x,y;T)$ are of the following form
$$
g_{i}(x,y;T)=\sum_{k=1}^{\infty} c_{i,k}x^{l_{i,k}}y^{j_{i,k}}T^{\lambda_{k}}
$$
where
$$
c_{i,k}\in \mathbb{C},\quad l_{i,k},j_{i,k}\in \mathbb{Z},\quad 0<\lambda_{k}\leq \lambda_{k+1}, \quad \lim_{k\rightarrow \infty}\lambda_{k}=+\infty
$$
and $\lambda_{k}$ lies in a finitely generated additive semigroup $\Sigma$ in $\mathbb{R}$.
\end{lemma}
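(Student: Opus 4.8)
\emph{Proof strategy.} The plan is to view $f_i+g_i=0$ as a non-Archimedean perturbation of the polynomial system $f_i=0$ and to produce a solution near the given isolated zero $p=(x_0,y_0)$ of $F$. In the situations where the lemma is applied one has $x_0y_0\neq 0$, which I assume; writing $x=x_0+\xi$, $y=y_0+\eta$ with $\xi,\eta\in\Lambda_+$, every Laurent monomial $x^{l}y^{j}$ becomes a convergent complex power series in $\xi,\eta$, and since $\lambda_k\to+\infty$ the series $g_i(x,y;T)$ defines an element of $\Lambda_+$ for such $\xi,\eta$. If $p$ is a simple zero, so $DF(p)\in \mathrm{GL}_2(\mathbb{C})$, the translated system reads $(\xi,\eta)^{\top}=-DF(p)^{-1}\bigl(Q(\xi,\eta)+G(x_0+\xi,y_0+\eta;T)\bigr)$ with $Q$ the part of $F$ of order $\ge 2$; since $Q$ vanishes to second order and $G$ has $T$-valuation $\ge\lambda_1>0$, the right-hand side contracts a small closed non-Archimedean ball in $\Lambda_+^2$ into itself, so by completeness of $\Lambda_0$ it has a fixed point, which is the desired solution.

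The real case is a degenerate isolated zero, $\det DF(p)=0$. My first choice here is an algebraic argument: pick positive generators $\mu_1,\dots,\mu_r$ of $\Sigma$ and substitute $s_j=T^{\mu_j}$, so that $g_i$ becomes a formal (in applications, convergent) power series $\tilde g_i(x,y,s)$ with Laurent-polynomial coefficients, vanishing at $s=0$; the zero scheme $Z$ of $F+\tilde G$ near $(p,0)$ then defines a family over a neighbourhood of $0\in\mathbb{C}^r$ whose central fibre is $F^{-1}(0)$ near $p$. Since $p$ is isolated in $F^{-1}(0)$, this family is finite flat of some degree $d\ge 1$ near $(p,0)$; base-changing the rank-$d$ free $\mathbb{C}\{s\}$-module $\mathcal{O}_{Z,(p,0)}$ along $\mathbb{C}\{s\}\to\Lambda_0$, $s_j\mapsto T^{\mu_j}$, gives a nonzero finite $\Lambda_0$-algebra $B$. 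Because $\Lambda$ is algebraically closed, $B\otimes_{\Lambda_0}\Lambda$ has a $\Lambda$-point; its coordinate values are integral over $\Lambda_0$, hence lie in $\Lambda_0$ since the valuation ring $\Lambda_0$ is integrally closed; and modulo $\Lambda_+$ the point reduces to a point of $B/\Lambda_+B=\mathcal{O}_{F^{-1}(0),p}$, i.e. to $p$. This produces a solution in $\Lambda_0^2$ near $p$.

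If one prefers to avoid the Archimedean convergence of $\tilde G$, the same conclusion can be reached $T$-adically in the spirit of the proof of Proposition 4.8: after a generic $\mathrm{GL}_2(\mathbb{C})$-change of the target coordinates (applied to $(f_1,f_2)$ and $(g_1,g_2)$ alike, changing neither hypothesis nor conclusion) one Puiseux-parametrises the branches of $V(f_1)$ at $p$, uses that $f_2$ restricted to each branch vanishes to finite order, eliminates $x$, and is left with a single equation $R(y;T)=0$ whose $T=0$ specialisation has an isolated zero of finite order at $y_0$; this is solved for $y\in\Lambda_0$ by the Newton–Puiseux algorithm run over the Novikov filtration, after which $x$ is recovered. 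The step I expect to be the main obstacle is exactly this degenerate case, and within it the bookkeeping that keeps the constructed series inside $\Lambda_0$ rather than merely $\Lambda$: one must check both that the exponents produced are $\ge 0$ (forced by the branch extending continuously to $p$ at $T=0$) and that they form a well-ordered set of type $\omega$ — and it is precisely here that the hypothesis $\Sigma$ finitely generated is used, since the Newton–Puiseux process only ever adjoins $m$-th roots of $T$ for $m\le d$, so all exponents remain in $\tfrac{1}{d!}\Sigma$, a finitely generated sub-semigroup of $\mathbb{R}_{\ge 0}$.
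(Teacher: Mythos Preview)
Your argument is correct. The decisive step it shares with the paper is the substitution $s_j=T^{\mu_j}$ for generators $\mu_j$ of $\Sigma$, turning the problem into one about a family over an $r$-dimensional base with central fibre $F^{-1}(0)$ near $p$. From there the routes diverge. The paper's primary argument follows \cite{FOOO1} and is complex-analytic: one studies the solution variety, projects it to the parameter space (a punctured polydisk replacing the punctured disk of \cite{FOOO1}), and invokes stability of intersection multiplicity under perturbation to see that the nearby fibres are nonempty, then specialises along $s_j\mapsto T^{\mu_j}$. You instead run a commutative-algebra argument: finite flatness of the family over the formal base, base change to $\Lambda_0$, existence of a $\Lambda$-point by algebraic closedness of $\Lambda$, and descent to $\Lambda_0$ via integral closedness of the valuation ring. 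The paper in fact alludes to exactly this alternative (``a more algebraic proof by the theory of complete local rings and the theory of Puiseux and Hahn rings'') without details, so you have written out what the paper only names. Two small points worth tightening: your hypothesis $x_0y_0\neq 0$, needed to expand the Laurent monomials as power series in $\xi,\eta$, is not part of the lemma as stated, though it holds in every application in the paper; and the flatness you invoke deserves one line of justification --- it holds because $(f_1,f_2)$ is a regular sequence in $\mathcal{O}_{\mathbb{C}^2,p}$ (two elements cutting a zero-dimensional germ from a regular surface), hence so is the perturbed pair modulo $(s)$, making the family Cohen--Macaulay and finite over a regular base, whence flat.
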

\begin{proof}
For complex functions the multiplicity of an isolated zero is stable under small perturbations. This lemma is an analogue in the setting of Novikov ring with the non-Archimedean topology. The proof is essentially given in the weakly nondegenerate case of Theorem 10.4 in \cite{FOOO1}, where they assume that $\Sigma$ is generated by a single element such that $\Sigma$ can be rearranged to be $\mathbb{Z}_{\geq 0}$. When $\Sigma$ is finitely generated the proof is similar. We deal with the solution space of (10.8) in \cite{FOOO1} as a polynomial of several variables instead of a polynomial of one variable. And the target of the projection (10.9) in \cite{FOOO1} will be modified to a punctured polydisk. Then other parts of the proof follow similarly. We refer to Lemma 9.18, 10.12, 10.13, 10.14 and 10.15 in \cite{FOOO1} for more details.

There is also a more algebraic proof by the theory of complete local rings and the theory of Puiseux and Hahn rings. We omit it here but thank Jason Starr for helpful explanations.
\end{proof}

\section{Appendix 2: results of other semi-Fano toric surfaces}
We list the potential functions, their critical points and certain bulk deformations to obtain quasi-morphisms in the cases of $X_{3}, \cdots, X_{10}$.

\textbf{The case of $X_{3}$.} First we consider the toric surface $X_{3}$ corresponding to the following moment polytope.

$l_{1}: 0.5-\alpha_{1}+u_{1} \geq 0;$

$l_{2}: u_{2} \geq 0;$

$l_{3}: 7-u_{1}-u_{2} \geq 0;$

$l_{4}: 5-\alpha_{2}-u_{2} \geq 0;$

$l_{5}: 3-\alpha_{3}+u_{1}-u_{2} \geq 0;$

$l_{6}: 1+2u_{1}-u_{2} \geq 0.$

This moment polytope is obtained from the moment polytope of $X_{3}$ in \cite{CL} where we set $t_{1}=t_{2}=t_{3}=t_{4}=1$. When $\alpha_{i}\rightarrow 0$ we get the toric orbifold $X_{3,0}$ with one singularity of $A_{1}$-type and one singularity of $A_{2}$-type. We deform the singularities from toric resolution to the smoothing $\hat{X}_{3}$ such that all the one-point open Gromov-Witten invariants are preserved for toric fibers inside $u_{1}\geq 0, u_{2}\leq 4.8$. Then the potential function of a toric fiber $L(u)$ is
\begin{equation}
\begin{split}
\mathfrak{PO}_{\mathfrak{b}}^{L(u)}(y_{1}, y_{2}) &=(e^{a_{1}}+e^{-a_{1}+a_{2}+a_{6}})y_{1}T^{0.5+u_{1}}+ e^{a_{2}}y_{2}T^{u_{2}}+ e^{a_{3}}y_{1}^{-1}y_{2}^{-1}T^{7-u_{1}-u_{2}}\\
&+ (e^{a_{4}}+e^{a_{3}-a_{4}+a_{5}}+e^{a_{3}-a_{5}+a_{6}})y_{2}^{-1}T^{5-u_{2}}\\
&+ (e^{a_{5}}+e^{a_{4}-a_{5}+a_{6}}+e^{a_{3}-a_{4}+a_{6}})y_{1}y_{2}^{-1}T^{3+u_{1}-u_{2}}\\
&+ e^{a_{6}}y_{1}^{2}y_{2}^{-1}T^{1+2u_{1}-u_{2}}
\end{split}
\end{equation}
where $\mathfrak{b}=\sum_{i=1}^{6} a_{i}PD([D_{i}])$ is a bulk deformation.

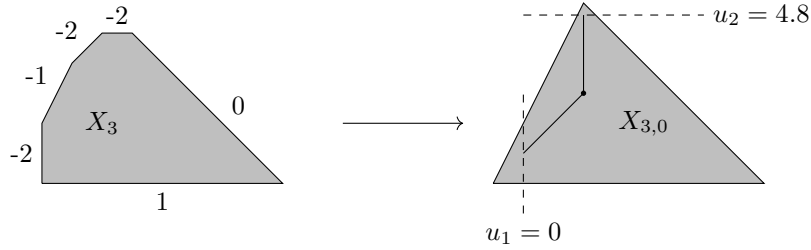
\begin{figure}
  \begin{tikzpicture}[xscale=0.8, yscale=0.8]
  \path [fill=lightgray, lightgray] (-8,0)--(-8,1)--(-7.5,2)--(-7,2.5)--(-6.5,2.5)--(-4,0);
  \draw (-8,0)--(-8,1)--(-7.5,2)--(-7,2.5)--(-6.5,2.5)--(-4,0)--(-8,0);
  \node at (-7,1) {$X_{3}$};
  \node [above] at (-6.75,2.5) {-2};
  \node [above left] at (-7.25,2.25) {-2};
  \node [left] at (-8,0.5) {-2};
  \node [below] at (-6,0) {1};
  \node [above right] at (-5,1) {0};
  \node [above left] at (-7.75, 1.5) {-1};
  \draw [->] (-3,1)--(-1,1);
  \path [fill=lightgray, lightgray] (-0.5,0)--(0,1)--(0.5,2)--(1,3)--(4,0);
  \draw (-0.5,0)--(0,1)--(0.5,2)--(1,3)--(4,0)--(-0.5,0);
  \node at (2,1) {$X_{3,0}$};
  \draw [dashed] (0,-0.5)--(0,1.5);
  \draw [dashed] (0,2.8)--(3,2.8);
  \draw (0,0.5)--(1,1.5)--(1,2.8);
  \node [right] at (3,2.8) {$u_{2}=4.8$};
  \node [below] at (0,-0.5) {$u_{1}=0$};
  \filldraw[black] (1,1.5) circle (1pt);
  \end{tikzpicture}
  \caption{Toric degeneration from $X_{3}$ to $X_{3,0}$.}
  \label{fig: Moment polytope 2}
\end{figure}

On the two segments
$$
-u_{1}+u_{2}=0.5, \quad 2.5\leq u_{2}\leq 4.8
$$
and
$$
u_{1}=2, \quad 0.5\leq u_{2}\leq 2.5
$$
the Lagrangian fiber $L(u)$ has nontrivial deformed Floer cohomology hence it is nondisplaceable.

The fiber $L(u)=L(2, 2.5)$ is a monotone Lagrangian torus with minimal Maslov index 2. It bounds 11 families of holomorphic disks and its potential function modulo energy parameter is
\begin{equation}
\begin{split}
&\mathfrak{PO}_{\mathfrak{b}}^{L(u)}(y_{1},y_{2})= (e^{a_{1}}+e^{-a_{1}+a_{2}+a_{6}})y_{1}+e^{a_{2}}y_{2}+ e^{a_{3}}y_{1}^{-1}\\
&+e^{a_{4}}y_{2}^{-1}+(e^{a_{5}}+e^{a_{4}-a_{5}+a_{6}})y_{1}y_{2}^{-1}+e^{a_{6}}y_{1}^{2}y_{2}^{-1}
\end{split}
\end{equation}
When there is no bulk deformation we can check that there are not enough nonzero critical points. Hence we consider the following bulk deformation
$$
a=(a_{1}, a_{2}, a_{3}, a_{4}, 0, 0); \quad e^{a_{1}}=e^{a_{3}}=2, e^{a_{2}}=e^{a_{4}}=-2
$$
to perturb the potential function. Then the bulk-deformed potential function is
\begin{equation}
\mathfrak{PO}_{\mathfrak{b}}^{L(u)}=y_{1}-2y_{2}+2y_{1}^{-1}y_{2}^{-1}-y_{2}^{-1}-2y_{1}y_{2}^{-1}+y_{1}^{2}y_{2}^{-1}
\end{equation}
which has 6 nondegenerate critical points. Therefore with respect to this bulk deformation the deformed Kodaira-Spencer map is an isomorphism and it gives us a quasi-morphism $\mu_{\mathfrak{b}}$.

\textbf{The case of $X_{4}$.} We consider the toric surface $X_{4}$ corresponding to the following moment polytope.

$l_{1}: 0.5-\alpha_{1}+u_{1} \geq 0;$

$l_{2}: u_{2} \geq 0;$

$l_{3}: 3.5-u_{1} \geq 0;$

$l_{4}: 4-u_{2} \geq 0;$

$l_{5}: 2.5-\alpha_{2}+u_{1}-u_{2} \geq 0;$

$l_{6}: 1+2u_{1}-u_{2} \geq 0.$

This moment polytope is obtained from the moment polytope of $X_{4}$ in \cite{CL} where we set $t_{1}=t_{3}=t_{4}=1, t_{2}=1.5$. When $\alpha_{i}\rightarrow 0$ we get the toric orbifold $X_{4,0}$ with 2 singularities of $A_{1}$-type. We deform the singularities from toric resolution to the smoothing $\hat{X}_{4}$ such that all the one-point open Gromov-Witten invariants are preserved for toric fibers inside $0\leq u_{1}, 0\leq 2+u_{1}-u_{2}$. Then the potential function of a toric fiber $L(u)$ is
\begin{equation}
\begin{split}
&\mathfrak{PO}_{\mathfrak{b}}^{L(u)}(y_{1},y_{2})= (e^{a_{1}}+e^{-a_{1}+a_{2}+a_{6}})y_{1}T^{0.5+u_{1}}+e^{a_{2}}y_{2}T^{u_{2}}+ e^{a_{3}}y_{1}^{-1}T^{3.5-u_{1}}\\
&+e^{a_{4}}y_{2}^{-1}T^{4-u_{2}}+(e^{a_{5}}+e^{a_{4}-a_{5}+a_{6}})y_{1}y_{2}^{-1}T^{2.5+u_{1}-u_{2}}+e^{a_{6}}y_{1}^{2}y_{2}^{-1}T^{1+2u_{1}-u_{2}}
\end{split}
\end{equation}
where $\mathfrak{b}=\sum_{i=1}^{6} a_{i}PD([D_{i}])$ is a bulk deformation.

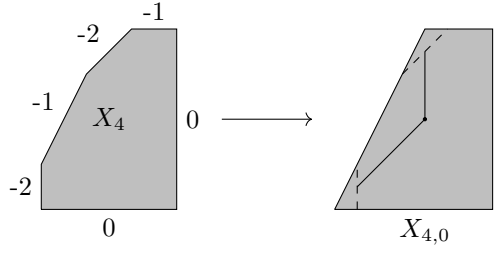
\begin{figure}
  \begin{tikzpicture}[xscale=0.6, yscale=0.6]
  \path [fill=lightgray, lightgray] (0,0)--(3,0)--(3,4)--(2,4)--(1,3)--(0,1);
  \draw (0,0)--(3,0)--(3,4)--(2,4)--(1,3)--(0,1)--(0,0);
  \node at (1.5,2) {$X_{4}$};
  \node [left] at (0,0.5) {-2};
  \node [below] at (1.5,0) {0};
  \node [right] at (3,2) {0};
  \node [above] at (2.5,4) {-1};
  \node [above left] at (1.5,3.5) {-2};
  \node [above left] at (0.5,2) {-1};
  \draw [->] (4,2)--(6,2);
  \path [fill=lightgray, lightgray] (6.5,0)--(10,0)--(10,4)--(8.5,4);
  \draw (6.5,0)--(10,0)--(10,4)--(8.5,4)--(6.5,0);
  \node [below] at (8.5,0) {$X_{4,0}$};
  \draw [dashed] (7,0)--(7,1);
  \draw [dashed] (8,3)--(9,4);
  \draw (8.5,2)--(8.5,3.5);
  \draw (7,0.5)--(8.5,2);
  \filldraw[black] (8.5,2) circle (1pt);
  \end{tikzpicture}
  \caption{Toric degeneration from $X_{4}$ to $X_{4,0}$.}
  \label{fig: Moment polytope 3}
\end{figure}

On the two segments
$$
-u_{1}+u_{2}=0.5, \quad 0\leq u_{1}\leq 1.5
$$
and
$$
u_{1}=1.5, \quad 2\leq u_{2}\leq 3.5
$$
the Lagrangian fiber $L(u)$ has nontrivial deformed Floer cohomology hence it is nondisplaceable.

The fiber $L(u)=L(1.5, 2)$ is a monotone Lagrangian torus with minimal Maslov index 2. It bounds 8 families of holomorphic disks and its potential function modulo energy parameter is
\begin{equation}
\begin{split}
&\mathfrak{PO}_{\mathfrak{b}}^{L(u)}(y_{1},y_{2})= (e^{a_{1}}+e^{-a_{1}+a_{2}+a_{6}})y_{1}+e^{a_{2}}y_{2}+ e^{a_{3}}y_{1}^{-1}\\
&+e^{a_{4}}y_{2}^{-1}+(e^{a_{5}}+e^{a_{4}-a_{5}+a_{6}})y_{1}y_{2}^{-1}+e^{a_{6}}y_{1}^{2}y_{2}^{-1}.
\end{split}
\end{equation}
When there is no bulk deformation we can check that there are not enough nonzero critical points. Hence we consider the following bulk deformation
$$
a=(a_{1}, 0, 0, 0, a_{5}, 0); \quad e^{a_{1}}+e^{-a_{1}}=3, e^{a_{5}}+e^{-a_{5}}=5
$$
to perturb the potential function. Then the bulk-deformed potential function is
\begin{equation}
\mathfrak{PO}_{\mathfrak{b}}^{L(u)}=3y_{1}+y_{2}+y_{1}^{-1}+y_{2}^{-1}+5y_{1}y_{2}^{-1}+y_{2}^{-1}+y_{1}^{2}y_{2}^{-1}
\end{equation}
which has 6 nondegenerate critical points. Therefore with respect to this bulk deformation the deformed Kodaira-Spencer map is an isomorphism and it gives us a quasi-morphism $\mu_{\mathfrak{b}}$.

\textbf{The case of $X_{5}$.} We consider the toric surface $X_{5}$ corresponding to the following moment polytope.

$l_{1}: u_{1} \geq 0;$

$l_{2}: u_{2} \geq 0;$

$l_{3}: 2-u_{1} \geq 0;$

$l_{4}: 3-u_{1}-u_{2} \geq 0;$

$l_{5}: 2-\alpha-u_{2} \geq 0;$

$l_{6}: 1+u_{1}-u_{2} \geq 0.$

This moment polytope is obtained from the moment polytope of $X_{5}$ in \cite{CL} where we set $t_{1}=t_{3}=1, t_{2}=t_{4}=0.5$. When $\alpha\rightarrow 0$ we get the toric orbifold $X_{5,0}$ with one singularity of $A_{1}$-type. We deform this singularity from toric resolution to the smoothing $\hat{X}_{5}$ such that all the one-point open Gromov-Witten invariants are preserved for toric fibers inside $u_{2}\leq 1.5$. Then the potential function of a toric fiber $L(u)$ is
\begin{equation}
\begin{split}
&\mathfrak{PO}_{\mathfrak{b}}^{L(u)}(y_{1},y_{2})= e^{a_{1}}y_{1}T^{u_{1}}+e^{a_{2}}y_{2}T^{u_{2}}+ e^{a_{3}}y_{1}^{-1}T^{2-u_{1}}\\
&+e^{a_{4}}y_{1}^{-1}y_{2}^{-1}T^{3-u_{1}-u_{2}}+(e^{a_{5}}+e^{a_{4}-a_{5}+a_{6}})y_{2}^{-1}T^{2-u_{2}}+e^{a_{6}}y_{1}y_{2}^{-1}T^{1+u_{1}-u_{2}}
\end{split}
\end{equation}
where $\mathfrak{b}=\sum_{i=1}^{6} a_{i}PD([D_{i}])$ is a bulk deformation.

\begin{figure}
  \begin{tikzpicture}
  \path [fill=lightgray, lightgray] (0,0)--(2,0)--(2,1)--(1.5,1.5)--(0.5,1.5)--(0,1);
  \draw (0,0)--(2,0)--(2,1)--(1.5,1.5)--(0.5,1.5)--(0,1)--(0,0);
  \node at (1,0.75) {$X_{5}$};
  \node [left] at (0,0.5) {-1};
  \node [below] at (1,0) {0};
  \node [right] at (2,0.5) {-1};
  \node [above] at (1,1.5) {-2};
  \node [above right] at (1.75,1.25) {-1};
  \node [above left] at (0.25,1.25) {-1};
  \draw [->] (3,1)--(5,1);
  \path [fill=lightgray, lightgray] (6,0)--(8,0)--(8,1)--(7,2)--(6,1);
  \draw (6,0)--(8,0)--(8,1)--(7,2)--(6,1)--(6,0);
  \node [below] at (7,0) {$X_{5,0}$};
  \draw [dashed] (6.5,1.5)--(7.5,1.5);
  \draw (7,1.5)--(7,1);
  \filldraw[black] (7,1) circle (1pt);
  \end{tikzpicture}
  \caption{Toric degeneration from $X_{5}$ to $X_{5,0}$.}
  \label{fig: Moment polytope 4}
\end{figure}
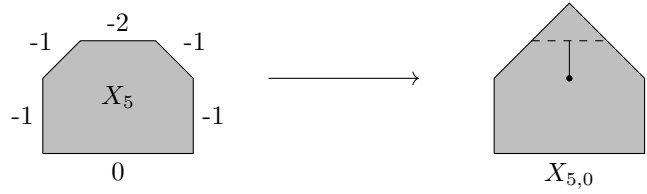

On the segment
$$
u_{1}=1, \quad 1\leq u_{2}\leq 1.5
$$
the Lagrangian fiber $L(u)$ has nontrivial deformed Floer cohomology hence it is nondisplaceable.

The fiber $L(u)=L(1, 1)$ is a monotone Lagrangian torus with minimal Maslov index 2. It bounds 7 families of holomorphic disks and its potential function modulo energy parameter is
\begin{equation}
\begin{split}
&\mathfrak{PO}_{\mathfrak{b}}^{L(u)}(y_{1},y_{2})= e^{a_{1}}y_{1}+e^{a_{2}}y_{2}+ e^{a_{3}}y_{1}^{-1}\\
&+e^{a_{4}}y_{1}^{-1}y_{2}^{-1}+(e^{a_{5}}+e^{a_{4}-a_{5}+a_{6}})y_{2}^{-1}+e^{a_{6}}y_{1}y_{2}^{-1}
\end{split}
\end{equation}
When there is no bulk deformation we can check that there exist critical points but not of maximal number. Hence we consider the following bulk deformation
$$
a=(a_{1}, 0, 0, 0, a_{5}, 0); \quad e^{a_{1}}=2, e^{a_{5}}+e^{-a_{5}}=3
$$
to perturb the potential function. Then the bulk-deformed potential function is
\begin{equation}
\mathfrak{PO}_{\mathfrak{b}}^{L(u)}=2y_{1}+y_{2}+y_{1}^{-1}+y_{1}^{-1}y_{2}^{-1}+3y_{2}^{-1}+y_{1}y_{2}^{-1}
\end{equation}
which has 6 nondegenerate critical points. Therefore with respect to this bulk deformation the deformed Kodaira-Spencer map is an isomorphism and it gives us a quasi-morphism $\mu_{\mathfrak{b}}$.

\textbf{The case of $X_{6}$.} We consider the toric surface $X_{6}$ corresponding to the following moment polytope.

$l_{1}: 0.5-\alpha_{1}+u_{1} \geq 0;$

$l_{2}: u_{2} \geq 0;$

$l_{3}: 8.5-u_{1} \geq 0;$

$l_{4}: 13-u_{1}-u_{2} \geq 0;$

$l_{5}: 9-\alpha_{2}-u_{2} \geq 0;$

$l_{6}: 5-\alpha_{3}+u_{1}-u_{2} \geq 0;$

$l_{7}: 1+2u_{1}-u_{2} \geq 0.$

This moment polytope is obtained from the moment polytope of $X_{6}$ in \cite{CL} where we set $t_{1}=t_{5}=1, t_{2}=1.5, t_{3}=t_{4}=3$. When $\alpha\rightarrow 0$ we get the toric orbifold $X_{6,0}$ with one singularity of $A_{2}$-type at the top and one singularity of $A_{1}$-type. We deform these singularities from toric resolution to the smoothing $\hat{X}_{6}$ such that all the one-point open Gromov-Witten invariants are preserved for toric fibers inside $0\leq u_{1}, u_{2}\leq 8$. Then the potential function of a toric fiber $L(u)$ is
\begin{equation}
\begin{split}
&\mathfrak{PO}_{\mathfrak{b}}^{L(u)}(y_{1},y_{2})= (e^{a_{1}}+e^{-a_{1}+a_{2}+a_{7}})y_{1}T^{0.5+u_{1}}+e^{a_{2}}y_{2}T^{u_{2}}+ e^{a_{3}}y_{1}^{-1}T^{8.5-u_{1}}\\
&+e^{a_{4}}y_{1}^{-1}y_{2}^{-1}T^{13-u_{1}-u_{2}}+(e^{a_{5}}+e^{a_{4}-a_{5}+a_{6}}+e^{a_{4}-a_{6}+a_{7}})y_{2}^{-1}T^{9-u_{2}}\\
&+(e^{a_{6}}+e^{a_{5}-a_{6}+a_{7}}+e^{a_{4}-a_{5}+a_{7}})y_{1}y_{2}^{-1}T^{5+u_{1}-u_{2}}+e^{a_{7}}y_{1}^{2}y_{2}^{-1}T^{1+2u_{1}-u_{2}}
\end{split}
\end{equation}
where $\mathfrak{b}=\sum_{i=1}^{7} a_{i}PD([D_{i}])$ is a bulk deformation.

\begin{figure}
  \begin{tikzpicture}[xscale=0.5, yscale=0.4]
  \path [fill=lightgray, lightgray] (0,0)--(7.5,0)--(7.5,5.5)--(7,6)--(4,6)--(1,3)--(0,1);
  \draw (0,0)--(7.5,0)--(7.5,5.5)--(7,6)--(4,6)--(1,3)--(0,1)--(0,0);
  \node at (4,2) {$X_{6}$};
  \node [left] at (0,0.5) {-2};
  \node [below] at (4,0) {0};
  \node [right] at (7.5,2.5) {-1};
  \node [above] at (5.5,6) {-2};
  \node [above right] at (7.25,5.75) {-1};
  \node [above left] at (2.5,4.5) {-2};
  \node [above left] at (0.5,2) {-1};
  \draw [->] (9,2)--(11,2);
  \path [fill=lightgray, lightgray] (11.5,0)--(19.5,0)--(19.5,5.5)--(16,9);
  \draw (11.5,0)--(19.5,0)--(19.5,5.5)--(16,9)--(11.5,0);
  \node [below] at (16,0) {$X_{6,0}$};
  \draw [dashed] (12,0)--(12,1);
  \draw [dashed] (15.5,8)--(17,8);
  \draw (16,8)--(16,4.5)--(12,0.5);
  \filldraw[black] (16,4.5) circle (1pt);
  \end{tikzpicture}
  \caption{Toric degeneration from $X_{6}$ to $X_{6,0}$.}
  \label{fig: Moment polytope 5}
\end{figure}
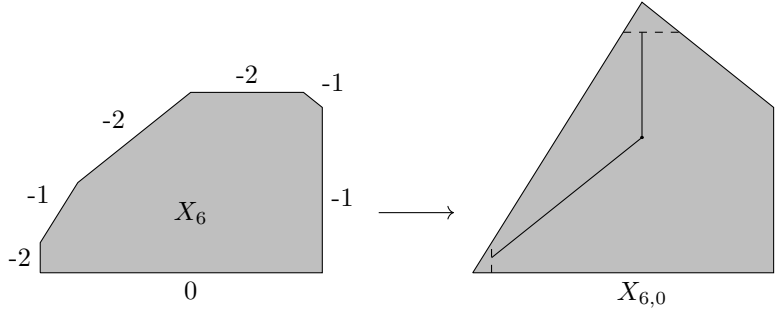

On the two segments
$$
u_{1}=4, \quad 4.5\leq u_{2}\leq 8
$$
and
$$
-u_{1}+u_{2}=0.5, \quad 0\leq u_{1}\leq 4
$$
the Lagrangian fiber $L(u)$ has nontrivial deformed Floer cohomology hence it is nondisplaceable.

The fiber $L(u)=L(4, 4.5)$ at the barycenter is a monotone Lagrangian torus with minimal Maslov index 2. It bounds 12 families of holomorphic disks and its potential function modulo energy parameter is \begin{equation}
\begin{split}
&\mathfrak{PO}_{\mathfrak{b}}^{L(u)}(y_{1},y_{2})= (e^{a_{1}}+e^{-a_{1}+a_{2}+a_{7}})y_{1}+e^{a_{2}}y_{2}+ e^{a_{3}}y_{1}^{-1}\\
&+e^{a_{4}}y_{1}^{-1}y_{2}^{-1}+(e^{a_{5}}+e^{a_{4}-a_{5}+a_{6}}+e^{a_{4}-a_{6}+a_{7}})y_{2}^{-1}\\
&+(e^{a_{6}}+e^{a_{5}-a_{6}+a_{7}}+e^{a_{4}-a_{5}+a_{7}})y_{1}y_{2}^{-1}+e^{a_{7}}y_{1}^{2}y_{2}^{-1}
\end{split}
\end{equation}
When there is no bulk deformation we can check that there exist critical points but not of maximal number. Hence we consider the following bulk deformation
$$
a=(a_{1}, 0, a_{3}, 0, a_{5}, a_{6}, 0)
$$
such that
$$
e^{a_{1}}+e^{-a_{1}}=3, \quad e^{a_{3}}=3, \quad  e^{a_{5}}=2, \quad e^{a_{6}}=-2
$$
to perturb the potential function. Then the bulk-deformed potential function is
\begin{equation}
\mathfrak{PO}_{\mathfrak{b}}^{L(u)}=3y_{1}+y_{2}+3y_{1}^{-1}+y_{1}^{-1}y_{2}^{-1}+0.5y_{2}^{-1}-2.5y_{1}y_{2}^{-1}+y_{1}^{2}y_{2}^{-1}
\end{equation}
which has 7 nondegenerate critical points. Therefore with respect to this bulk deformation the deformed Kodaira-Spencer map is an isomorphism and it gives us a quasi-morphism $\mu_{\mathfrak{b}}$.

\textbf{The case of $X_{7}$.} We consider the toric surface $X_{7}$ corresponding to the following moment polytope.

$l_{1}: 0.5-\alpha_{1}+u_{1} \geq 0;$

$l_{2}: u_{2} \geq 0;$

$l_{3}: 1.5-u_{1}+u_{2} \geq 0;$

$l_{4}: 3.5-u_{1} \geq 0;$

$l_{5}: 4-u_{2} \geq 0;$

$l_{6}: 2.5-\alpha_{2}+u_{1}-u_{2} \geq 0;$

$l_{7}: 1+2u_{1}-u_{2} \geq 0.$

This moment polytope is obtained from the moment polytope of $X_{7}$ in \cite{CL} where we set $t_{1}=t_{4}=t_{5}=1, t_{2}=2, t_{3}=1.5$. When $\alpha\rightarrow 0$ we get the toric orbifold $X_{7,0}$ with 2 singularities of $A_{2}$-type. We deform these singularities from toric resolution to the smoothing $\hat{X}_{7}$ such that all the one-point open Gromov-Witten invariants are preserved for toric fibers inside $0\leq u_{1}, 0\leq 2+u_{1}-u_{2}$. Then the potential function of a toric fiber $L(u)$ is
\begin{equation}
\begin{split}
&\mathfrak{PO}_{\mathfrak{b}}^{L(u)}(y_{1},y_{2})= (e^{a_{1}}+e^{-a_{1}+a_{2}+a_{7}})y_{1}T^{0.5+u_{1}}+e^{a_{2}}y_{2}T^{u_{2}}+ e^{a_{3}}y_{1}^{-1}y_{2}T^{1.5-u_{1}+u_{2}}\\
&+e^{a_{4}}y_{1}^{-1}T^{3.5-u_{1}}+e^{a_{5}}y_{2}^{-1}T^{4-u_{2}}\\
&+(e^{a_{6}}+e^{a_{5}-a_{6}+a_{7}})y_{1}y_{2}^{-1}T^{2.5+u_{1}-u_{2}}+e^{a_{7}}y_{1}^{2}y_{2}^{-1}T^{1+2u_{1}-u_{2}}
\end{split}
\end{equation}
where $\mathfrak{b}=\sum_{i=1}^{7} a_{i}PD([D_{i}])$ is a bulk deformation.

\begin{figure}
  \begin{tikzpicture}[xscale=0.8, yscale=0.8]
  \path [fill=lightgray, lightgray] (0,0)--(1.5,0)--(3.5,2)--(3.5,4)--(2,4)--(1,3)--(0,1);
  \draw (0,0)--(1.5,0)--(3.5,2)--(3.5,4)--(2,4)--(1,3)--(0,1)--(0,0);
  \node at (1.5,2) {$X_{7}$};
  \node [left] at (0,0.5) {-2};
  \node [below] at (0.75,0) {-1};
  \node [below right] at (2.5,1) {-1};
  \node [right] at (3.5,3) {-1};
  \node [above] at (2.75,4) {-1};
  \node [above right] at (1,3.5) {-2};
  \node [above left] at (0.5,2) {-1};
  \draw [->] (4,2)--(6,2);
  \path [fill=lightgray, lightgray] (6.5,0)--(8.5,0)--(10.5,2)--(10.5,4)--(8.5,4);
  \draw (6.5,0)--(8.5,0)--(10.5,2)--(10.5,4)--(8.5,4)--(6.5,0);
  \node [below] at (7.5,0) {$X_{7,0}$};
  \draw [dashed] (7,0)--(7,1);
  \draw [dashed] (8,3)--(9,4);
  \draw (8.5,3.5)--(8.5,2)--(7,0.5);
  \filldraw[black] (8.5,2) circle (1pt);
  \end{tikzpicture}
  \caption{Toric degeneration from $X_{7}$ to $X_{7,0}$.}
  \label{fig: Moment polytope 6}
\end{figure}
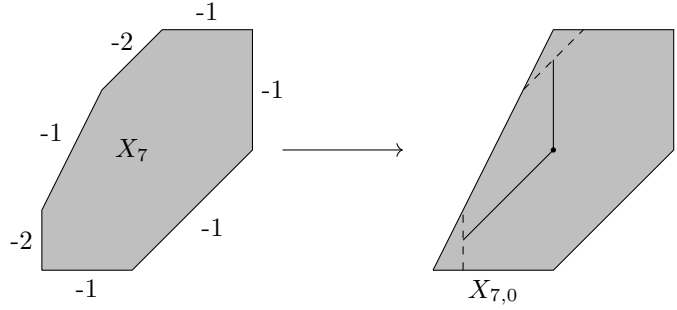

On the two segments
$$
u_{1}=1.5, \quad 2\leq u_{2}\leq 3.5
$$
and
$$
-u_{1}+u_{2}=0.5, \quad 0\leq u_{1}\leq 1.5
$$
the Lagrangian fiber $L(u)$ has nontrivial deformed Floer cohomology hence it is nondisplaceable.

The fiber $L(u)=L(1.5, 2)$ at the barycenter is a monotone Lagrangian torus with minimal Maslov index 2. It bounds 9 families of holomorphic disks and its potential function modulo energy parameter is \begin{equation}
\begin{split}
&\mathfrak{PO}_{\mathfrak{b}}^{L(u)}(y_{1},y_{2})= (e^{a_{1}}+e^{-a_{1}+a_{2}+a_{7}})y_{1}+e^{a_{2}}y_{2}+ e^{a_{3}}y_{1}^{-1}y_{2}\\
&+e^{a_{4}}y_{1}^{-1}+e^{a_{5}}y_{2}^{-1}\\
&+(e^{a_{6}}+e^{a_{5}-a_{6}+a_{7}})y_{1}y_{2}^{-1}+e^{a_{7}}y_{1}^{2}y_{2}^{-1}
\end{split}
\end{equation}
When there is no bulk deformation we can check that there exist critical points but not of maximal number. Hence we consider the following bulk deformation
$$
a=(a_{1}, 0, a_{3}, 0, 0, a_{6}, 0)
$$
such that
$$
e^{a_{1}}+e^{-a_{1}}=3, \quad e^{a_{3}}=2, \quad e^{a_{6}}+e^{-a_{6}}=4
$$
to perturb the potential function. Then the bulk-deformed potential function is
\begin{equation}
\mathfrak{PO}_{\mathfrak{b}}^{L(u)}=3y_{1}+y_{2}+2y_{1}^{-1}y_{2}+y_{1}^{-1}+y_{2}^{-1}+4y_{1}y_{2}^{-1}+y_{1}^{2}y_{2}^{-1}
\end{equation}
which has 7 nondegenerate critical points. Therefore with respect to this bulk deformation the deformed Kodaira-Spencer map is an isomorphism and it gives us a quasi-morphism $\mu_{\mathfrak{b}}$.

\textbf{The case of $X_{8}$.} We consider the toric surface $X_{8}$ corresponding to the following moment polytope.

$l_{1}: 0.5-\alpha_{1}+u_{1} \geq 0;$

$l_{2}: u_{2} \geq 0;$

$l_{3}: 5.5-\alpha_{2}-u_{1} \geq 0;$

$l_{4}: 11-2u_{1}-u_{2} \geq 0;$

$l_{5}: 8.5-\alpha_{3}-u_{1}-u_{2} \geq 0;$

$l_{6}: 6-\alpha_{4}-u_{2} \geq 0;$

$l_{7}: 3.5-\alpha_{5}+u_{1}-u_{2} \geq 0;$

$l_{8}: 1+2u_{1}-u_{2} \geq 0.$

This moment polytope is obtained from the moment polytope of $X_{8}$ in \cite{CL} where we set $t_{1}=\cdots=t_{6}=1$. When $\alpha\rightarrow 0$ we get the toric orbifold $X_{9,0}$ with one singularity of $A_{3}$-type and two singularities of $A_{1}$-type. We deform these singularities from toric resolution to the smoothing $\hat{X}_{8}$ such that all the one-point open Gromov-Witten invariants are preserved for toric fibers inside $0\leq u_{1}\leq 5, u_{2}\leq 5.8$. Then the potential function of a toric fiber $L(u)$ is
\begin{equation}
\begin{split}
& \mathfrak{PO}_{\mathfrak{b}}^{L(u)}(y_{1}, y_{2}) =(e^{a_{1}}+e^{-a_{1}+a_{2}+a_{8}})y_{1}T^{0.5+u_{1}}+ e^{a_{2}}y_{2}T^{u_{2}}\\
&+ (e^{a_{3}}+e^{a_{2}-a_{3}+a_{4}})y_{1}^{-1}T^{5.5-u_{1}}+e^{a_{4}}y_{1}^{-2}y_{2}^{-1}T^{11-2u_{1}-u_{2}}\\
&+ (e^{a_{5}}+e^{a_{4}-a_{5}+a_{6}}+e^{a_{4}-a_{6}+a_{7}}+e^{a_{4}-a_{7}+a_{8}})y_{1}^{-1}y_{2}^{-1}T^{8.5-u_{1}-u_{2}}\\
&+ (e^{a_{6}}+e^{a_{5}-a_{6}+a_{7}}+e^{a_{5}-a_{7}+a_{8}}+e^{a_{4}-a_{5}+a_{7}}\\
&+ e^{a_{4}-a_{5}+a_{6}-a_{7}+a_{8}}+e^{a_{4}-a_{6}+a_{8}})y_{2}^{-1}T^{6-u_{2}}\\
&+ (e^{a_{7}}+e^{a_{6}-a_{7}+a_{8}}+e^{a_{5}-a_{6}+a_{8}}+e^{a_{4}-a_{5}+a_{8}})y_{1}y_{2}^{-1}T^{3.5+u_{1}-u_{2}}\\
&+ e^{a_{8}}y_{1}^{2}y_{2}^{-1}T^{1+2u_{1}-u_{2}}.
\end{split}
\end{equation}
where $\mathfrak{b}=\sum_{i=1}^{8} a_{i}PD([D_{i}])$ is a bulk deformation.

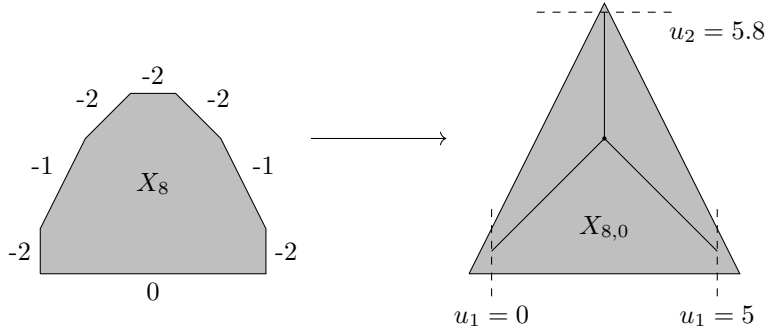
\begin{figure}
  \begin{tikzpicture}[xscale=0.6, yscale=0.6]
  \path [fill=lightgray, lightgray] (-8,0)--(-8,1)--(-7,3)--(-6,4)--(-5,4)--(-4,3)--(-3,1)--(-3,0);
  \draw (-8,0)--(-8,1)--(-7,3)--(-6,4)--(-5,4)--(-4,3)--(-3,1)--(-3,0)--(-8,0);
  \node at (-5.5,2) {$X_{8}$};
  \node [left] at (-8,0.5) {-2};
  \node [below] at (-5.5,0) {0};
  \node [right] at (-3,0.5) {-2};
  \node [above right] at (-3.5,2) {-1};
  \node [above right] at (-4.5,3.5) {-2};
  \node [above] at (-5.5,4) {-2};
  \node [above left] at (-6.5,3.5) {-2};
  \node [above left] at (-7.5,2) {-1};
  \draw [->] (-2,3)--(1,3);
  \path [fill=lightgray, lightgray] (1.5,0)--(4.5,6)--(7.5,0);
  \draw (1.5,0)--(4.5,6)--(7.5,0)--(1.5,0);
  \node [below] at (4.5,1.5) {$X_{8,0}$};
  \draw [dashed] (2,-0.5)--(2,1.5);
  \draw [dashed] (3,5.8)--(6,5.8);
  \draw [dashed] (7,-0.5)--(7,1.5);
  \draw (2,0.5)--(4.5,3)--(4.5,5.8);
  \draw (4.5,3)--(7,0.5);
  \node [below] at (2,-0.5) {$u_{1}=0$};
  \node [below] at (7,-0.5) {$u_{1}=5$};
  \node [below] at (7,5.8) {$u_{2}=5.8$};
  \filldraw[black] (4.5,3) circle (1pt);
  \end{tikzpicture}
  \caption{Toric degeneration from $X_{8}$ to $X_{8,0}$.}
  \label{fig: Moment polytope 7}
\end{figure}

On the three segments
$$
u_{1}=2.5, \quad 3\leq u_{2}\leq 5.8
$$
and
$$
-u_{1}+u_{2}=0.5, \quad 0\leq u_{1}\leq 2.5
$$
and
$$
u_{1}+u_{2}=5.5, \quad 2.5\leq u_{1}\leq 5
$$
the Lagrangian fiber $L(u)$ has nontrivial deformed Floer cohomology hence it is nondisplaceable.

The fiber $L(u)=L(2.5, 3)$ at the barycenter is a monotone Lagrangian torus with minimal Maslov index 2. It bounds 21 families of holomorphic disks and its potential function modulo energy parameter is \begin{equation}
\begin{split}
& \mathfrak{PO}_{\mathfrak{b}}^{L(u)}(y_{1}, y_{2}) =(e^{a_{1}}+e^{-a_{1}+a_{2}+a_{8}})y_{1}+ e^{a_{2}}y_{2}\\
&+ (e^{a_{3}}+e^{a_{2}-a_{3}+a_{4}})y_{1}^{-1}+e^{a_{4}}y_{1}^{-2}y_{2}^{-1}\\
&+ (e^{a_{5}}+e^{a_{4}-a_{5}+a_{6}}+e^{a_{4}-a_{6}+a_{7}}+e^{a_{4}-a_{7}+a_{8}})y_{1}^{-1}y_{2}^{-1}\\
&+ (e^{a_{6}}+e^{a_{5}-a_{6}+a_{7}}+e^{a_{5}-a_{7}+a_{8}}+e^{a_{4}-a_{5}+a_{7}}\\
&+ e^{a_{4}-a_{5}+a_{6}-a_{7}+a_{8}}+e^{a_{4}-a_{6}+a_{8}})y_{2}^{-1}\\
&+ (e^{a_{7}}+e^{a_{6}-a_{7}+a_{8}}+e^{a_{5}-a_{6}+a_{8}}+e^{a_{4}-a_{5}+a_{8}})y_{1}y_{2}^{-1}\\
&+ e^{a_{8}}y_{1}^{2}y_{2}^{-1}.
\end{split}
\end{equation}
When there is no bulk deformation we can check that there exist critical points but not of maximal number. Hence we consider the following bulk deformation
$$
a=(a_{1}, 0, a_{3}, a_{4}, a_{5}, a_{6}, 0, 0)
$$
such that
$$
e^{a_{1}}=e^{a_{6}}=2, \quad e^{a_{3}}=e^{a_{5}}=-2, \quad e^{a_{4}}=3
$$
to perturb the potential function. Then the bulk-deformed potential function is
\begin{equation}
\mathfrak{PO}_{\mathfrak{b}}^{L(u)}=2.5y_{1}+ y_{2}-3.5y_{1}^{-1}+3y_{1}^{-2}y_{2}^{-1}-0.5y_{1}^{-1}y_{2}^{-1}-4y_{2}^{-1}+0.5y_{1}y_{2}^{-1}+ y_{1}^{2}y_{2}^{-1}
\end{equation}
which has 8 nondegenerate critical points. Therefore with respect to this bulk deformation the deformed Kodaira-Spencer map is an isomorphism and it gives us a quasi-morphism $\mu_{\mathfrak{b}}$.

\textbf{The case of $X_{9}$.} We consider the toric surface $X_{9}$ corresponding to the following moment polytope.

$l_{1}: 0.5-\alpha_{1}+u_{1} \geq 0;$

$l_{2}: u_{2} \geq 0;$

$l_{3}: 4-u_{1}+u_{2} \geq 0;$

$l_{4}: 8.5-\alpha_{2}-u_{1} \geq 0;$

$l_{5}: 13-u_{1}-u_{2} \geq 0;$

$l_{6}: 9-\alpha_{3}-u_{2} \geq 0;$

$l_{7}: 5-\alpha_{4}+u_{1}-u_{2} \geq 0;$

$l_{8}: 1+2u_{1}-u_{2} \geq 0.$

This moment polytope is obtained from the moment polytope of $X_{9}$ in \cite{CL} where we set $t_{1}=t_{2}=t_{3}=t_{6}=1, t_{4}=t_{5}=3$. When $\alpha\rightarrow 0$ we get the toric orbifold $X_{9,0}$ with 2 singularities of $A_{2}$-type. We deform these singularities from toric resolution to the smoothing $\hat{X}_{9}$ such that all the one-point open Gromov-Witten invariants are preserved for toric fibers inside $0\leq u_{1}\leq 8, u_{2}\leq 8$. Then the potential function of a toric fiber $L(u)$ is
\begin{equation}
\begin{split}
&\mathfrak{PO}_{\mathfrak{b}}^{L(u)}(y_{1},y_{2})= (e^{a_{1}}+e^{-a_{1}+a_{2}+a_{8}})y_{1}T^{0.5+u_{1}}+e^{a_{2}}y_{2}T^{u_{2}}+ e^{a_{3}}y_{1}^{-1}y_{2}T^{4-u_{1}+u_{2}}\\
&+(e^{a_{4}}+e^{a_{3}-a_{4}+a_{5}})y_{1}^{-1}T^{8.5-u_{1}}+e^{a_{5}}y_{1}^{-1}y_{2}^{-1}T^{13-u_{1}-u_{2}}\\
&+(e^{a_{6}}+e^{a_{5}-a_{6}+a_{7}}+e^{a_{5}-a_{7}+a_{8}})y_{2}^{-1}T^{9-u_{2}}\\
&+(e^{a_{7}}+e^{a_{6}-a_{7}+a_{8}}+e^{a_{5}-a_{6}+a_{8}})y_{1}y_{2}^{-1}T^{5+u_{1}-u_{2}}+e^{a_{8}}y_{1}^{2}y_{2}^{-1}T^{1+2u_{1}-u_{2}}
\end{split}
\end{equation}
where $\mathfrak{b}=\sum_{i=1}^{8} a_{i}PD([D_{i}])$ is a bulk deformation.

\begin{figure}
  \begin{tikzpicture}[xscale=0.4, yscale=0.4]
  \path [fill=lightgray, lightgray] (0,0)--(4,0)--(8,4)--(8,5)--(7,6)--(4,6)--(1,3)--(0,1);
  \draw (0,0)--(4,0)--(8,4)--(8,5)--(7,6)--(4,6)--(1,3)--(0,1)--(0,0);
  \node at (4,2.5) {$X_{9}$};
  \node [left] at (0,0.5) {-2};
  \node [below] at (2,0) {-1};
  \node [below right] at (6,2) {-1};
  \node [right] at (8,4.5) {-2};
  \node [above right] at (7.5,5.5) {-1};
  \node [above] at (5.5,6) {-2};
  \node [above left] at (2.5,4.5) {-2};
  \node [above left] at (0.5,2) {-1};
  \draw [->] (9,4)--(11,4);
  \path [fill=lightgray, lightgray] (11.5,0)--(16,0)--(20.5,4.5)--(16,9);
  \draw (11.5,0)--(16,0)--(20.5,4.5)--(16,9)--(11.5,0);
  \node [below] at (14,0) {$X_{9,0}$};
  \draw [dashed] (12,0)--(12,1);
  \draw [dashed] (15.5,8)--(17,8);
  \draw [dashed] (20,4)--(20,5);
  \draw (16,8)--(16,4.5)--(12,0.5);
  \draw (16,4.5)--(20,4.5);
  \filldraw[black] (16,4.5) circle (1pt);
  \end{tikzpicture}
  \caption{Toric degeneration from $X_{9}$ to $X_{9,0}$.}
  \label{fig: Moment polytope 8}
\end{figure}
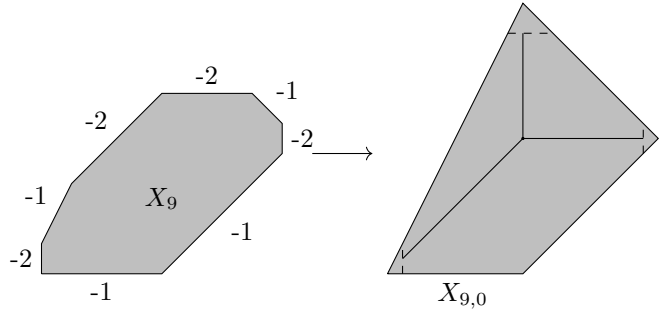

On the three segments
$$
u_{1}=4, \quad 4.5\leq u_{2}\leq 8
$$
and
$$
-u_{1}+u_{2}=0.5, \quad 0\leq u_{1}\leq 4
$$
and
$$
u_{2}=4.5, \quad 4\leq u_{1}\leq 8
$$
the Lagrangian fiber $L(u)$ has nontrivial deformed Floer cohomology hence it is nondisplaceable.

The fiber $L(u)=L(4, 4.5)$ at the barycenter is a monotone Lagrangian torus with minimal Maslov index 2. It bounds 14 families of holomorphic disks and its potential function modulo energy parameter is \begin{equation}
\begin{split}
&\mathfrak{PO}_{\mathfrak{b}}^{L(u)}(y_{1},y_{2})= (e^{a_{1}}+e^{-a_{1}+a_{2}+a_{8}})y_{1}+e^{a_{2}}y_{2}+ e^{a_{3}}y_{1}^{-1}y_{2}\\
&+(e^{a_{4}}+e^{a_{3}-a_{4}+a_{5}})y_{1}^{-1}+e^{a_{5}}y_{1}^{-1}y_{2}^{-1}\\
&+(e^{a_{6}}+e^{a_{5}-a_{6}+a_{7}}+e^{a_{5}-a_{7}+a_{8}})y_{2}^{-1}\\
&+(e^{a_{7}}+e^{a_{6}-a_{7}+a_{8}}+e^{a_{5}-a_{6}+a_{8}})y_{1}y_{2}^{-1}+e^{a_{8}}y_{1}^{2}y_{2}^{-1}
\end{split}
\end{equation}
When there is no bulk deformation we can check that there exist critical points but not of maximal number. Hence we consider the following bulk deformation
$$
a=(a_{1}, a_{2}, 0, a_{4}, 0, a_{6}, a_{7}, 0)
$$
such that
$$
e^{a_{1}}+e^{-a_{1}+a_{2}}=3, \quad e^{a_{2}}=4, \quad e^{a_{4}}+e^{-a_{4}}=4, \quad e^{a_{6}}=2, \quad e^{a_{7}}=-2
$$
to perturb the potential function. Then the bulk-deformed potential function is
\begin{equation}
\mathfrak{PO}_{\mathfrak{b}}^{L(u)}=3y_{1}+4y_{2}+y_{1}^{-1}y_{2}+4y_{1}^{-1}+y_{1}^{-1}y_{2}^{-1}+0.5y_{2}^{-1}-2.5y_{1}y_{2}^{-1}+y_{1}^{2}y_{2}^{-1}
\end{equation}
which has 8 nondegenerate critical points. Therefore with respect to this bulk deformation the deformed Kodaira-Spencer map is an isomorphism and it gives us a quasi-morphism $\mu_{\mathfrak{b}}$.

\textbf{The case of $X_{10}$.} We consider the toric surface $X_{10}$ corresponding to the following moment polytope.

$l_{1}: 0.5-\alpha_{1}+u_{1} \geq 0;$

$l_{2}: u_{2} \geq 0;$

$l_{3}: 1.5-\alpha_{2}-u_{1}+u_{2} \geq 0;$

$l_{4}: 3-2u_{1}+u_{2} \geq 0;$

$l_{5}: 3.5-\alpha_{3}-u_{1} \geq 0;$

$l_{6}: 4-u_{2} \geq 0;$

$l_{7}: 2.5-\alpha_{4}+u_{1}-u_{2} \geq 0;$

$l_{8}: 1+2u_{1}-u_{2} \geq 0.$

When $\alpha_{i}\rightarrow 0$ we get the toric orbifold $X_{10,0}$ with 4 singularities of $A_{1}$-type. We deform the singularities from toric resolution to the smoothing $\hat{X}_{10}$ such that all the one-point open Gromov-Witten invariants are preserved for toric fibers inside $0\leq u_{1}\leq 3, -1\leq -u_{1}+u_{2}\leq 2$. Then the potential function of a toric fiber $L(u)$ is
\begin{equation}
\begin{split}
&\mathfrak{PO}_{\mathfrak{b}}^{L(u)}(y_{1},y_{2})= (e^{a_{1}}+e^{-a_{1}+a_{2}+a_{8}})y_{1}T^{0.5+u_{1}}+e^{a_{2}}y_{2}T^{u_{2}}\\
&+ (e^{a_{3}}+e^{a_{2}-a_{3}+a_{4}})y_{1}^{-1}y_{2}T^{1.5-u_{1}+u_{2}}+e^{a_{4}}y_{1}^{-2}y_{2}T^{3-2u_{1}+u_{2}}\\
&+ (e^{a_{5}}+e^{a_{4}-a_{5}+a_{6}})y_{1}^{-1}T^{3.5-u_{1}}+e^{a_{6}}y_{2}^{-1}T^{4-u_{2}}\\
&+ (e^{a_{7}}+e^{a_{6}-a_{7}+a_{8}})y_{1}y_{2}^{-1}T^{2.5+u_{1}-u_{2}}+e^{a_{8}}y_{1}^{2}y_{2}^{-1}T^{1+2u_{1}-u_{2}}
\end{split}
\end{equation}
where $\mathfrak{b}=\sum_{i=1}^{8} a_{i}PD([D_{i}])$ is a bulk deformation.

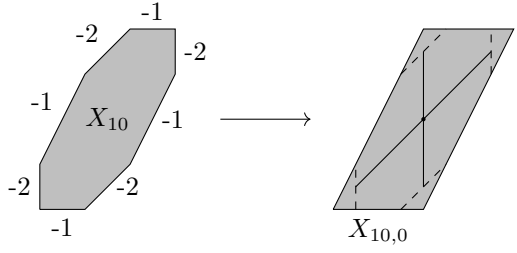
\begin{figure}
  \begin{tikzpicture}[xscale=0.6, yscale=0.6]
  \path [fill=lightgray, lightgray] (0,0)--(1,0)--(2,1)--(3,3)--(3,4)--(2,4)--(1,3)--(0,1);
  \draw (0,0)--(1,0)--(2,1)--(3,3)--(3,4)--(2,4)--(1,3)--(0,1)--(0,0);
  \node at (1.5,2) {$X_{10}$};
  \node [left] at (0,0.5) {-2};
  \node [below] at (0.5,0) {-1};
  \node [right] at (1.5,0.5) {-2};
  \node [right] at (2.5,2) {-1};
  \node [right] at (3,3.5) {-2};
  \node [above] at (2.5,4) {-1};
  \node [above left] at (1.5,3.5) {-2};
  \node [above left] at (0.5,2) {-1};
  \draw [->] (4,2)--(6,2);
  \path [fill=lightgray, lightgray] (6.5,0)--(8.5,0)--(10.5,4)--(8.5,4);
  \draw (6.5,0)--(8.5,0)--(10.5,4)--(8.5,4)--(6.5,0);
  \node [below] at (7.5,0) {$X_{10,0}$};
  \draw [dashed] (8,0)--(9,1);
  \draw [dashed] (7,0)--(7,1);
  \draw [dashed] (10,3)--(10,4);
  \draw [dashed] (8,3)--(9,4);
  \draw (8.5,0.5)--(8.5,3.5);
  \draw (7,0.5)--(10,3.5);
  \filldraw[black] (8.5,2) circle (1pt);
  \end{tikzpicture}
  \caption{Toric degeneration from $X_{10}$ to $X_{10,0}$.}
  \label{fig: Moment polytope 9}
\end{figure}

On the two segments
$$
0.5+u_{1}-u_{2}=0, \quad 0\leq u_{1}\leq 3
$$
and
$$
u_{1}=1.5, \quad 0.5\leq u_{2}\leq 3.5
$$
the Lagrangian fiber $L(u)$ has nontrivial deformed Floer cohomology hence it is nondisplaceable.

The fiber $L(u)=L(1.5, 2)$ at the barycenter is a monotone Lagrangian torus with minimal Maslov index 2. It bounds 12 families of holomorphic disks and its potential function modulo energy parameter is \begin{equation}
\begin{split}
&\mathfrak{PO}_{\mathfrak{b}}^{L(u)}(y_{1},y_{2})=(e^{a_{1}}+e^{-a_{1}+a_{2}+a_{8}})y_{1}+e^{a_{2}}y_{2}\\
&+ (e^{a_{3}}+e^{a_{2}-a_{3}+a_{4}})y_{1}^{-1}y_{2}+e^{a_{4}}y_{1}^{-2}y_{2}\\
&+ (e^{a_{5}}+e^{a_{4}-a_{5}+a_{6}})y_{1}^{-1}+e^{a_{6}}y_{2}^{-1}\\
&+ (e^{a_{7}}+e^{a_{6}-a_{7}+a_{8}})y_{1}y_{2}^{-1}+e^{a_{8}}y_{1}^{2}y_{2}^{-1}.
\end{split}
\end{equation}
When there is no bulk deformation we can check that there exist critical points but not of maximal number. Hence we consider the following bulk deformation
$$
a=(a_{1}, 0, a_{3}, 0, a_{5}, 0, a_{7}, 0)
$$
such that
$$
e^{a_{1}}+e^{-a_{1}}=3, e^{a_{3}}+e^{-a_{3}}=4, e^{a_{5}}+e^{-a_{5}}=5, e^{a_{7}}+e^{-a_{7}}=6
$$
to perturb the potential function. Then the bulk-deformed potential function is
\begin{equation}
\mathfrak{PO}_{\mathfrak{b}}^{L(u)}=3y_{1}+y_{2}+4y_{1}^{-1}y_{2}+y_{1}^{-2}y_{2}+5y_{1}^{-1}+y_{2}^{-1}+6y_{1}y_{2}^{-1}+y_{1}^{2}y_{2}^{-1}
\end{equation}
which has 8 nondegenerate critical points. Therefore with respect to this bulk deformation the deformed Kodaira-Spencer map is an isomorphism and it gives us a quasi-morphism $\mu_{\mathfrak{b}}$.

\bibliographystyle{amsplain}

\end{document}